\documentclass[12pt]{amsart}
\usepackage{amsfonts, amstext, amsmath, amsthm, amscd, amssymb}
\usepackage[hmargin=2.5cm,vmargin=2.5cm]{geometry}
\usepackage[pdftex]{graphicx, color}
\usepackage{soul}

\usepackage[hidelinks,pagebackref]{hyperref}
\usepackage{enumitem}
\usepackage{cite}
\usepackage{setspace}
\usepackage{float}
\usepackage{subcaption}
\usepackage{tikz, tikz-cd}
\usepackage{mathrsfs}
\usepackage{pinlabel}
\usetikzlibrary{positioning}

\usepackage{xcolor}
\definecolor{darkred}{rgb}{0.4, 0.0, 0.0}

%This command stops the Math Review numbers appearing in the references! 
\AtBeginDocument{
   \def\MR#1{}
}

\pagecolor{white}

\allowdisplaybreaks

\newcommand*\unknot{\vcenter{\hbox{\includegraphics[width=2em]{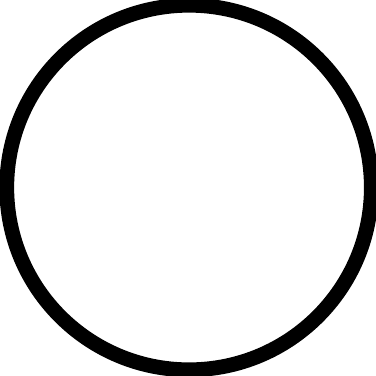}}}}
\newcommand*\astate{\vcenter{\hbox{\includegraphics[width=2em]{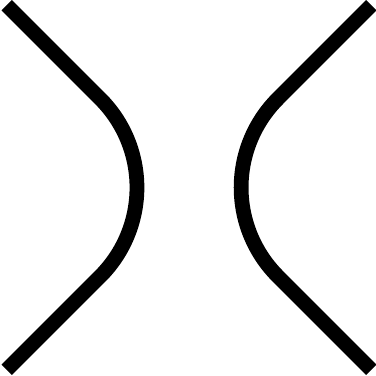}}}}
\newcommand*\bstate{\vcenter{\hbox{\includegraphics[width=2em]{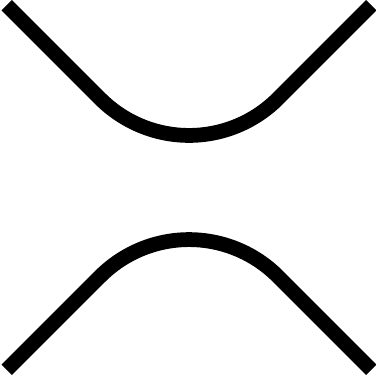}}}}
\newcommand*\pcross{\vcenter{\hbox{\includegraphics[width=2em]{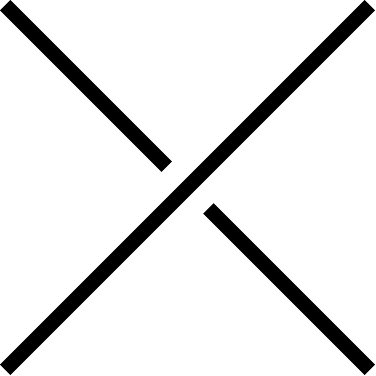}}}}
\newcommand*\dncurl{\vcenter{\hbox{\includegraphics[width=2em]{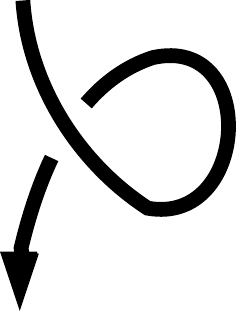}}}}
\newcommand*\dpcurl{\vcenter{\hbox{\includegraphics[width=2em]{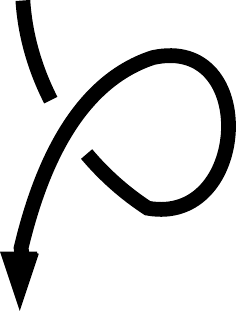}}}}
\newcommand*\dstrand{\vcenter{\hbox{\includegraphics[width=1em]{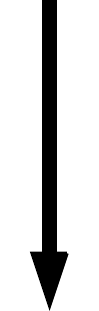}}}}

\newcommand{\Z}{\mathbb{Z}}
\newcommand{\N}{\mathbb{N}}
\newcommand{\R}{\mathbb{R}}

\newcommand{\C}{\mathbb{C}}
\newcommand{\id}{\text{Id}}

\newtheorem{thm}{Theorem}
\numberwithin{thm}{section}

\newtheorem{conj}[thm]{Conjecture}
\newtheorem{prop}[thm]{Proposition}
\newtheorem{lemma}[thm]{Lemma}
\newtheorem{cor}[thm]{Corollary}

\newtheorem*{namedtheorem}{\theoremname}
\newcommand{\theoremname}{testing}
\newenvironment{named_thm}[1]{\renewcommand{\theoremname}{#1}\begin{namedtheorem}}{\end{namedtheorem}}
\newcommand{\refthm}[1]{Theorem~\ref{thm:#1}}
\newenvironment{named_prop}[1]{\renewcommand{\theoremname}{#1}\begin{namedtheorem}}{\end{namedtheorem}}
\newcommand{\refprop}[1]{Proposition~\ref{thm:#1}}

\theoremstyle{definition}
\newtheorem{defn}[thm]{Definition}

\theoremstyle{definition}
\newtheorem*{nameddef}{\defname}
\newcommand{\defname}{testing}
\newenvironment{named_def}[1]{\renewcommand{\defname}{#1}\begin{nameddef}}{\end{nameddef}}
\newcommand{\refdef}[1]{Definition~\ref{def:#1}}

\theoremstyle{definition}

\theoremstyle{remark}
\newtheorem{rmk}[thm]{Remark}

\begin{document}
\title[A Quantum Invariant of Links in $T^2 \times I$ with Volume Conjecture Behavior]{A Quantum Invariant of Links in $T^2 \times I$ \\ with Volume Conjecture Behavior}
\author[Joe Boninger]{Joe Boninger}
\address{Department of Mathematics,  The Graduate Center, City University of New York, New York, NY}
\email{jboninger@gradcenter.cuny.edu}

%% \thanks{\today}

\maketitle

\begin{abstract}
We define a polynomial invariant $J_n^T$ of links in the thickened torus. We call $J^T_n$ the $n$th toroidal colored Jones polynomial, and show it satisfies many properties of the original colored Jones polynomial. Most significantly, $J_n^T$ exhibits volume conjecture behavior.  We prove the volume conjecture for the 2-by-2 square weave, and provide computational evidence for other links.  We also give two equivalent constructions of $J_n^T$, one as a generalized operator invariant we call a pseudo-operator invariant, and another using the Kauffman bracket skein module of the torus. Finally, we show $J^T_n$ produces invariants of biperiodic and virtual links. To our knowledge, $J^T_n$ gives the first example of volume conjecture behavior in a virtual (non-classical) link.
\end{abstract}

\section{Introduction}

A growing body of evidence supports the idea that the asymptotic growth rate of quantum invariants of links and 3-manifolds encodes geometric information.  This hypothesis was initiated by the well-known Volume Conjecture of Kashaev, Murakami and Murakami.
\begin{conj}[\cite{k97, mm01}]
\label{thm:vol_conj}
For a knot $K \subset S^3$, let $J_n(K;e^{2\pi i/n})$ be the $n$th colored Jones polynomial of $K$ evaluated at $e^{2\pi i/n}$. Then
$$
\lim_{n \to \infty} \frac{2\pi}{n} \log | J_n(K;e^\frac{2\pi i}{n})| = {\rm Vol}(S^3 \setminus K).
$$
\end{conj}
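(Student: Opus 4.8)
The plan is to reduce the asymptotic statement to a steepest-descent analysis of an explicit state sum for the \emph{Kashaev invariant} $\langle K\rangle_n := J_n(K; e^{2\pi i/n})$. First I would invoke the identification, due to Murakami--Murakami \cite{mm01}, of $\langle K\rangle_n$ (up to a framing-dependent unit of modulus one) with Kashaev's invariant, which carries a state-sum formula built from quantum dilogarithms: after choosing a suitable diagram — equivalently, an ideal triangulation $\mathcal{T}$ of the complement $S^3\setminus K$ — one has
$$
\langle K\rangle_n \;=\; \sum_{\text{states}}\;\prod_{\Delta\in\mathcal{T}}\Phi_\Delta,
$$
where each $\Phi_\Delta$ is a ratio of $q$-Pochhammer symbols with $q = e^{2\pi i/n}$. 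Writing each summation variable as $z = e^{2\pi i k/n}$ and feeding in the asymptotics of the quantum dilogarithm, the summand becomes $\exp\!\big(\tfrac{n}{2\pi i}\,V(\mathbf{z}) + O(\log n)\big)$ for an explicit \emph{potential function} $V$ assembled from the dilogarithm $\mathrm{Li}_2$.

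The next step is a geometric analysis of $V$. The critical point equations $\partial V/\partial z_j = 0$ should match, after exponentiation, Thurston's gluing equations for $\mathcal{T}$, so that their solutions are points of the gluing variety, the distinguished one $\mathbf{z}^\star$ corresponding to the complete hyperbolic structure on $S^3\setminus K$; at $\mathbf{z}^\star$ the Neumann--Zagier formula gives $V(\mathbf{z}^\star) = i\big(\mathrm{Vol}(S^3\setminus K) + i\,\mathrm{CS}(S^3\setminus K)\big)$. A stationary-phase evaluation along a steepest-descent cycle through $\mathbf{z}^\star$ would then yield
$$
\langle K\rangle_n \;\sim\; c\,n^{3/2}\,\exp\!\left(\frac{n}{2\pi}\Big(\mathrm{Vol}(S^3\setminus K) + i\,\mathrm{CS}(S^3\setminus K)\Big)\right),
$$
and taking $\tfrac{2\pi}{n}\log|\cdot|$ of both sides gives the claimed limit. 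For torus knots, where the volume vanishes, $V$ has no interior geometric critical point and one instead shows directly that the sum grows subexponentially.

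The hard part — and the reason this statement is still labelled a conjecture — is making the steepest-descent step rigorous. One must deform the torus of summation onto a Lefschetz thimble without crossing the branch singularities of $\mathrm{Li}_2$, prove that among all critical points meeting the deformed cycle the \emph{geometric} one has strictly largest real part of $V$, and bound the contributions of the remaining critical points, of the boundary strata of the summation region, and of the $O(\log n)$ error, all uniformly in $n$. These analytic points have been settled only for particular families — the figure-eight knot, various twist and double-twist knots, a handful of links — typically by exploiting special structure of $V$ (an explicit reduction of variables, a convexity estimate, or a hands-on contour). A proof valid for all hyperbolic knots, in particular one ruling out a dominant ``spurious'' critical point, is not available with present techniques, so I would expect any complete argument here to either restrict to a tractable subfamily or to assume input about the Stokes phenomena of these dilogarithmic integrals that has not yet been established.
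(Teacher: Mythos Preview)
The statement you are addressing is labelled \emph{Conjecture} in the paper (Conjecture~\ref{thm:vol_conj}), not a theorem, and the paper offers no proof of it whatsoever; it is quoted from \cite{k97, mm01} as background motivation for the paper's own volume conjecture in $T^2\times I$. There is therefore nothing in the paper to compare your proposal against.

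Your write-up is a fair summary of the standard heuristic route toward the Volume Conjecture --- identify the Kashaev invariant with $J_n$ at the relevant root of unity, express it as a state sum of quantum dilogarithms, extract a potential $V$ whose critical-point equations are Thurston's gluing equations, and attempt a saddle-point analysis --- and you yourself correctly flag that the steepest-descent step has only been justified for special families. That acknowledgment is the honest place to stop: what you have written is not a proof but an outline of a program, and the obstacles you list (contour deformation past the branch locus of $\mathrm{Li}_2$, dominance of the geometric critical point, uniform control of subleading contributions) are precisely the reasons the statement remains open. No argument in the paper fills these gaps, nor does the paper claim to.
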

Here Vol indicates \emph{simplicial volume}, which we define to be the sum of the hyperbolic volumes of the hyperbolic pieces in the Jaco-Shalen-Johansson decomposition of $S^3 \setminus K$ (see \cite{s81}). We say a quantum invariant exhibits \emph{volume conjecture behavior} if theoretical or computational evidence supports a conjectured limit as above.

Conjecture \ref{thm:vol_conj} has been generalized to other 3-manifolds in several ways. In \cite{c07}, Costantino extended the colored Jones polynomial to links in $\#_k(S^2 \times S^1)$ using Turaev's theory of shadows and proved the volume conjecture for an infinite family of hyperbolic links. More recently, Chen and Yang \cite{cy18} discovered volume conjecture behavior exhibited by the Witten-Reshetikhin-Turaev and Turaev-Viro invariants of 3-manifolds, two quantum invariants closely related to the colored Jones polynomial. These conjectures have been verified in many cases \cite{dky18, o18}.

In this paper we define a polynomial invariant $J^T_n$, $n \in \N$, of oriented links in the thickened torus, $T^2 \times I$. We call $J^T_n$ the \emph{$n$th toroidal colored Jones polynomial}, and show it satisfies many properties of the colored Jones polynomial for links in $S^3$. For example, we give one construction of $J^T_n$ using the theory of operator invariants, and another using the Kauffman bracket skein module of $T^2 \times I$. Significantly, $J^T_n$ is the first example of volume conjecture behavior in the Kauffman bracket skein module of a manifold other than $S^3$. 

We state the volume conjecture for $J^T_n$ precisely as follows.
\begin{conj}
\label{thm:conj}
For any link $L \subset T^2 \times I$ such that $(T^2 \times I) \setminus L$ is hyperbolic,
$$
\lim_{n \to \infty} \frac{2\pi}{n} \log | J^T_n(L;e^\frac{2\pi i}{n})| = \rm{Vol}((T^2 \times I) \setminus L).
$$
\end{conj}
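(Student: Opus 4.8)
The plan is to establish Conjecture~\ref{thm:conj} not in full generality but in the cases where it is within reach — foremost the $2\times 2$ square weave $\mathcal{W}\subset T^2\times I$ — by adapting the template of the Kashaev--Murakami--Murakami argument for classical links (and of Costantino's shadow-theoretic proof in the $\#_k(S^2\times S^1)$ setting): compute the invariant as a finite state sum, evaluate at $q=e^{2\pi i/n}$, and identify the exponential growth rate of the dominant term with hyperbolic volume via the dilogarithm. The skein-module model of $J^T_n$ is what makes the first step possible.

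First I would use the Kauffman bracket skein module construction to rewrite $J^T_n(\mathcal{W};q)$ as the evaluation of a closed $n$-colored trivalent ribbon graph in the skein module of $T^2$. Inserting Jones--Wenzl idempotents on the $n$-cabled strands and applying the recoupling calculus crossing by crossing should turn this into a finite sum $\sum_{\mathbf{j}} c_{\mathbf{j}}(q)$ over admissible colorings $\mathbf{j}$ of the internal edges, where each $c_{\mathbf{j}}(q)$ is a product of quantum $6j$-symbols attached to the crossings of the fundamental domain, times theta-symbols and a torus normalization coming from the structure of the skein module of $T^2\times I$, all of them ratios of quantum factorials $\{k\}!=\prod_{i=1}^k(q^{i/2}-q^{-i/2})$. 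Substituting $q=e^{2\pi i/n}$ and using the standard estimate $\tfrac{2\pi}{n}\log|\{k\}!|\to -2\Lambda(\pi t)$ as $k/n\to t$, with $\Lambda$ the Lobachevsky function, the sum takes the shape $J^T_n(\mathcal{W};e^{2\pi i/n})=\sum_{\mathbf{j}}\exp\!\bigl(\tfrac{n}{2\pi}V(\mathbf{j}/n)+o(n)\bigr)$ for an explicit potential $V$ equal to a finite sum of dilogarithm/Lobachevsky terms, one contribution per crossing, so that $\lim_n \tfrac{2\pi}{n}\log|J^T_n(\mathcal{W})|$ equals $\max\operatorname{Re}V$ over the critical points of $V$ in the relevant region.

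The geometric input I would then bring in is the decomposition of $(T^2\times I)\setminus\mathcal{W}$ into regular ideal octahedra, one per crossing, due to Champanerkar--Kofman--Purcell, which gives $\operatorname{Vol}((T^2\times I)\setminus\mathcal{W})=c\,v_{\mathrm{oct}}$ with $v_{\mathrm{oct}}=8\Lambda(\pi/4)$ and $c$ the number of crossings in the fundamental domain. The goal is to show that the critical-point equations for $V$ are equivalent to Thurston's gluing equations for this triangulation, and that the complete hyperbolic structure — in which, by the symmetry of $\mathcal{W}$, every octahedron is forced to be regular — is realized at a critical point where $\operatorname{Re}V = \tfrac{1}{2\pi}\,c\,v_{\mathrm{oct}}$. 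This last identification is the usual one: $\operatorname{Re}$ of the dilogarithm potential at a solution of the gluing equations is the sum of Bloch--Wigner dilogarithms of the shape parameters, which computes the hyperbolic volume of the corresponding (possibly incomplete) structure. The high symmetry of $\mathcal{W}$ should reduce this to a very low-dimensional verification rather than a generic Neumann--Zagier computation.

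The main obstacle is not the formal saddle-point heuristic but the rigorous two-sided bound on $\tfrac{2\pi}{n}\log|J^T_n(\mathcal{W})|$. The lower bound follows once one exhibits a single summand near the geometric critical point of the claimed size; the matching \emph{upper} bound requires ruling out larger exponential contributions from other critical points of $V$ and from the boundary strata of the polytope of admissible colorings. I expect this to be manageable precisely because $\mathcal{W}$ is alternating: its state sum should inherit enough positivity and symmetry that the dominant coloring can be pinned down by an explicit term-by-term estimate, as in the classical figure-eight and Whitehead-link arguments, avoiding a full multivariable stationary-phase theorem. Finally, the same three steps — skein state sum, root-of-unity/dilogarithm asymptotics, identification of the dominant critical point with a hyperbolic structure on an ideal triangulation of $(T^2\times I)\setminus L$ — are the natural route toward Conjecture~\ref{thm:conj} for any $L$ whose complement admits a sufficiently symmetric geometric triangulation; the square weave is simply the case in which every ingredient is simultaneously under control.
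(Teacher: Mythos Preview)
The statement is a conjecture, and the paper does not prove it in general; it proves only the case of the $2\times 2$ square weave $W$ (Theorem~\ref{thm:main_one}). Your plan is aimed at that same case, so the comparison should be made there.

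Your route and the paper's diverge substantially. You propose a skein/recoupling reformulation into a sum of quantum $6j$-symbols, then a saddle-point analysis of a dilogarithm potential whose critical-point equations are to be matched with Thurston's gluing equations for the octahedral decomposition. The paper does none of this. It works directly with the $R$-matrix state sum (Definition~\ref{def:simple_cjp}, equations (\ref{eq:r})--(\ref{eq:r_inv})), obtains the closed formula (\ref{eq:formula}) for $J^T_n(W;e^{2\pi i/n})$, and then argues the two bounds separately and elementarily. The \emph{upper} bound is immediate: Garoufalidis--L\^e's entrywise estimate on the $R$-matrix gives $\limsup \tfrac{2\pi}{n}\log|J^T_n| \le c\cdot v_{\text{oct}}$ for any diagram with $c$ crossings (Proposition~\ref{thm:gl}), and the special feature of $W$ is that $\mathrm{Vol}=4v_{\text{oct}}$ exactly saturates this with $c=4$, so no saddle-point or gluing-equation analysis is needed. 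The \emph{lower} bound is obtained by an algebraic manipulation showing $J^T_n(W;e^{2\pi i/n})$ is real and equal to a sum (\ref{eq:nu_formula}) of \emph{nonnegative} terms; one then bounds below by the single term at $m=\lfloor n/2\rfloor$, $a=b=d=\lfloor n/4\rfloor$ and reads off $4v_{\text{oct}}$ from the Lobachevsky asymptotics of $\{k\}!$. This is closer in spirit to the figure-eight computation than to any stationary-phase argument.

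Your plan is not wrong as a template, but it is overbuilt for $W$ and leaves the genuinely hard step --- the upper bound --- essentially unaddressed: ``should inherit enough positivity and symmetry'' is exactly the point that needs an argument, and you have not said what that argument is. The paper sidesteps this entirely via the crossing-number bound, which is the idea you are missing. A secondary caution: if you go through the skein module of $T^2$, remember that the toroidal invariant is the composition of the multibracket with the evaluation $p_2$ sending essential curves to $2$ (Theorem~\ref{thm:skein_op}); this is not the standard closed-surface recoupling calculus, and you would need to check that your $6j$ expansion survives that evaluation intact.
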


Here the simplicial volume Vol is simply the hyperbolic volume of $(T^2 \times I) \setminus L$. We prove Conjecture \ref{thm:conj} for the $2$-by-$2$ square weave $W \subset T^2 \times I$ shown in Figure~\ref{fig:square_weave}.
\begin{named_thm}{\refthm{main_one}}
$$
\lim_{n \to \infty} \frac{2\pi}{n} \log | J^T_n(W;e^\frac{2\pi i}{n})| = 4 v_\text{oct} = \rm{Vol}((T^2 \times I) \setminus W)
$$
where $v_\text{oct} \approx 3.6638$ is the volume of the regular ideal hyperbolic octahedron.
\end{named_thm}

\begin{figure}[H]
\centering
\includegraphics[height=4cm]{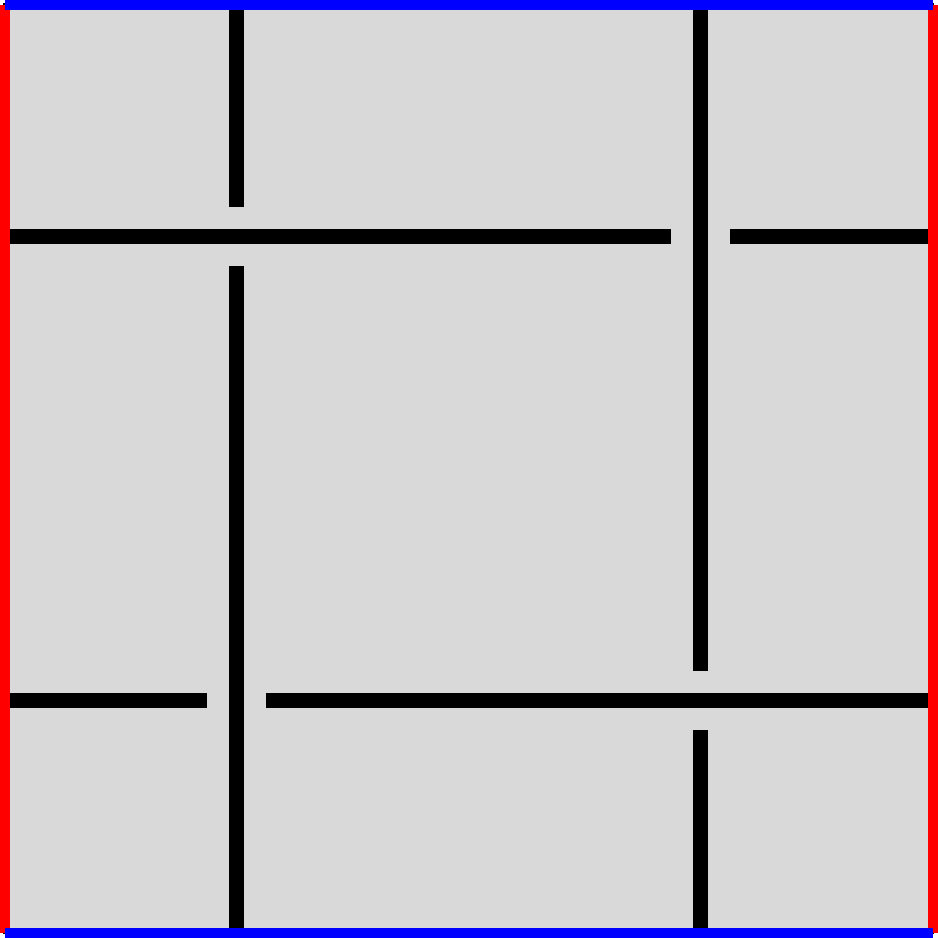}
\caption{The $2$-by-$2$ square weave (opposite sides of the diagram are identified)}
\label{fig:square_weave}
\end{figure}

In addition to Theorem \ref{thm:main_one}, the computations in Table \ref{tab:table} support our volume conjecture. Each row gives the normalized log of the modulus of the toroidal colored Jones polynomial of a certain link, at the relevant root of unity, for different values of $n$. The first four rows are genus one virtual knots in Green's table \cite{g04}---each of these corresponds to a knot in $T^2 \times I$ \cite{k03} with volume computed in \cite{a19}. In the fifth and sixth rows, $B$ and $\ell$ refer respectively to the virtual 2-braid and triaxial weave shown in Figure \ref{fig:links}. (The geometry of $\ell$ is discussed in \cite{ckp19}.)  Finally, $v_\text{tet} \approx 1.0149$ is the volume of the regular ideal hyperbolic tetrahedron.
\begin{table}[H]
\centering\renewcommand\arraystretch{1.2}
\begin{tabular}{|c|cccccc|c|}
\hline
&  \multicolumn{6}{c|}{$(2\pi/n) \cdot \log | J^T_n(L;e^{2\pi i/n})|$ at $n = $} & \\
\hline
Link & 10 & 20 & 30 & 50 & 75 & 100 & Vol \\
\hline
2.1 & 5.4685 & 5.5004 & 5.4843 & 5.4548 & 5.4309 & 5.4215 & 5.3335 \\
3.2 & 7.5047 & 7.6976 & 7.7393 & 7.7566 & 7.7564 & 7.7528 & 7.7069 \\
3.5 & 5.9817 & 6.2649 & 6.3345 & 6.3733 & 6.3836 & 6.3852 & 6.3545 \\
3.7 & 9.0885 & 9.3732 & 9.4523 & 9.5017 & 9.5182 & 9.5231 & 9.5034 \\
$B$ & 7.1834 & 7.3637 & 7.3903 & 7.3953 & 7.3891 & 7.3825 & $2v_\text{oct} \approx 7.3278$ \\
$\ell$ & 9.5569 & 9.9321 & 10.0405 & 10.1130 & 10.1411 & 10.1519 & $10v_\text{tet} \approx 10.149$ \\
\hline
\end{tabular}
\caption{Computational evidence for Conjecture \protect\ref{thm:conj}}
\label{tab:table}
\end{table}

\begin{figure}
\centering
\subcaptionbox{The virtual 2-braid $B$ \label{fig:braid}}{
\includegraphics[height=4cm]{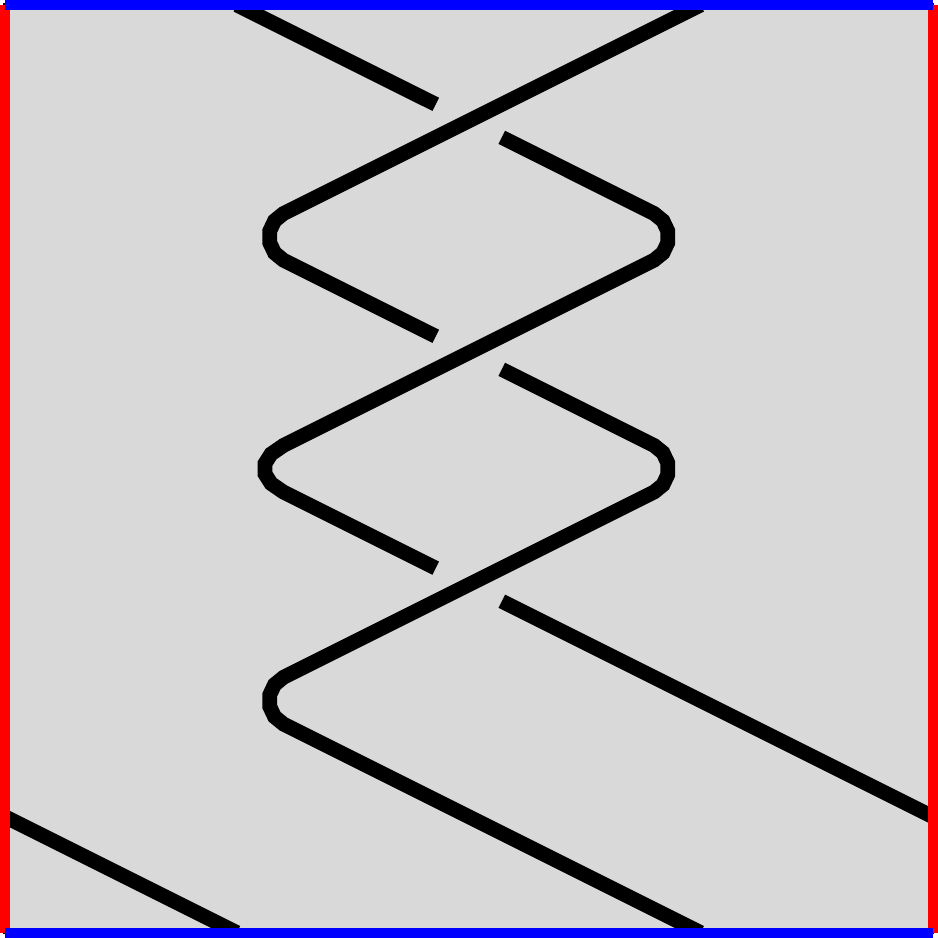}
}
\hspace{1cm}
\subcaptionbox{The triaxial weave $\ell$ \label{fig:weave}}{
\includegraphics[height=4cm]{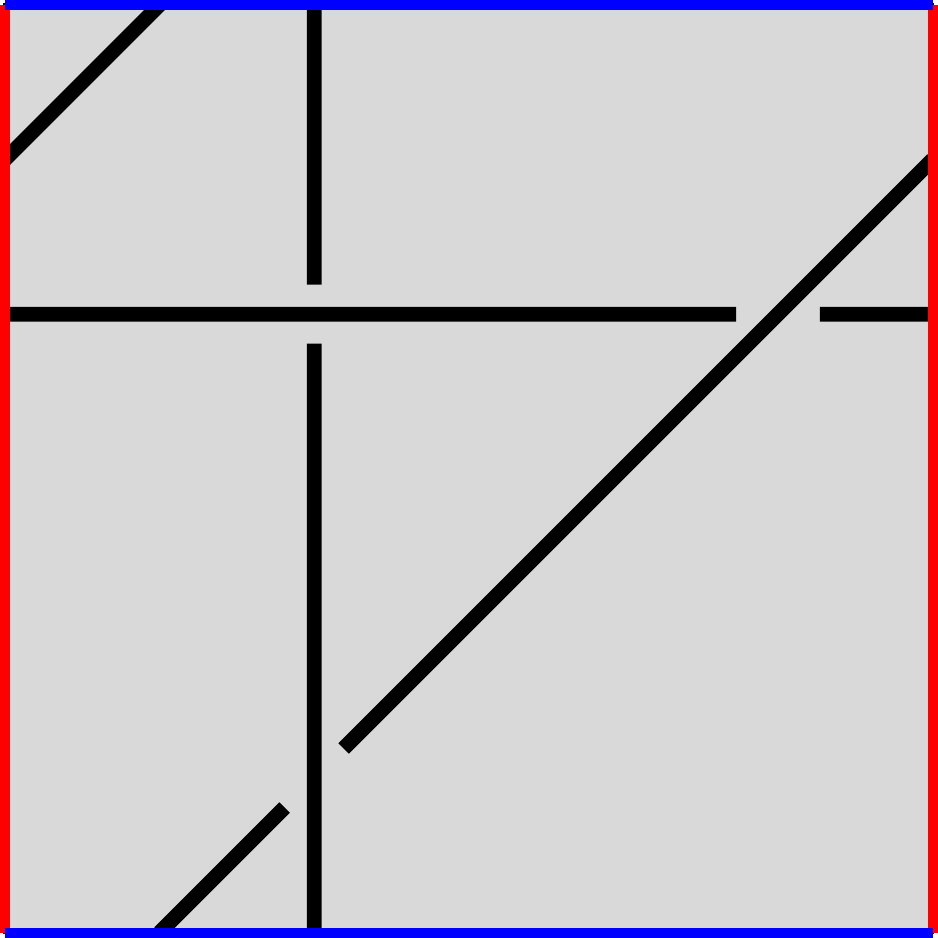}
}
\caption{}
\label{fig:links}
\end{figure}

\subsection*{Pseudo-Operator Invariant}

Volume conjecture behavior is not the only interesting feature of $J^T_n$. Like the original colored Jones polynomial, $J^T_n$ is defined using the theory of operator invariants and the quantum group $\mathscr{A} = U_q(sl(2, \C))$, the quantized universal enveloping algebra of $sl(2, \C)$ specialized to a root of unity $q$. Briefly, given a link $L \subset T^2 \times I$ with diagram $D \subset T^2$, we use the flat geometry of $T^2$ to label certain points of $D$ as critical points. We then assign $\mathscr{A}$-linear operators to each critical point of $D$ and use these local assignments to compute $J_n^T$ as a state sum. This is similar to the construction of the colored Jones polynomial of links in $S^3$, with a key conceptual difference: in $S^3$, the local assignments of $\mathscr{A}$-linear operators to critical points (crossings and local extrema) extend to a global assignment of a single $\mathscr{A}$-linear operator to the entire link. With $J_n^T$ no global assignment is possible, and for this reason we refer to $J^T_n$ as a \emph{pseudo-operator invariant}. The theory of pseudo-operator invariants, which generalizes the theory of operator invariants, may have applications beyond the invariant $J^T_n$. In Section 3, we develop this theory in detail and in the process construct another invariant $\hat{J}^T_{{\bf n}, q}$ of framed, unoriented links in $T^2 \times I$. The invariant $\hat{J}^T_{{\bf n}, q}$ is analogous to the invariant $J_{L, {\bf n}}$ of \cite{km91}, where ${\bf n} = (n_1, \dots, n_k)$ is a multi-integer indicating an integer $n_i$ assigned to each component of $L$.

\subsection*{Skein Module Invariant}

We also consider an $SU(2)$ toroidal colored Jones polynomial obtained by specializing to the quantum group $SU(2)_q$. We show that if $C \subset T^2$ is a contractible, simple closed curve, the level two $SU(2)$ invariant $\hat{J}^T_{{\bf 2}, q} = \hat{J}^T_{(2, \dots, 2),q}$ satisfies
$$
\hat{J}^T_{{\bf 2},q}(C) = -q^{1/2} - q^{-1/2}.
$$
If $C \subset T^2$ is a simple closed curve which is not contractible,
$$
\hat{J}^T_{{\bf 2},q}(C) = 2.
$$
Additionally, we prove $\hat{J}^T_{{\bf 2},q}$ satisfies the Kauffman bracket skein relation. These observations motivate the following definition and theorem, which characterize $\hat{J}^T_{{\bf 2}, q}$ skein-theoretically.

\pagebreak

\begin{named_def}{\refdef{kauffman}}
Define a Kauffman-type bracket $\langle * \rangle_\tau \in \Z[A^{\pm 1}, z]$ on link diagrams in $T^2$ (and framed links in $T^2 \times I$) by the relations
\begin{enumerate}[label=(\alph*)]
\item $\langle \varnothing \rangle_\tau= 1$.
\item Let $C \subset T^2$ be a simple closed curve disjoint from a diagram $D \subset T^2$. 
\begin{enumerate}[label=(\roman*)]
\item If $C$ is contractible, $\langle C \sqcup D \rangle_\tau = (-A^2 - A^{-2}) \langle D \rangle_\tau$.
\item If $C$ is not contractible, $\langle C \sqcup D \rangle_\tau = z \langle D \rangle_\tau$.
\end{enumerate}
\item $\langle \pcross \rangle_\tau = A \langle \astate \rangle_\tau + A^{-1} \langle \bstate \rangle_\tau$.
\end{enumerate}
\end{named_def}

Here $A$ and $z$ are indeterminates.

\begin{named_thm}{\refthm{kauffman}}
For any framed link $L \subset T^2 \times I$,
$$
\hat{J}^T_{{\bf 2},q}(L) = \langle L \rangle_\tau |_{A^4 = q, z = 2}.
$$
As a corollary, for any oriented, unframed link $L \subset T^2 \times I$ with diagram $D \subset T^2$,
$$
J^T_2(L;q) =  [-A^{-3 w(D)} \langle D \rangle_\tau]|_{A^4 = q, z = 2}.
$$
\end{named_thm}
This gives a skein-theoretic construction of the toroidal Jones polynomial generalizing that of the usual Jones polynomial. In fact, our Theorem \ref{thm:skein_op} and Corollary \ref{thm:skein_cjp} prove much stronger statements defining $\hat{J}^T_{{\bf n}, q}$ and $J^T_n$ skein-theoretically for all ${\bf n}$ and $n$; to accomplish this we use the Kauffman bracket skein module of the thickened torus.

\subsection*{Why $z = 2$?}

Relations (a), (b), and (c) in Definition \ref{def:kauffman} are identical to the relations defining the standard Kauffman bracket \cite{k87}, with the additional stipulation in (b)(ii) that essential, simple closed curves can be removed from a diagram by multiplying by $z$. (A somewhat similar bracket is defined in \cite{k11}.) To obtain Theorem \ref{thm:kauffman}, and for a geometrically motivated theory, it is necessary to fix $z = 2$. Indeed, only when $z = 2$ do we obtain an $R$-matrix, allowing us to do calculations as in Table \ref{tab:table}. Proposition \ref{thm:why_two} below shows \emph{any} pseudo-operator invariant takes the value $2$ on essential, simple closed curves in $T^2$, if those curves have been colored by a $2$-dimensional representation of a quantum group. In Appendix A we examine this property further using rotation number and Lin and Wang's definition of the usual Jones polynomial \cite{lw01}---see Proposition \ref{thm:torus_knots} and the following discussion.

\subsection*{Comparison with $J_n$}

For any link $L$  in $T^2 \times I$, there exists a link $\hat{L} \subset S^3$ such that $(T^2 \times I) \setminus L$ and $S^3 \setminus \hat{L}$ are homeomorphic: $\hat{L}$ has a Hopf sublink $H$ whose components are the cores of the tori which make up $S^3 \setminus (T^2 \times I)$ (see Figure \ref{fig:ts_links}). We show $J^T_n(L)$ and $J_n(\hat{L})$ are fundamentally distinct invariants.

A key difference between the two is that $J^T_n$ is unchanged by orientation-preserving homeomorphisms of the torus:
\begin{named_prop}{\refprop{follow_up}}
If the link diagram $D' \subset T^2$ is obtained from a diagram $D \subset T^2$ by an orientation-preserving homeomorphism of $T^2$, then for the corresponding links $L', L \subset T^2 \times I$, $J^T_n(L';q) = J^T_n(L;q)$ for all $n$.
\end{named_prop}
Using this proposition, we can construct infinite families of non-isotopic links in $T^2 \times I$ with identical toroidal colored Jones polynomials, whose corresponding links in $S^3$ all have distinct colored Jones polynomials. See Figure \ref{fig:ts_links} for a simple example, where the links on the left in $T^2 \times I$ have the same toroidal colored Jones polynomials, but the corresponding links on the right in $S^3$ have different colored Jones polynomials.

While this makes $J^T_n$ a less sensitive invariant than $J_n$, it also makes $J^T_n$ applicable in a wider range of contexts. In Section 8, for example, we show $J^T_n$ gives invariants of virtual links and biperiodic links. To our knowledge, $J^T_n$ is the first invariant of virtual links to exhibit volume conjecture behavior in a non-classical setting.

\begin{figure}
\centering
\includegraphics[height=4.5cm]{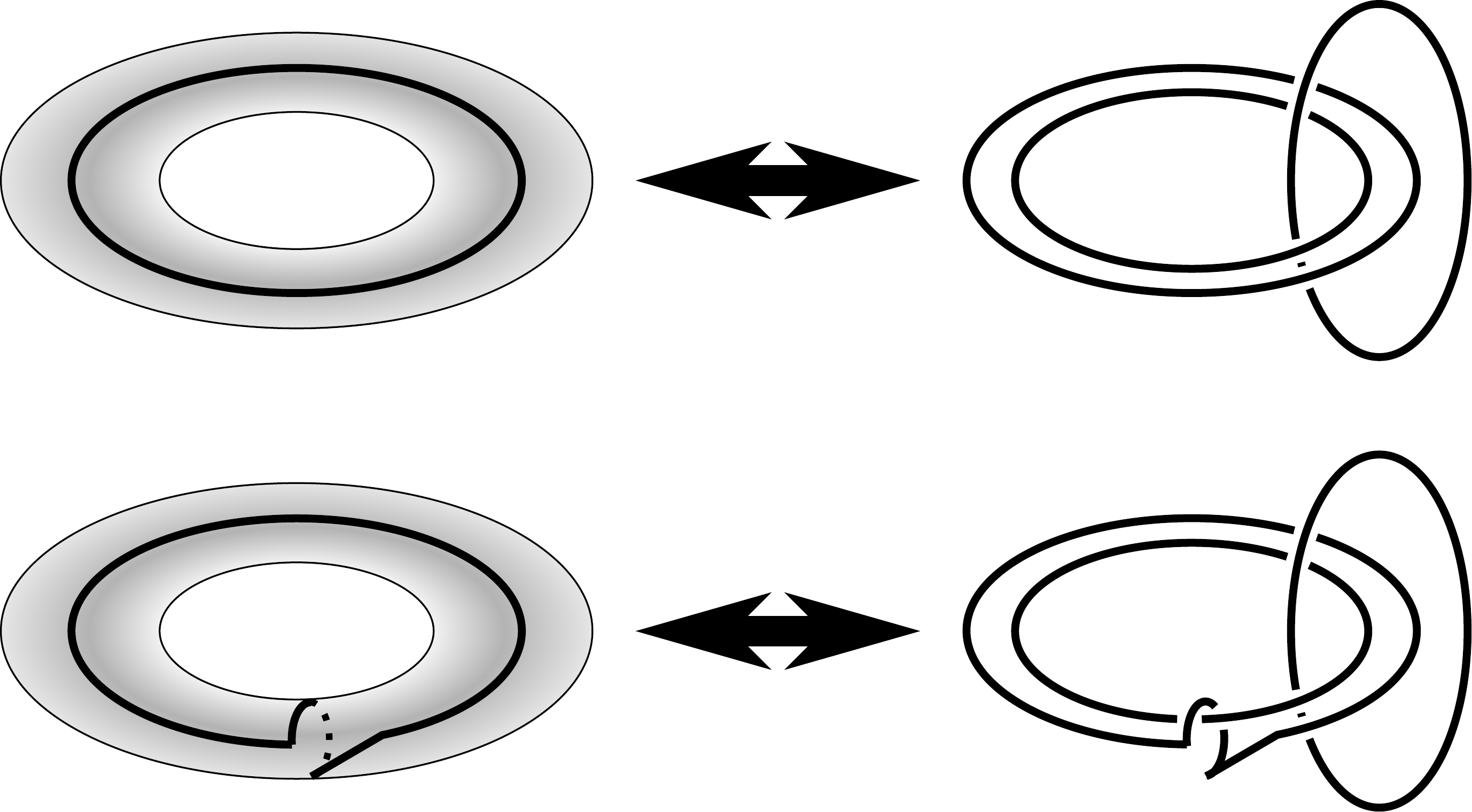}
\caption{Non-isotopic links with homeomorphic complements in $T^2 \times I$ and $S^3$}
\label{fig:ts_links}
\end{figure}

Finally, while $J^T_n(L;q)$ and $J_n(\hat{L};q)$ are different invariants, there is an important special case when the toroidal colored Jones polynomial and usual colored Jones polynomial completely determine each other (see Figure \ref{fig:t_to_t}):

\begin{named_thm}{\refthm{relationship}}
Let $L'$ be a link in $S^3$, and consider an inclusion of $L'$ in an embedded $2$-sphere in $T^2 \times I$. Let $K \subset T^2 \times I$ be a knot projecting to an essential, simple closed curve in $T^2 \times \{0\}$, and let $L$ be a connect sum $L = L' \# K$. Then
$$
J^T_n(L;q) = n \cdot J_n(L';q)
$$
for all $n$.
\end{named_thm}
An immediate corollary of Theorem \ref{thm:relationship} is that, for $L$ and $L'$ as in the theorem,
$$
\lim_{n \to \infty} \frac{2\pi}{n} \log | J^T_n(L;e^\frac{2\pi i}{n})| = \lim_{n \to \infty} \frac{2\pi}{n} \log | J_n(L';e^\frac{2\pi i}{n})|.
$$
In Section 7, we use this fact to prove that a suitable generalization of our Volume Conjecture \ref{thm:conj} implies the original Volume Conjecture \ref{thm:vol_conj}---see Conjecture \ref{thm:broad_conj} and Corollary \ref{thm:cor_imp} below. It is not clear whether the reverse implication is true.

\subsection*{Outline}

The outline of this paper is as follows: in Section 2, we review Kauffman bracket skein modules and operator invariants. In Section 3 we define a general pseudo-operator invariant $\Phi$ of framed links in $T^2 \times I$, and in Section 4 we specialize $\Phi$ to $U_q(sl(2,\C))$ to obtain $J^T_n$ and $\hat{J}^T_{{\bf n}, q}$. In Section 5 we define these invariants skein-theoretically. In Section 6 we prove Theorem \ref{thm:main_one}, and in Section 7 we discuss generalizations of our Volume Conjecture \ref{thm:conj}. In particular, we consider the case of non-hyperbolic links in $T^2 \times I$ and show that a generalization of Conjecture \ref{thm:conj} implies the original Volume Conjecture. In Section 8 we discuss $J^T_n$ as an invariant of biperiodic and virtual links. Finally, in Appendix A, we study the behavior of $J^T_2$ through the lens of Lin and Wang's formulation of the Jones polynomial \cite{lw01}.

\subsection*{Acknowledgements}
We thank Ilya Kofman for his help and guidance with this project, and Hitoshi Murakami, Adam Sikora, and Abhijit Champanerkar for helpful comments.

\section{Background}

\subsection{Kauffman Bracket Skein Modules}
For a $3$-manifold $M$ and indeterminate $A$, let $\mathscr{L}(M)$ be the free $\Z[A^{\pm 1}]$-module generated by regular isotopy classes of framed links in $M$. The \emph{Kauffman bracket skein module} of $M$ \cite{t88, p91}, $\mathscr{S}(M)$, is the quotient of $\mathscr{L}(M)$ by the submodule generated by the following two relations:
\begin{enumerate}[label=(\roman*)]
\item $\unknot \sqcup L = (-A^2 - A^{-2}) L$.
\item $\pcross = A \astate + A^{-1} \bstate$.
\end{enumerate}
The links in each expression above are identical except in a ball where they look as shown, and all diagrams are assumed to have blackboard framing. Each link $L \subset M$ is represented in $\mathscr{S}(M)$ by $\langle L \rangle$, called the \emph{Kauffman bracket} of $L$. If $M = \Sigma \times I$, $\Sigma$ an orientable surface, we also denote the skein module of $M$ by $\mathscr{S}(\Sigma)$. In this case, gluing two copies of $\Sigma \times I$ together along a boundary component gives $\mathscr{S}(\Sigma)$ the structure of a $\Z[A^{\pm 1}]$-algebra.

As an algebra, the skein module $\mathscr{S}(\mathcal{A})$ of the thickened annulus $\mathcal{A} \times I$ is generated by a copy of its core with framing parallel to $\mathcal{A} \times \{0\}$. Sending this core to $z$ gives an algebra isomorphism $\mathscr{S}(\mathcal{A}) \cong \Z[A^{\pm 1}][z]$, so that the set $\{1, z, z^2, z^3, \dots\}$ is a basis of $\mathscr{S}(\mathcal{A})$ as a $\Z[A^{\pm 1}]$-module. An alternate basis for $\mathscr{S}(\mathcal{A})$ is given by the Chebyshev polynomials $S_j(z)$, $j \geq 0$, defined recursively by
\begin{align}
\label{eq:chebyshev}
S_0(z) = 1, & & S_1(z) = z, & & S_{j + 1}(z) = zS_j - S_{j - 1}.
\end{align}

If $L$ is a link in $M$ with $k$ components, we can construct a multilinear map
\begin{equation}
\label{eq:multi-bracket}
\langle \cdots \cdots \rangle_L : \mathscr{S}(\mathcal{A})^{\otimes k} \to \mathscr{S}(M)
\end{equation}
called the \emph{Kauffman multi-bracket}, as follows. For $z^{i_j} \in \Z[A^{\pm 1}][z] \cong \mathscr{S}(\mathcal{A})$, $i_j \geq 0$, let $L^{i_1, \dots, i_k}$ be the framed link in $M$ obtained by cabling the $j$th component of $L$ by $i_j$ parallel copies of itself. Define
$$
\langle z^{i_1}, \dots, z^{i_k}\rangle_L = \langle L^{i_1, \dots, i_k} \rangle
$$
and extend $\Z[A^{\pm 1}]$-multilinearly to all of $\mathscr{S}(\mathcal{A})$. 

Sending the empty link to $1$ gives an isomorphism from $\mathscr{S}(S^3)$ to $\Z[A^{\pm 1}]$. Thus, for a link $L \in S^3$ with $k$ components, the Kauffman multi-bracket is a map
$$
\langle \cdots \cdots \rangle_L : \mathscr{S}(\mathcal{A})^{\otimes k} \to  \Z[A^{\pm 1}].
$$
Let $L$ be an oriented, unframed link in $S^3$ with $k$ components and $D$ a diagram for $L$ with writhe $w(D)$. The \emph{$n$th colored Jones polynomial} of $L$, $J_n(L;q)$, is defined by
\begin{equation}
\label{eq:skein_jones}
J_n(L;q) = \Big[ \frac{((-1)^{n - 1} A^{n^2 - 1})^{-w(D)}}{-A^2 - A^{-2}} \langle S_{n - 1}(z), \dots, S_{n - 1}(z) \rangle_D \Big]\Big|_{A^4 = q}.
\end{equation}

In Section 5 we study the Kauffman bracket skein module $\mathscr{S}(T^2)$ of the thickened torus $T^2 \times I$ and its associated Kauffman multi-bracket. $\mathscr{S}(T^2)$ is generated as an algebra by isotopy classes of simple closed curves in $T^2$, which are in bijection with the set of tuples $(a,b) \in \Z^2$ such that either $a = b = 0$, or $a$ and $b$ are coprime, modulo the relation $(a, b) \sim (-a, -b)$. We think of $(a,b)$ as the curve homotopic to $a$ times a meridian plus $b$ times a longitude, and write $(a,b)^m$ to indicate $m$ parallel copies of such a curve. Additionally, to avoid ambiguity, we denote the image of a link $L \subset T^2 \times I$ in $\mathscr{S}(T^2)$ by $\langle L \rangle_T$ and use $\langle \dots \rangle_{T, L}$ to mean the multi-bracket map determined by $L$,
\begin{equation}
\label{eq:torus_multibracket}
\langle \dots \dots \rangle_{T, L} : \mathscr{S}(\mathcal{A})^{\otimes k} \to \mathscr{S}(T^2).
\end{equation}

\subsection{Tangle Operators}

An alternate definition of the colored Jones polynomial comes from the theory of \emph{tangle operators}. The exposition here follows \cite[Sec.~3]{km91}.

Recall that a \emph{tangle} $T$ is a $1$-manifold properly embedded (up to isotopy) in the unit cube $I^3 \subset \R^3$ with $\partial T \subset \{\frac{1}{2}\} \times I \times \partial I$, and define $\partial_- T = T \cap (I^2 \times \{0\})$ and $\partial_+ T = T \cap (I^2 \times \{1\})$. Choosing a regular projection onto $\{0\} \times I^2$ gives a \emph{tangle diagram} of $T$.

For two tangles $S$ and $T$, denote by $S \otimes T$ the tangle formed by placing $S$ and $T$ side by side so the boundary $I \times \{1\} \times I$ of $S$ equals the boundary $I \times \{0\} \times I$ of $T$. Similarly, by $S \circ T$ we mean the tangle formed by stacking $S$ and $T$ vertically so $\partial_+T = \partial_-S$; this operation can be performed only if $|\partial_+T| = |\partial_-S|$. With these operations, the set of all tangle diagrams is generated by the five {\it elementary diagrams} $I$, $R$, $L$, $\cap$ (called a \emph{cap}), and $\cup$ (called a \emph{cup}) shown in Figure \ref{fig:el_di}. Below, we assume tangles are equipped with orientations and framings.

\begin{figure}[H]
\labellist
\small\hair 2pt
\pinlabel $I$ at 45 20 
\pinlabel $R$ at 165 20
\pinlabel $L$ at 285 20
\pinlabel $\cap$ at 407 20
\pinlabel $\cup$ at 527 20
\endlabellist
\centering
\includegraphics[height=1.75cm]{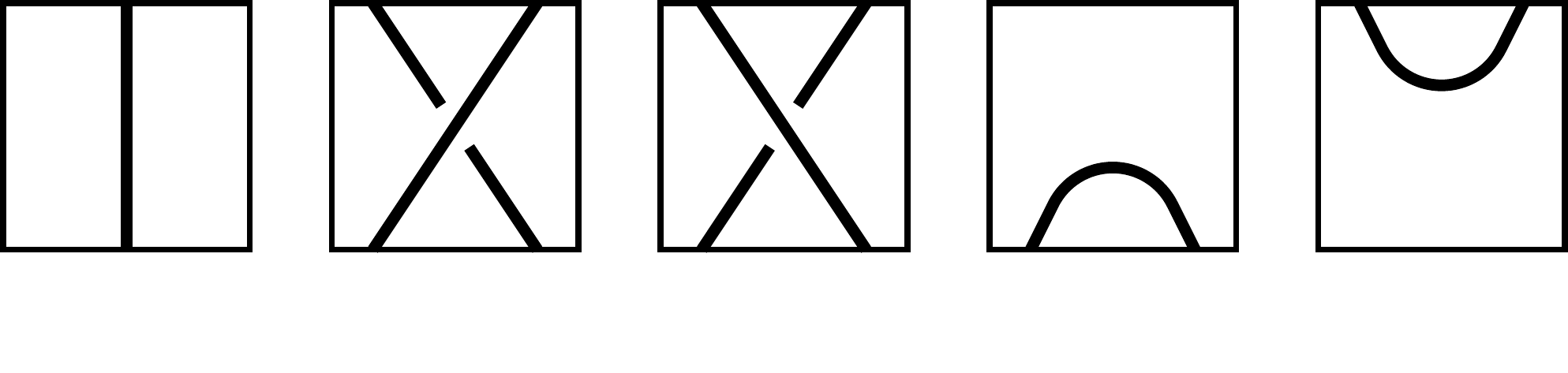}
\caption{Elementary diagrams}
\label{fig:el_di}
\end{figure}

Fix a quasitriangular Hopf algebra $(\mathscr{A}, \breve{R})$ with $R$-matrix $\breve{R} = \sum \alpha_i \otimes \beta_i \in \mathscr{A} \otimes \mathscr{A}$ and define a \emph{$V$-coloring} of a tangle $T$ (or one of its diagrams) to be an assignment of an $\mathscr{A}$-module to each component of $T$. This induces a coloring of $\partial T$ as follows: If $C$ is a component of color $V$, we assign $V$ to each endpoint of $C$ where $C$ is oriented downward and the dual module $V^*$ to each endpoint where $C$ is oriented upward. Tensoring from left to right gives {\it boundary $\mathscr{A}$-modules} $T_\pm$ assigned to $\partial_\pm T$ with the empty tensor product defined to be $\C$.

Suppose $\mathscr{A}$ contains a unit $\mu$ with the following properties:
\begin{enumerate}[label=(\roman*)]
\item $\mu \alpha \mu^{-1} = S^2(\alpha)$ for all $\alpha \in \mathscr{A}$, where $S$ is the antipode of $\mathscr{A}$.
\item $\sum \alpha_i \mu^{-1} \beta_i = \sum \beta_i \mu \alpha_i$.
\end{enumerate}
We call such a unit a \emph{good unit} of $\mathscr{A}$. In this case, by the following fundamental result, any tangle $T$ gives an $\mathscr{A}$-linear map $T_- \to T_+$.

\begin{thm}[\protect\cite{km91, rt90}]
\label{thm:fund_tangles}
There exist unique $\mathscr{A}$-linear operators $\mathcal{F}_T = \mathcal{F}_T^{\mathscr{A}, \breve{R}, \mu} : T_- \to T_+$ assigned to each colored framed tangle $T$ which satisfy $\mathcal{F}_{T \circ T'} = \mathcal{F}_T \circ \mathcal{F}_{T'}$, $\mathcal{F}_{T \otimes T'} = \mathcal{F}_T \otimes \mathcal{F}_{T'}$, and for the tangles given by the elementary diagrams with blackboard framing,
\begin{align*}
\mathcal{F}_I &= \id \\
\mathcal{F}_R &= R \text{ and } \mathcal{F}_L = R^{-1} \\
\mathcal{F}_{\protect\rotatebox[origin=c]{0}{$\curvearrowright$}} &= E \text{ and } \mathcal{F}_{\protect\rotatebox[origin=c]{0}{$\curvearrowleft$}} = E_\mu \\
\mathcal{F}_{\protect\rotatebox[origin=c]{180}{$\curvearrowleft$}} &= N \text{ and } \mathcal{F}_{\protect\rotatebox[origin=c]{180}{$\curvearrowright$}} = N_{\mu^{-1}} 
\end{align*}
where $R = \tau \circ \breve{R}$, $\tau$ the transposition map $\alpha \otimes \beta \mapsto \beta \otimes \alpha$. Additionally $E(f \otimes x) = f(x)$, $E_\mu(x \otimes f) = f(\mu x)$, $N(1) = \sum e_i \otimes e^i$ and $N_{\mu^{-1}}(1) = \sum e^i \otimes (\mu^{-1} e_i)$ for any basis $e_i$.
\end{thm}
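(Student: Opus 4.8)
The plan is to prove existence and well-definedness by a Morse-theoretic slicing argument, after which uniqueness follows formally. \emph{Existence.} Given a colored framed tangle $T$, choose a diagram $D$ and, after a generic perturbation, arrange that the height function on $D$ is Morse with distinct critical values. Horizontal cuts just above and below each critical value decompose $D$, up to planar isotopy, into finitely many strips, each a tensor product (read left to right) of vertical strands with exactly one elementary diagram: a crossing, a cap, or a cup. Reading the strips from bottom to top and composing the prescribed operators — $\id$ for a strand, $R$ or $R^{-1}$ for a crossing, $N$ or $N_{\mu^{-1}}$ for a cup and $E$ or $E_\mu$ for a cap (which of the two dictated by the orientation of the strand at the extremum), each tensored with identities — produces a linear map $\mathcal{F}_D : T_- \to T_+$, where $T_\pm$ are the boundary $\mathscr{A}$-modules determined by $T$ and its coloring. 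With this definition $\mathcal{F}_{T \circ T'} = \mathcal{F}_T \circ \mathcal{F}_{T'}$, $\mathcal{F}_{T \otimes T'} = \mathcal{F}_T \otimes \mathcal{F}_{T'}$, and the stated elementary values hold by construction, so the real content is that $\mathcal{F}_D$ depends only on the regular isotopy class of the framed tangle, i.e. not on $D$ or the chosen slicing.

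Before checking well-definedness I would record that each building block is a morphism of $\mathscr{A}$-modules, so every composite and tensor of them is too. The operator $R = \tau \circ \breve{R}$ intertwines the two module structures on $V \otimes W$ because quasitriangularity gives $\breve{R}\,\Delta(a) = \Delta^{\mathrm{op}}(a)\,\breve{R}$; the coevaluation $N$ and evaluation $E$ are $\mathscr{A}$-linear once $V^*$ is given the antipode-twisted action $(a \cdot f)(x) = f(S(a)x)$; and the twisted maps $E_\mu$, $N_{\mu^{-1}}$ are $\mathscr{A}$-linear precisely because of good-unit condition (i), $\mu a \mu^{-1} = S^2(a)$, which is exactly what makes the canonical vector-space isomorphism $V \cong V^{**}$ — a module map only after correcting the $S^2$-twist by $\mu^{\pm 1}$ — into an intertwiner.

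For well-definedness I would invoke a Reidemeister-type presentation of sliced tangle diagrams: two of them present the same colored framed oriented tangle if and only if they are related by a finite sequence of local moves, namely (a) interchanging the heights of two elementary pieces lying in disjoint vertical bands; (b) the zig-zag (snake) straightening moves; (c) Reidemeister~II in each compatible orientation; (d) Reidemeister~III; and (e) the moves sliding a crossing past an adjacent cup or cap and rotating a crossing. One then checks $\mathcal{F}$ is preserved by each. Move (a) holds automatically from the bifunctoriality of $\otimes$ and $\circ$, i.e. the interchange law $(f \otimes \id) \circ (\id \otimes g) = (\id \otimes g) \circ (f \otimes \id)$. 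The snake moves reduce to the counit and antipode axioms of $\mathscr{A}$ for the un-rotated form and to condition (i) for the $\mu$-twisted form. Reidemeister~II is the invertibility $R \circ R^{-1} = \id = R^{-1} \circ R$, together with the snake identities for the mixed-orientation variants. Reidemeister~III is the quantum Yang--Baxter equation $\breve{R}_{12}\breve{R}_{13}\breve{R}_{23} = \breve{R}_{23}\breve{R}_{13}\breve{R}_{12}$, which follows from the hexagon identities $(\Delta \otimes \id)\breve{R} = \breve{R}_{13}\breve{R}_{23}$ and $(\id \otimes \Delta)\breve{R} = \breve{R}_{13}\breve{R}_{12}$ combined with $\breve{R}\,\Delta = \Delta^{\mathrm{op}}\,\breve{R}$. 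The moves in (e) are where the second good-unit condition (ii), $\sum \alpha_i \mu^{-1} \beta_i = \sum \beta_i \mu \alpha_i$, enters: it is the coherence needed so that pulling a strand carrying a crossing around a corner does not change the operator and so that the blackboard framing is tracked consistently.

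Uniqueness is then immediate: every tangle diagram is a composite of tensor products of elementary diagrams, so any assignment functorial under $\circ$ and $\otimes$ agreeing with the five prescribed values must equal $\mathcal{F}$ everywhere. I expect the main obstacle to be the well-definedness step, specifically the moves of type (e) — the mixed-orientation Reidemeister~II and the crossing-past-corner slides — which are the framing-sensitive cases and the only ones that genuinely require conditions (i) and (ii) on $\mu$. A cleaner organization of the same proof is categorical: colored framed oriented tangles form the monoidal category freely generated by the elementary diagrams subject to exactly the relations (a)--(e), and the theorem says $\mathscr{A}$-modules equipped with $\breve{R}$ and the good unit $\mu$ realize these relations, with $\mathcal{F}$ the induced functor; but unwinding this still amounts to the verifications above.
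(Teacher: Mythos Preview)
The paper does not prove this theorem: it is stated as background with citations to \cite{km91, rt90} and no proof is given, so there is nothing in the paper to compare your argument against. Your outline is essentially the standard Reshetikhin--Turaev/Kirby--Melvin proof from those references --- slice into elementary pieces, assign the listed operators, and verify invariance under a complete set of framed-tangle moves --- and it is correct in shape.

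Two small points. First, the basic (un-twisted) snake identities $(\id \otimes E)(N \otimes \id) = \id$ and $(E \otimes \id)(\id \otimes N) = \id$ are pure linear algebra for finite-dimensional vector spaces, not consequences of the counit or antipode axioms; the Hopf structure enters only to make $E$ and $N$ into $\mathscr{A}$-module maps, which you already handle separately. Second, your list of moves should be checked against a specific reference (e.g.\ \cite[Thm.~3.6]{km91} or Turaev's presentation) to be sure it is complete --- in particular the framed Reidemeister~I replacement (cancellation of opposite kinks) needs to appear, and the precise division of labor between conditions (i) and (ii) on $\mu$ across the moves in your class (e) is worth spelling out, since that is where most of the algebraic content sits.
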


The map $R$ is also called an $R$-matrix, and the map $\mathcal{F}_T$ is called the \emph{operator invariant} of $T$. 

\begin{rmk}
The quasitriangular Hopf algebra in this construction can be replaced  more generally with a \textit{ribbon category} \cite{t94}.
\end{rmk}

We set $\mathscr{A} = \mathscr{A}_q = \mathcal{U}_q(sl(2, \C))$, the quantized universal enveloping algebra of $sl(2,\C)$ specialized to $q = e^{2\pi i/r}$ (see \cite{k95}), and fix a certain good unit $\mu$. We also limit tangle colorings to a distinguished set of $\mathscr{A}$-modules $\{V^1, \dots, V^r\}$, $V^n$ coming from the unique $n$-dimensional irreducible representation of $sl(2, \C)$ \cite{rt90}. If $L \subset I^2$ is a $k$-component link and ${\bf n} = (n_1, \dots, n_k)$ a multi-integer, let $L({\bf n})$ be the $\mathscr{A}$-colored link $L$ with $j$th component colored $V^{n_j}$. With this setup, the colored Jones polynomial is defined to be
\begin{equation}
\label{eq:tangle_jones}
J_n(L;q) = (q^{(n^2 - 1)/4})^{-w(D)} \frac{\{1\}}{\{n\}} \mathcal{F}^{\mathscr{A}_q}_{L(n, \dots, n)} = (q^{(n^2 - 1)/4})^{-w(D)} \frac{1}{[n]} \mathcal{F}^{\mathscr{A}_q}_{L(n, \dots, n)}
\end{equation}
where $q = e^{2\pi i/r}$, $r \in \N$, and $w(D)$ is the writhe of the tangle diagram of $L$. The terms $\{m\}$ and $[m]$ are the quantum integers defined by
\begin{align*}
\{m\} &= \{m\}_q = q^{m/2} - q^{-m/2} \\
[m] &= [m]_q = \frac{\{m\}}{\{1\}}.
\end{align*}
The boundary $\mathscr{A}$-modules of any colored link $L({\bf n})$ are both $\C$, so $\mathcal{F}^{\mathscr{A}_q}_{L({\bf n})}$ is a linear map from $\C$ to $\C$---a scalar. This scalar is a Laurent polynomial in $q$.

In fact, the invariant $J_n$ as defined in (\ref{eq:tangle_jones}) is not strictly equal to $J_n$ as defined in (\ref{eq:skein_jones})---for example, the two definitions differ by a sign on the two-component unlink. Achieving precise equality requires a normalization of (\ref{eq:tangle_jones}) equivalent to specializing $\mathcal{F}$ to the quantum group $SU(2)_q$ rather than $U_q(sl(2, \C))$ \cite{km91, m93}. For this reason, we refer to the invariant (\ref{eq:skein_jones}) as the \emph{$SU(2)$ colored Jones polynomial}.

In the following section we generalize the theory of tangle operators to links in $T^2 \times I$, leading in Section 4 to the definition of the toroidal colored Jones polynomial.

\section{Pseudo-Operator Invariants}

For a link $L$ in the thickened torus $T^2 \times I$, we take a regular projection to $T^2 \times \{0\}$ to obtain a link diagram $D \subset T^2$. Let $\pi : \R^2 \to T^2$ be a smooth, orientation-preserving covering map with fundamental domain the unit square $I^2 \subset \R^2$ and deck transformations generated by horizontal and vertical unit shifts of $\R^2$. Let $\tilde{D} = \pi^{-1}(D)$. Define $p \in D$ to be a \emph{local extremum} of $D$, so that a small neighborhood of $p$ is a cap or cup, if $p$ has a lift $\tilde{p}$ which is a local extremum of $\tilde{D}$ with respect to the height function $(x,y) \mapsto y$ on $\R^2$.

\begin{defn}
A point $p \in D$ is a \emph{critical point} of $D$ if it is a local extremum or crossing point. A \emph{torus diagram} $D \subset T^2$ is a regular projection of a smooth link $L \subset T^2 \times I$ onto $T^2 \times \{0\}$, such that critical points are isolated.
\end{defn}

Below, all diagrams in $T^2$ are assumed to be torus diagrams.

Fix a quasitriangular Hopf algebra $(\mathscr{A}, R)$ and good unit $\mu \in \mathscr{A}$. As in Section 2.2, a $V$-\emph{coloring} (or simply \emph{coloring}) of $D$ is an assignment of an $\mathscr{A}$-module to each link component.

We now define an invariant $\Phi_\pi = \Phi^{\mathscr{A}, R, \mu}_\pi$ of oriented $V$-colored link diagrams in $T^2$ with framing parallel to $T^2 \times \{0\}$. Let $D \subset T^2$ be such a diagram and $P$ the set of critical points of $D$. For each $p \in P$, there exists a small rectangular neighborhood $T(p)$ of $p$ and a local section $\psi$ of $\pi$, $\psi : T(p) \to \R^2$, giving $T(p)$ the structure of an oriented, blackboard-framed, elementary tangle diagram. In this way Theorem \ref{thm:fund_tangles} assigns an $\mathscr{A}$-linear operator $\mathcal{F}_{T(p)}$ to each $T(p)$, $p \in P$, with boundary $\mathscr{A}$-modules $T(p)_\pm$. We cannot generally extend these local assignments to a global assignment of an $\mathscr{A}$-linear operator to $D$, as Theorem \ref{thm:fund_tangles} does for tangle diagrams in $I^2$. However, the local assignments of operators to each critical point still allow us to give $D$ a value in $\C$ using the {\it state sum} formulation of the theory, as explained below.

For each $\mathscr{A}$-module $V$, fix a basis $B_V$ of $V$ as a $\C$-vector space. Removing the set of critical points $P$ from $D$ breaks it into components, each colored by some $V$ and each oriented upward or downward when lifted to $\R^2$. A {\it state} $\sigma$ is an assignment of a label $\sigma(S)$ to each component $S$ of $D \setminus P$ as follows: if $S$ is colored by the module $V$ and oriented downward, $\sigma(S)$ is an element of $B_V$. If $S$ is oriented upward, $\sigma(S)$ is an element of the dual basis $B_{V^*}$.

A state $\sigma$ determines a \emph{weight} $\omega_p(\sigma)$ of each critical point. For each $p \in P$, taking tensor products of the labels $\sigma(S)$ of the strands above and below $p$ gives basis elements $\sigma(p)_\pm$ of the modules $T(p)_\pm$. Define the weight $\omega_p(\sigma) \in \C$ to be the coefficient of $\sigma(p)_+$ in $\mathcal{F}_{T(p)}(\sigma(p)_-)$, and define the weight of the state $\sigma$ by
\begin{equation}
\label{eq:state_sum_1}
\omega(\sigma) = \prod_{p \in P} \omega_p(\sigma)
\end{equation}
where the empty product (if $D$ contains no critical points) is defined to be $1$. Finally, set
\begin{equation}
\label{eq:state_sum_2}
\Phi_\pi(D) = \sum_{\sigma} \omega(\sigma),
\end{equation}
where the sum is over all states of $D$.

For an example computation of the weight of a critical point, let $p$ be the crossing point with neighborhood $T(p)$ shown in Figure \ref{fig:crit_point}. Viewing $T(p)$ as a tangle diagram, both tangle components of $T(p)$ are colored by the same $\mathscr{A}$-module $V$, so $T(p)_- = T(p)_+ = V \otimes V$. Additionally, since $T(p)$ is a positive crossing, $\mathcal{F}_{T(p)} = R$, viewed as a map from $V \otimes V$ to itself. In the given state $\sigma$, the diagram components of $T(p) \setminus p$ are assigned basis elements $e_i, e_j, e_k, e_l \in V$ as shown, where $\{e_0, e_1, \dots, e_{n - 1}\}$ is a basis for $V$. We have $\sigma(p)_- = e_k \otimes e_l$, $\sigma(p)_+ = e_i \otimes e_j$, and if $R$ satisfies
$$
R(e_k \otimes e_l) = R^{0,0}_{kl} (e_0 \otimes e_0) + \dots + R^{ij}_{kl} (e_i \otimes e_j) + \dots + R^{n - 1, n - 1}_{kl} (e_{n - 1} \otimes e_{n - 1}),
$$
where the $R^{st}_{kl} \in \C$ are scalars, $0 \leq s,t \leq n - 1$, then $\omega_p(\sigma) = R^{ij}_{kl}$.
\begin{figure}[H]
\labellist
\small\hair 2pt
\pinlabel $V$ at 35 140 
\pinlabel $V$ at 110 140
\pinlabel $i$ at 8 110
\pinlabel $j$ at 135 110
\pinlabel $k$ at 8 40
\pinlabel $l$ at 135 40
\endlabellist
\centering
\includegraphics[height=2cm]{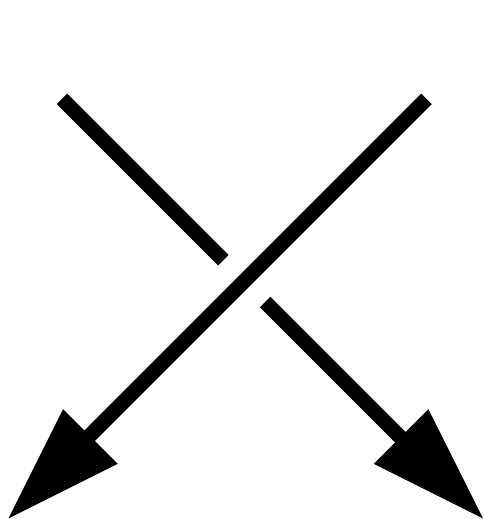}
\caption{A $V$-colored tangle $T(p)$ near a crossing point $p$, labelled by a state $\sigma$}
\label{fig:crit_point}
\end{figure}

\begin{lemma}
\label{thm:basis_invar}
The value $\Phi_\pi(D)$ does not depend on the choice of bases of the colors $V$. 
\end{lemma}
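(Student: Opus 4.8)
The plan is to show that $\Phi_\pi(D)$ is secretly a coordinate-free contraction of the operators $\mathcal{F}_{T(p)}$, so that the chosen bases $B_V$ enter only as a bookkeeping device. First I would set up the combinatorics of $D$. Removing the set of critical points $P$ cuts $D$ into edges; to each edge $e$ attach the vector space $W_e$ equal to the color $V$ if $e$ is oriented downward in the lift, and to $V^*$ if $e$ is oriented upward. The key point is that each edge $e$ that is an arc has exactly two endpoints among the critical points, and these see $e$ in complementary ways: at its lower endpoint $p$ the arc $e$ exits through the top of the local tangle, so $W_e$ is one tensor factor of the target $T(p)_+$, while at its upper endpoint $q$ the arc $e$ enters through the bottom, so $W_e$ is one tensor factor of the source $T(q)_-$. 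Writing each operator as a tensor $\mathcal{F}_{T(p)} \in \operatorname{Hom}(T(p)_-, T(p)_+) = T(p)_+ \otimes T(p)_-^*$ and forming $\bigotimes_{p \in P} \mathcal{F}_{T(p)}$, every arc $e$ then contributes precisely one factor $W_e$ (coming from some $T(p)_+$) and one factor $W_e^*$ (coming from some $T(q)_-^*$). When the two endpoints of $e$ coincide both factors come from the same operator; when $e$ is a simple closed loop disjoint from $P$ it contributes neither.

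Next I would identify $\Phi_\pi(D)$ with the scalar obtained by contracting, for each arc $e$, the factor $W_e$ against the factor $W_e^*$ via the canonical evaluation pairing $\operatorname{ev}_{W_e} \colon W_e \otimes W_e^* \to \C$, and multiplying by $\prod \dim W_e$ over the closed-loop edges. This is just a matter of rewriting the definition \eqref{eq:state_sum_1}--\eqref{eq:state_sum_2} in coordinates. A state $\sigma$ is exactly a choice of basis vector $\sigma(e) \in B_{W_e}$ for each arc $e$ (together with an irrelevant choice on closed loops); the weight $\omega_p(\sigma)$ is by construction the coefficient of the basis vector $\sigma(p)_+$ of $T(p)_+$ in the image of the basis vector $\sigma(p)_-$ of $T(p)_-$, i.e.\ a matrix entry of $\mathcal{F}_{T(p)}$; and summing the product of these entries over all $\sigma$ is precisely the result of feeding $\bigotimes_p \mathcal{F}_{T(p)}$ into $\bigotimes_{\text{arcs } e} \operatorname{ev}_{W_e}$, since $\operatorname{ev}_W\bigl(\sum_b x_b\, b \otimes \sum_c y^c\, c^*\bigr) = \sum_b x_b y^b$. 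A closed-loop edge contributes the factor $\dim W_e$ because $\omega(\sigma)$ does not depend on $\sigma(e)$ there, so summing over $\sigma(e) \in B_{W_e}$ just multiplies by $|B_{W_e}|$.

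With this reformulation the conclusion is immediate: the operators $\mathcal{F}_{T(p)}$ are intrinsic linear maps (Theorem~\ref{thm:fund_tangles} produces them without reference to any $B_V$), the evaluation maps $\operatorname{ev}_{W_e}$ and the reordering of tensor factors are intrinsic, and $\dim W_e$ is intrinsic; hence the scalar they compute, namely $\Phi_\pi(D)$, is independent of the choice of bases. I expect the only genuine work, and the main place to be careful, to be the bookkeeping in the first step: matching, for each edge, the output slot at one critical point with the dualized input slot at the other, verifying that the orientation convention ($V$ versus $V^*$) agrees at the two ends of an arc, and checking the self-loop and closed-loop degenerate cases fit the same scheme; once this pairing is pinned down nothing further is needed. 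Alternatively one can stay in coordinates: change the basis of a single color $V$ by an invertible matrix $M$, note that $B_{V^*}$ changes by the contragredient $M^{-\mathsf T}$, and observe that in the transformed state sum each $V$-colored arc contributes one factor of $M$ from one endpoint and one factor of $M^{-1}$ from the other, so they cancel; iterating over the colors gives the lemma.
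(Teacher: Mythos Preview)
Your proof is correct and is the natural tensor-network argument: once the state sum is recognized as the contraction of the intrinsic operators $\mathcal{F}_{T(p)}$ along the canonical evaluation pairings on the edge spaces $W_e$, basis-independence is immediate. The paper, however, takes a genuinely different route. Rather than staying on the torus, it cuts $D$ open along one side of the fundamental square and reconnects the severed strands by monotone virtual arcs, obtaining a virtual tangle diagram $D'$ in $I^2$ with $\partial_- D'$ identified with $\partial_+ D'$. Assigning the transposition map $\tau$ to each virtual crossing and running the usual operator-invariant machinery yields a single $\C$-linear endomorphism $\phi$ of the common boundary module, and the paper sets $\Phi'_\pi(D') = \operatorname{Tr}(\phi)$; a direct comparison of state sums then gives $\Phi'_\pi(D') = \Phi_\pi(D)$, and basis-independence follows from that of the trace.

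Your approach is shorter and more self-contained for this lemma. The paper's detour buys more, though: the trace description of $\Phi_\pi$ is reused verbatim in the proof of Proposition~\ref{thm:why_two} (an essential simple closed curve gives $\operatorname{Tr}(\operatorname{id}_V) = \dim V$) and is essential to Theorem~\ref{thm:relationship}, where the identity $J^T_n(L;q) = n \cdot J_n(L';q)$ is read off from the trace of the $(1,1)$-tangle operator. So the paper's proof doubles as the construction of a tool needed elsewhere, whereas yours settles the lemma cleanly but would leave those later arguments to be redone.
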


\begin{proof}
To prove the lemma we give an alternate construction of $\Phi_\pi$. Recall $I^2$ is a fundamental domain for the covering map $\pi : \R^2 \to T^2$; adjusting $D$ if necessary, we assume no critical points of $\tilde{D} = \pi^{-1}(D)$ occur in $\partial I^2$. Shift $\tilde{D} \cap (\{1\} \times I)$, the points of $\tilde{D}$ intersecting the right side of $I^2$, slightly upward by an isotopy of $I^2$ which is the identity on $\{0\} \times I$ and does not change the set of critical points of $\tilde{D}$. Because the left and right sides of $I^2$ are identified in the torus, each point $q \in \tilde{D} \cap (\{0\} \times I)$ has a corresponding point $q' \in \tilde{D} \cap (\{1\} \times I)$, slightly higher than $q$ as a result of the isotopy. As a final step of the construction, connect each $q$ and $q'$ by a curve $c_q : I \to I^2$ which satisfies $c(0) = q$, $c(1) = q'$, and is monotonically increasing in height. This produces a {\it virtual tangle diagram} $D' = (I^2 \cap \tilde{D}) \cup_{q \in D \cap (\{0\} \times I)}c_q(I)$ whose classical (i.e.~non-virtual) critical points are the same as the critical points of $D$ and whose virtual crossings are any point where some $c_q(I)$, $q \in \tilde{D} \cap (\{0\} \times I)$, intersects another point of $D'$. See Figure \ref{fig:basis_construct} for an example, where the virtual crossing on the right is circled. We assume virtual crossings are isolated from other critical points.

\begin{figure}[H]
\centering
\includegraphics[height=4cm]{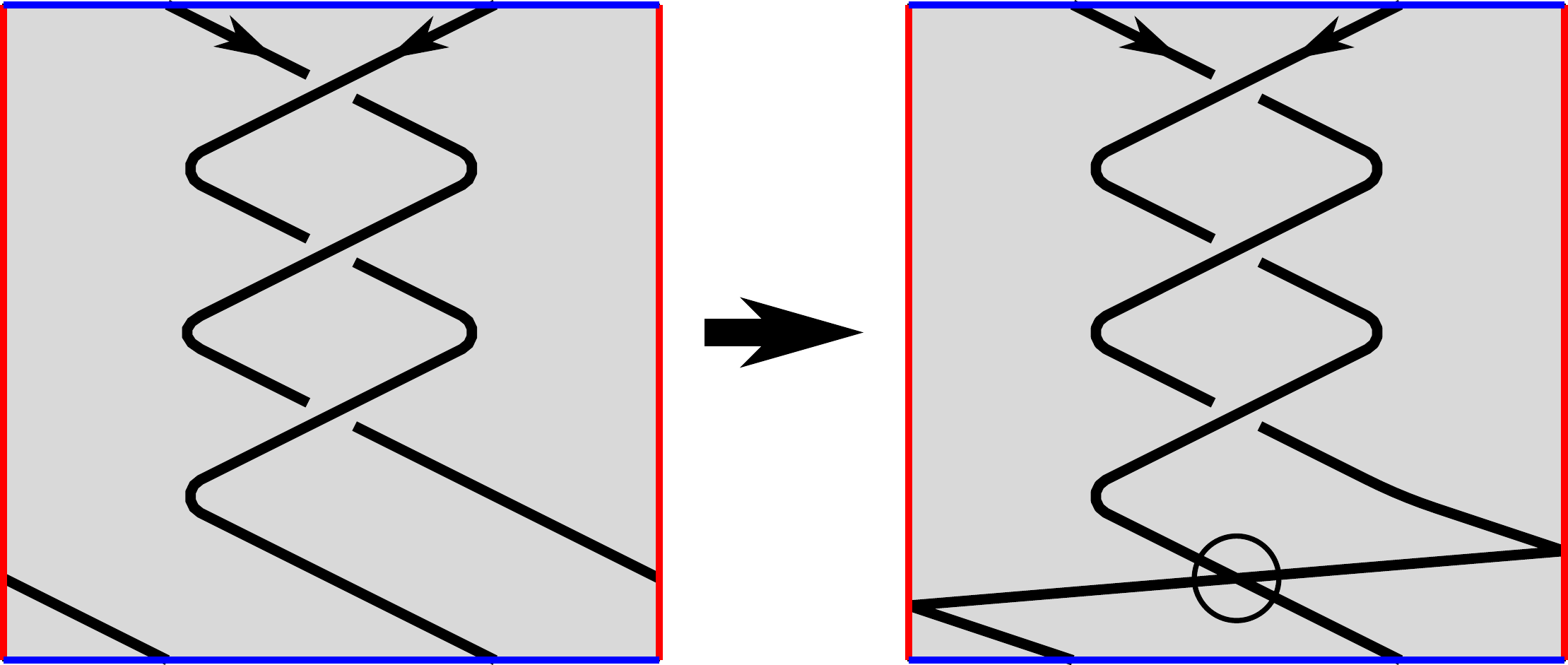}
\caption{Constructing a virtual tangle diagram from a torus diagram}
\label{fig:basis_construct}
\end{figure}

The coloring of $D$ induces a coloring of $D'$ in an obvious way.  As before, let $P$ be the set of (classical and virtual) critical points of $D'$ with $T(p)$ a small rectangular neighborhood of $p \in P$. If $p \in P$ is a classical critical point, the functor $\mathcal{F}$ of Theorem \ref{thm:fund_tangles} associates an $\mathscr{A}$-linear operator $\mathcal{F}_{T(p)}$ to $T(p)$ which agrees with the operator assigned to $T(p)$ in the construction of $\Phi_\pi$. If $p \in P$ is a virtual crossing, define $\mathcal{F}_{T(p)}$ to be the transposition map $\tau : \alpha \otimes \beta \mapsto \beta \otimes \alpha$ as in \cite{k99}. (This is a $\C$-linear map but not generally an $\mathscr{A}$-linear one.) Because $\partial_-D'$ and $\partial_+D'$ are identified in the torus, $D'_- = D'_+ = V$ for some $\mathscr{A}$-module $V$. Thus, extending the local operator assignments $\mathcal{F}_{T(p)}$, $p \in P$, as in Theorem \ref{thm:fund_tangles} associates $D'$ with a $\C$-linear map $\phi : V \to V$. Define $\Phi_\pi'(D') = \text{Tr}(\phi)$; we claim $\Phi_\pi'(D') = \Phi_\pi(D)$.

Computing $\Phi_\pi'(D')$ as a state sum, as in \cite{km91, k99, my18}, shows the two invariants agree. Fix a basis for each $\mathscr{A}$-module. As in the construction of $\Phi_\pi$, a state is an assignment of basis elements to components of $D' \setminus P$ and the weight of a state is the product of the weights of the critical points. Taking the trace of $\phi$ ensures identified strands of $D'_-$ and $D'_+$ are assigned the same basis element in any state with nonzero weight. If the strands near a virtual crossing $p$ are assigned basis elements $e_i, e_j, e_k, e_l$, as in Figure \ref{fig:virtual_state}, the weight of $p$ is $\delta_i^l\delta_j^k$, $\delta$ the Kronecker delta. This ensures the identified strands on either side of $p$ have the same state, in which case $p$ has weight $1$. 

\begin{figure}[H]
\labellist
\small\hair 2pt
\pinlabel $i$ at 8 110
\pinlabel $j$ at 135 110
\pinlabel $k$ at 8 40
\pinlabel $l$ at 135 40
\endlabellist
\centering
\includegraphics[height=1.8cm]{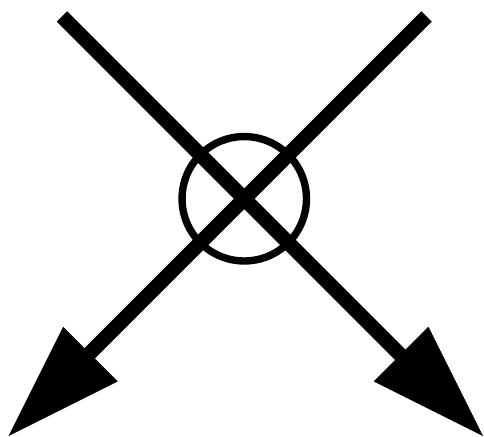}
\caption{A state assignment near a virtual crossing}
\label{fig:virtual_state}
\end{figure}

If we compute $\Phi_\pi(D)$ using the same bases, we have
$$
\Phi_\pi(D) = \sum_{\sigma} \omega(\sigma) = \text{Tr}(\phi) = \Phi'_\pi(D').
$$
This shows the definition of $\Phi'_\pi(D')$ does not depend on the choice of curves $c_q, q \in D \cap (\{0\} \times I)$. Since $\Phi_\pi'$ does not depend on a choice of basis, neither does $\Phi_\pi$.
\end{proof}

We write $\Phi$ rather than $\Phi_\pi$ in the next definition because we will ultimately show $\Phi$ does not depend on the choice of covering map $\pi$. Before proving this, however, we give the main result of the section.
\begin{defn}
\label{def:pseudo}
Let $L \subset T^2 \times I$ be a framed, oriented, $V$-colored link and $D$ a diagram for $L$ with framing parallel to $T^2$. Define the \emph{pseudo-operator invariant} of $L$, depending on $(\mathscr{A}, R)$ and $\mu$, by
$$
\Phi(L) = \Phi^{\mathscr{A}, R, \mu}(L) = \Phi^{\mathscr{A}, R, \mu}_\pi(D).
$$
\end{defn}

\begin{thm}
\label{thm:invariance}
$\Phi$ is an invariant of framed, oriented, $V$-colored links in $T^2 \times I$. That is, if $D_1$, $D_2$ are two diagrams of a framed, oriented, $V$-colored link $L \subset T^2 \times I$ with each having framing parallel to $T^2 \times \{0\}$, then $\Phi(D_1) = \Phi(D_2)$.
\end{thm}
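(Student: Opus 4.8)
The plan is to reduce the statement, for a fixed covering $\pi$, to the invariance of $\Phi_\pi$ under a short list of moves on torus diagrams, and then to dispatch each move either directly from the state sum, from the locality of the state sum together with Theorem~\ref{thm:fund_tangles}, or from the trace description of $\Phi_\pi$ obtained in the proof of Lemma~\ref{thm:basis_invar}. First I would invoke the surface analogue of the classical regular-isotopy theorem: two torus diagrams $D_1,D_2$ with framing parallel to $T^2\times\{0\}$ represent the same framed oriented link in $T^2\times I$ if and only if they are related by a finite sequence of oriented Reidemeister moves of types~II and~III, each supported in a disk, together with ambient isotopies of $T^2$. (A type~I move is excluded since it changes the blackboard framing, which is why only types~II and~III occur.) Passing to a lift in $\R^2$, where the height function $(x,y)\mapsto y$ is available, a general-position argument refines the ambient isotopies into three kinds of elementary moves: (a) isotopies supported in a disk that leave the set of critical points unchanged; (b) a zig-zag move supported in a disk --- the birth or death of an adjacent cap--cup pair; and (c) a \emph{sweep}, which pushes a single strand once across the image in $T^2$ of $\partial I^2$, cyclically permuting the stack of horizontal slices of the virtual tangle diagram $D'$.

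Moves of type (a) leave $\Phi_\pi(D)$ literally unchanged, since the state sum depends only on the set of critical points, the oriented elementary-tangle type at each, and the incidence combinatorics of the components of $D\setminus P$, all of which such a move preserves. For a Reidemeister move or a zig-zag move supported in a disk $B$, I would first arrange --- re-routing the connecting arcs $c_q$ from the proof of Lemma~\ref{thm:basis_invar}, and applying a preliminary sweep if $B$ meets the seam --- that $B$ lifts homeomorphically to a disk in $\R^2$ disjoint from all virtual crossings of $D'$. Then the move is a classical Reidemeister or ``snake'' straightening move on the honest framed tangle diagram occupying a consecutive block of horizontal slices of $D'$; since these are precisely the moves under which the operator invariant of a framed tangle is defined (Theorem~\ref{thm:fund_tangles}), the composite $\C$-linear operator $\phi=\mathcal{F}_{D'}\colon V\to V$ is unchanged, hence so is $\Phi_\pi=\text{Tr}(\phi)$. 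Equivalently: by locality, summing the weights of the critical points inside $B$ over the labels of the strands meeting $\partial B$ reproduces $\mathcal{F}$ of the sub-tangle in $B$, which is move-invariant, while each state restricted outside $B$ is untouched.

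The remaining move is the sweep. Cutting $T^2$ along the seam presents $D'$ as a vertical composite $D'=T_{\mathrm{top}}\circ T_{\mathrm{bot}}$ of virtual tangle diagrams, with $\mathcal{F}_{T_{\mathrm{bot}}}\colon V\to W$ and $\mathcal{F}_{T_{\mathrm{top}}}\colon W\to V$ for $V=D'_\pm$ and $W$ the colors along the slice being crossed, so $\phi=\mathcal{F}_{T_{\mathrm{top}}}\circ\mathcal{F}_{T_{\mathrm{bot}}}$. A sweep across the horizontal part of the seam reattaches the two pieces in the opposite order --- legitimate precisely because $\partial_-D'$ and $\partial_+D'$ are identified in $T^2$ --- so the virtual tangle diagram of the swept diagram has operator $\mathcal{F}_{T_{\mathrm{bot}}}\circ\mathcal{F}_{T_{\mathrm{top}}}\colon W\to W$, and cyclicity of the trace gives $\text{Tr}(\mathcal{F}_{T_{\mathrm{top}}}\circ\mathcal{F}_{T_{\mathrm{bot}}})=\text{Tr}(\mathcal{F}_{T_{\mathrm{bot}}}\circ\mathcal{F}_{T_{\mathrm{top}}})$. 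A sweep across the vertical part of the seam only inserts or deletes a pair of virtual crossings and modifies the arcs $c_q$, so it is covered by the independence of $\Phi_\pi'(D')$ from the choice of the $c_q$ already established in the proof of Lemma~\ref{thm:basis_invar}.

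The main obstacle I anticipate is not any single computation but the bookkeeping at the seam: making the reduction to local moves and sweeps rigorous on the torus, where no global height function exists, and checking that every Reidemeister or zig-zag move can be pushed off the seam and the arcs $c_q$ without circularity --- the preliminary sweeps used to clear the disk $B$ are themselves among the moves proved invariant, so this is legitimate, but it must be organized carefully so that no step is justified by the step it is enabling. The conceptual content --- the one ingredient beyond the classical Reshetikhin--Turaev argument --- is that the only ``global'' ambiguity of a torus diagram is cyclic rotation of $D'$, and this is exactly what is absorbed when the operator $\mathcal{F}_{D'}$ is replaced by its trace.
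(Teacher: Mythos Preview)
Your argument is correct, but it takes a different route from the paper's. The paper lifts both diagrams and the ambient isotopy between them to $\R^2$, obtaining a \emph{biperiodic} isotopy $\tilde f_t$ between biperiodic diagrams $\tilde D_1,\tilde D_2$. Since $\R^2$ carries a global height function, the standard Reshetikhin--Turaev/Freyd--Yetter decomposition applies there verbatim: $\tilde f_t$ factors into diagram-preserving isotopies and the local tangle moves of Figure~\ref{fig:tangle_moves}, all applied biperiodically. These descend to the same local moves on $T^2$, and invariance of $\Phi_\pi$ under each is then the usual algebraic identity for $R$ and $\mu$ checked at a single critical point. No seam, no sweeps, no appeal to trace cyclicity --- the covering absorbs all of the global topology, which is what buys the paper its brevity. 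Your route instead keeps a fixed fundamental domain in view, passes to the virtual tangle $D'$ of Lemma~\ref{thm:basis_invar}, and isolates the one genuinely non-local move (a horizontal sweep) as a cyclic reordering of $\mathcal{F}_{T_{\text{top}}}\circ\mathcal{F}_{T_{\text{bot}}}$, handled by $\text{Tr}(fg)=\text{Tr}(gf)$. This is more explicit about \emph{why} the trace is the right closure and costs you exactly the seam bookkeeping you anticipate. One small omission: your list of disk-supported moves should also include sliding a crossing past a cap or cup (moves (d) and (e) in Figure~\ref{fig:tangle_moves}); these are neither Reidemeister moves nor zig-zags, but they are dispatched by the same appeal to Theorem~\ref{thm:fund_tangles}.
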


\begin{proof}
Consider the lift $\tilde{D}_i$ of $D_i$ to $\R^2$ for $i = 1, 2$. By construction, $\tilde{D}_i$ is the diagram of a biperiodic link $\tilde{L} \subset \R^2 \times I$ such that the critical points of $\tilde{D}_i$ are lifts of critical points of $D_i$. Let $f_t(x) : I \times (T^2 \times I) \to T^2 \times I$ be an ambient isotopy carrying $D_1$ to $D_2$, so that $f_0 \equiv \id$ and $f_1(D_1) = D_2$. Then $f_t$ lifts to a biperiodic isotopy $\tilde{f}_t$ of $\R^2 \times I$ taking $\tilde{D}_1$ to $\tilde{D}_2$. Because $\tilde{D}_1$ and $\tilde{D}_2$ are locally blackboard-framed tangle diagrams, a well-known theorem \cite{rt90, fy89} asserts that $\tilde{f}_t$ decomposes into a sequence $\tilde{g}_t$ of diagram-preserving isotopies and the moves shown in Figure \ref{fig:tangle_moves} (with all possible orientations). We assume the isotopies and moves are biperiodic, i.e.~applied to each lifted copy of a region of $D_1$ simultaneously.

\begin{figure}[H]
\subcaptionbox{}{
\includegraphics[scale=0.5]{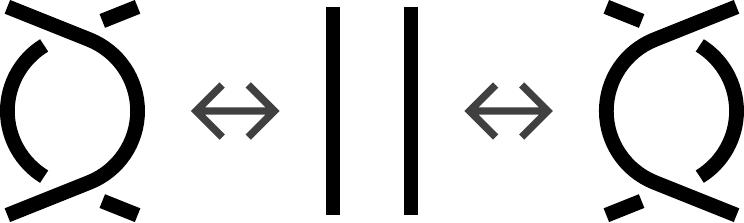}
}
\hspace{0.75cm}
\subcaptionbox{}{
\includegraphics[scale=0.5]{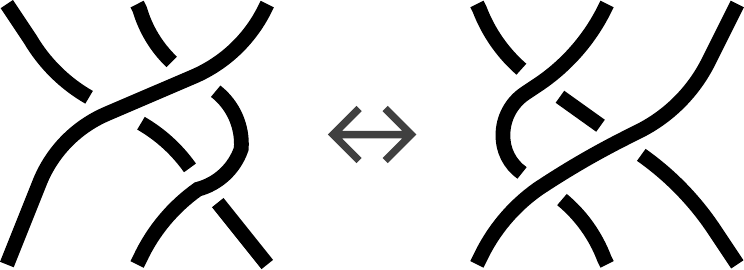}
}
\hspace{0.75cm}
\subcaptionbox{}{
\includegraphics[scale=0.5]{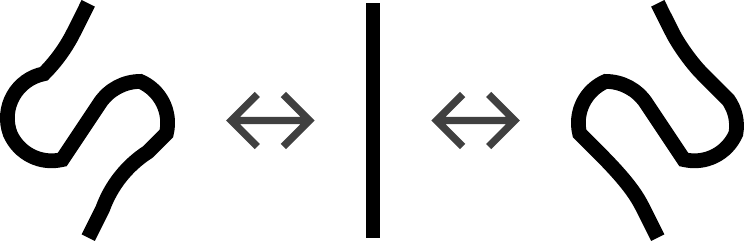}
}
\subcaptionbox{}{
\includegraphics[scale=0.5]{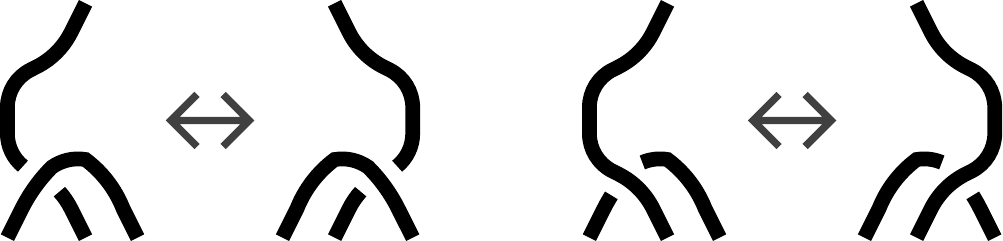}
}
\hspace{2cm}
\subcaptionbox{}{
\includegraphics[scale=0.5]{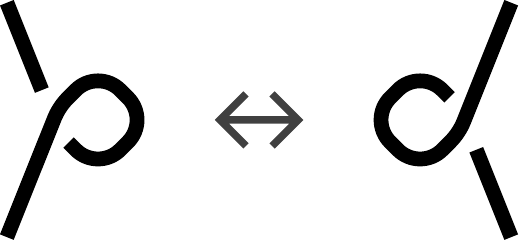}
}
\caption{Local Tangle Moves}
\label{fig:tangle_moves}
\end{figure}

Because $\tilde{g}_t$ is biperiodic, it descends to a sequence $g_t$ of the same moves on $T^2$ carrying $D_1$ to $D_2$. Hence it suffices to check invariance of $\Phi$ under each local move, which follows from properties of $\mathscr{A}$, $R$, and $\mu$. For example, the equation $R^{-1} \circ R = \id = R \circ R^{-1}$ implies invariance under move (a). Move (b) follows from the fact that $R$ satisfies the Yang-Baxter equation \cite{t88-2}, and moves (c)--(e) also follow from properties of $R$ and $\mu$---see \cite[Thm.~3.6]{km91} for details.
\end{proof}

\begin{rmk}
\label{rmk:virtual}
The construction of $\Phi_\pi$ given in the proof of Lemma \ref{thm:basis_invar} is similar to Kauffman's quantum invariant for virtual links \cite{k99}, in that virtual crossings are associated with the transposition map $\tau$. However, the two invariants have significant differences. We can think of the virtual diagram $D'$ in the proof of Lemma \ref{thm:basis_invar} as the diagram of a tangle on a cylinder $S^1 \times I$: the original diagram $D$ sits on the ``front'' of the cylinder, while the added curves $c_q$ circle around the ``back.'' This is one difference between our invariant and Kauffman's---the use of a cylinder to create the virtual diagram rather than a torus. Another difference is that Kauffman's invariant is defined in the context of \textit{rotational virtual knot theory} (see \cite{k15})---it is not invariant under virtual Reidemeister I-moves. We achieve invariance under virtual $I$-moves by placing all classical critical points on the front of the cylinder, where the orientation of the cylinder matches the orientation of the virtual diagram. If a critical point were moved to the back of the cylinder, that point's orientation on the cylinder would not match its orientation in the virtual diagram, and the two local operator assignments in the two constructions of $\Phi_\pi$ would disagree.
\end{rmk}

We use the phrase ``pseudo-operator invariant'' because, as remarked above, a torus diagram $D \subset T^2$ cannot generally be associated with an $\mathscr{A}$-linear operator using our construction. It is interesting that the local assignments of $\mathscr{A}$-linear operators to critical points of $D$ still allow us to define $\Phi(D)$, which seems to encode geometric information about the link $L$. For an example of computing $\Phi$ with a specific $\mathscr{A}$, see Section 6.

The fact that $\Phi(D) = \Phi_\pi(D)$ does not depend on $\pi$ follows from the proposition below, which shows $\Phi_\pi$ is invariant under orientation-preserving homeomorphisms of $T^2$.
\begin{prop}
\label{thm:dehn}
Let $D, D' \subset T^2$ be oriented, $V$-colored link diagrams with blackboard framing. If $f$ is an orientation-preserving homeomorphism of $T^2$ satisfying $f(D) = D'$, then $\Phi_\pi(D) = \Phi_\pi(D')$.
\end{prop}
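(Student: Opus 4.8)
The plan is to reduce an arbitrary orientation-preserving homeomorphism $f$ of $T^2$ to a composition of simple building blocks and check invariance of $\Phi_\pi$ on each. Up to isotopy, the mapping class group of $T^2$ is $\mathrm{SL}(2,\Z)$, generated by Dehn twists along the meridian and longitude; so it suffices to prove the statement when $f$ is (i) an isotopy of $T^2$, (ii) a Dehn twist along one of the two generating curves. For case (i), an isotopy of $T^2$ lifts to a biperiodic isotopy of $\R^2$, which — just as in the proof of Theorem~\ref{thm:invariance} — decomposes into diagram-preserving isotopies and the local tangle moves of Figure~\ref{fig:tangle_moves}; since $\Phi_\pi$ is already known to be invariant under those moves by Theorem~\ref{thm:invariance}, case (i) is immediate. (In fact isotopies of $T^2$ carrying $D$ to $D'$ are ambient isotopies of $T^2\times I$, so this is already contained in Theorem~\ref{thm:invariance}.) So the real content is case (ii).

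For the Dehn twist, I would use the virtual-tangle-diagram model of $\Phi_\pi$ developed in the proof of Lemma~\ref{thm:basis_invar}, where $\Phi_\pi(D) = \mathrm{Tr}(\phi)$ for a $\C$-linear map $\phi : V \to V$ built from the elementary operators at the classical critical points and transposition maps $\tau$ at the virtual crossings. A Dehn twist along the curve parallel to $T^2 \times \{0\}$ (the "longitude" in the covering-map picture) can be realized by cutting $T^2$ along that curve, applying a full rotation, and regluing; in the virtual diagram this inserts a band of extra virtual crossings where the added curves $c_q$ now wind once more around the "back" of the cylinder. The key computation is that this band of virtual crossings, read as a $\C$-linear operator, is just a permutation of the tensor factors of $V^{\otimes m}$ (where $m = |\partial_- D'|$), and conjugating $\phi$ by such a permutation, then taking the trace, leaves $\mathrm{Tr}(\phi)$ unchanged by the cyclic property of the trace. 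For a Dehn twist along the meridian one first applies an element of $\mathrm{SL}(2,\Z)$ (handled by the other twist and isotopies, or by choosing the covering map $\pi$ so that the meridian becomes the longitude, which is legitimate precisely because — as the paper is about to conclude — $\Phi_\pi$ is independent of $\pi$; to avoid circularity I would instead just redo the cut-and-reglue argument with the roles of the two sides of $I^2$ interchanged, which is symmetric).

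Alternatively — and this may be cleaner to write — one can argue directly on $T^2$ without passing to the cylinder: a Dehn twist supported in an annular neighborhood $A$ of an essential simple closed curve changes $D$ only by inserting, in the strip $D \cap A$, a sequence of moves each of which slides one strand across the others and across the gluing locus. Each such elementary slide is one of the local tangle moves of Figure~\ref{fig:tangle_moves} applied biperiodically — in particular a sequence of Yang–Baxter-type moves (b) and cap/cup moves (c)–(e) — so invariance again follows from Theorem~\ref{thm:invariance}. The only subtlety is that a Dehn twist is \emph{not} an ambient isotopy of $T^2 \times I$, so one cannot simply quote Theorem~\ref{thm:invariance}; rather, one must verify that the twist genuinely decomposes into those local moves (together with the permutation-of-strands identification across the gluing line, handled by the trace argument above), which is where I expect the bookkeeping to be delicate.

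The main obstacle is precisely this last point: making rigorous that the Dehn twist, which is a nontrivial mapping class, acts on the state sum / trace only through a cyclic permutation of tensor factors (equivalently, only through the local moves already shown to preserve $\Phi_\pi$). Once the twist is written as "regluing with a shift," the invariance is the cyclicity of the trace; the work is in verifying that the shift-by-a-strand really does correspond to conjugating $\phi$ by a permutation operator and does not interact with the classical critical points in any other way. Everything else — the isotopy case, the reduction to generators of $\mathrm{SL}(2,\Z)$ — is routine given Theorem~\ref{thm:invariance} and the machinery of Lemma~\ref{thm:basis_invar}.
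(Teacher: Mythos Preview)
Your reduction to generators of $\mathrm{Mod}(T^2)$ is right, and matches the paper. But from there the paper takes a much shorter and more concrete route than either of your alternatives, and your main approach has a genuine gap.

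The paper does not use the trace model at all. It simply analyzes what a Dehn twist does to the set of critical points of $D$ with respect to the fixed height function $(x,y)\mapsto y$. Take $l$ lifting to a vertical line and a thin bicollar $N(l)$ containing no critical points, so each arc of $D\cap N(l)$ crosses $l$ once, monotonically in $x$. Under an upward twist $f(x,y)=(x,\,y+\theta(x))$, an arc that was increasing in $y$ stays increasing (no new critical points), while an arc decreasing in $y$ acquires exactly one local minimum and one local maximum and nothing else. Since $f$ is a homeomorphism it creates no new crossings. Thus the only difference between the state sums for $D$ and $D'=f(D)$ is a collection of adjacent cup--cap pairs, and these cancel by the zigzag identity (move (c) of Figure~\ref{fig:tangle_moves}). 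The twist about a horizontal line is even easier: there $f(x,y)=(x+\theta(y),\,y)$ leaves the $y$-coordinate untouched, so no maxima or minima are created or destroyed and the state sum is literally unchanged. That is the whole proof.

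Your trace/permutation argument, by contrast, runs into the circularity you yourself flag and do not actually escape. ``Interchanging the roles of the two sides of $I^2$'' means building the virtual tangle with the height function in $x$ rather than $y$; the resulting trace computes $\Phi_{\pi'}$ for a different covering map $\pi'$, and equating that with $\Phi_\pi$ is exactly Corollary~\ref{thm:lift_choice}, which the paper deduces \emph{from} this proposition. There is also a more local problem: even for the ``good'' twist direction, inserting the twist at height $y_0$ with $\phi=\phi_2\circ\phi_1$ gives $\phi'=\phi_2\circ\sigma\circ\phi_1$, and $\mathrm{Tr}(\phi_2\sigma\phi_1)$ is not $\mathrm{Tr}(\phi_2\phi_1)$ by any cyclicity argument; you would need $\sigma$ to appear as a genuine conjugation, which it does not. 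Your second alternative---decompose the twist into local tangle moves on $T^2$---is headed toward the paper's argument, but the paper's observation that the \emph{only} local change is a cancelling cup--cap pair is what makes it work, and that is the step your sketch is missing.
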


\begin{proof}
Since $\Phi_\pi$ is an isotopy invariant, it suffices to prove the theorem for a set of homeomorphisms generating the mapping class group $\text{Mod}(T^2)$. To this end, we consider two Dehn twists, about two curves in $T^2$ which lift via $\pi$ to horizontal and vertical lines in $\R^2$. Let $l \subset T^2$ be a simple closed curve lifting to a vertical line in $\R^2$ such that $l$ contains no critical points of $D$, and choose a bicollar neighborhood $N(l)$ of $l$ satisfying the following conditions:
\begin{enumerate}[label=(\roman*)]
\item No critical points of $D$ occur within $N(l)$.
\item Each connected component of $D \cap N(l)$ intersects $l$ only once, transversely.
\end{enumerate}
Now suppose $f : T^2 \to T^2$ is an upward twist (from left to right) about $l$ which is the identity outside of $N(l)$. See Figure \ref{fig:twist} for an example. Let $c$ be a component of $D \cap N(l)$---then $c$ is a curve which increases or decreases monotonically as it travels across $N(l)$ from left to right. If $c$ is increasing, $f(c)$ is also monotonically increasing and contains no critical points. If $c$ is decreasing, $f(c)$ contains a minimum to the left of $l$ and a maximum to the right of $l$ and no critical points other than these (see Figure \ref{fig:twist}). The cases of twisting downward and twisting about horizontal lines are similar. Finally, because $f$ is injective, the crossing points of $D'$ are the same as those of $D$.

\begin{figure}[H]
\centering
\includegraphics[height=4cm]{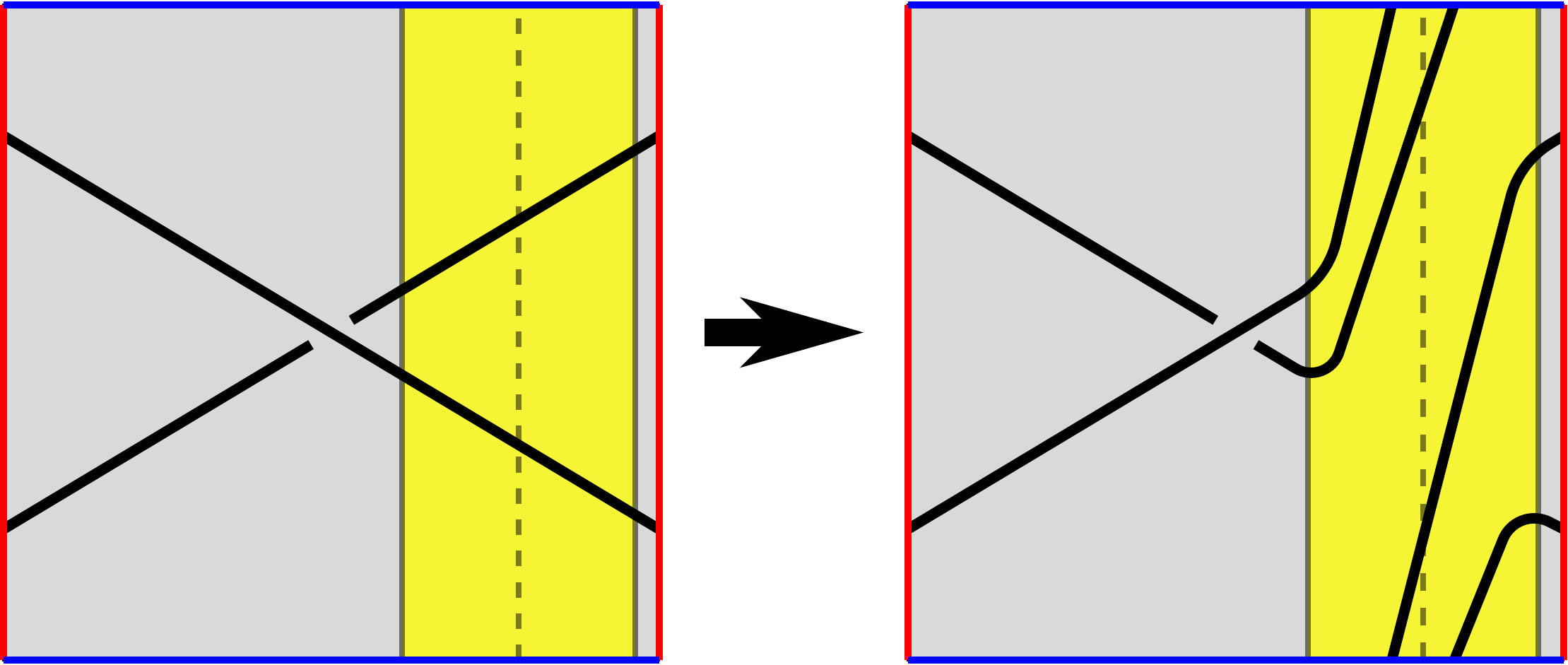}
\caption{Twisting a diagram}
\label{fig:twist}
\end{figure}

It follows that the only critical points of $f(D) = D'$ which do not occur in $D$ are max-min pairs formed as above. When $\Phi_\pi(D')$ is computed as a state sum, the weights of these max-min pairs cancel as in identity (c) of Figure \ref{fig:tangle_moves}. We conclude $\Phi_\pi(D) = \Phi_\pi(D')$.
\end{proof}

\begin{cor}
\label{thm:lift_choice}
The value $\Phi_\pi(D)$ does not depend on the choice of covering map $\pi : \R^2 \to T^2$.
\end{cor}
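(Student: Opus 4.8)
The plan is to reduce the statement to Proposition \ref{thm:dehn} by showing that any two admissible covering maps differ by an orientation-preserving diffeomorphism of $T^2$. Let $\pi_1, \pi_2 : \R^2 \to T^2$ both be smooth, orientation-preserving covering maps with fundamental domain $I^2$ and deck transformations generated by the horizontal and vertical unit shifts. Since both deck groups are the standard $\Z^2$ acting by integer translations, each $\pi_i$ factors as $\pi_i = \bar{\pi}_i \circ q$, where $q : \R^2 \to \R^2/\Z^2$ is the standard quotient and $\bar{\pi}_i : \R^2/\Z^2 \to T^2$ is a diffeomorphism; as $q$ and $\pi_i$ preserve orientation, so does $\bar{\pi}_i$. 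Setting $h = \bar{\pi}_2 \circ \bar{\pi}_1^{-1}$ gives an orientation-preserving diffeomorphism $h : T^2 \to T^2$ with $\pi_2 = h \circ \pi_1$.

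Next I would observe that $\Phi_\pi(D)$ depends on $\pi$ only through the lifted diagram $\pi^{-1}(D) \subset \R^2$ together with the local tangle structure it inherits from sections of $\pi$. From $\pi_2 = h \circ \pi_1$ we get $\pi_2^{-1}(D) = \pi_1^{-1}(h^{-1}(D))$ as subsets of $\R^2$, so whenever $D$ is a torus diagram with respect to $\pi_2$ the set $\pi_1^{-1}(h^{-1}(D))$ again has isolated local extrema and crossings, i.e.\ $h^{-1}(D)$ is a torus diagram with respect to $\pi_1$. Choosing the local sections that define the neighborhoods $T(p)$ for $\Phi_{\pi_1}(h^{-1}(D))$ to be the $h$-images of those used for $\Phi_{\pi_2}(D)$, the elementary tangle diagrams, the operators $\mathcal{F}_{T(p)}$, the states, and all the weights agree term by term; hence $\Phi_{\pi_2}(D) = \Phi_{\pi_1}(h^{-1}(D))$.

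Finally I would apply Proposition \ref{thm:dehn} to the orientation-preserving diffeomorphism $h$ and the diagrams $h^{-1}(D)$ and $D = h(h^{-1}(D))$ --- both torus diagrams with respect to $\pi_1$ --- to get $\Phi_{\pi_1}(h^{-1}(D)) = \Phi_{\pi_1}(D)$, and chaining with the previous paragraph gives $\Phi_{\pi_2}(D) = \Phi_{\pi_1}(D)$. (If $D$ fails to be a torus diagram with respect to one of the $\pi_i$, a preliminary small isotopy makes it one with respect to both, with no effect on $\Phi_{\pi_i}$ by Theorem \ref{thm:invariance}, so this entails no loss of generality.) The main obstacle is really just careful bookkeeping: verifying that two admissible covering maps differ by an \emph{orientation-preserving} self-diffeomorphism of $T^2$ --- orientation genuinely matters here, since the construction of $\Phi_\pi$ treats the ``back'' of the torus with the transposition map --- and checking that the identification $\pi_2^{-1}(D) = \pi_1^{-1}(h^{-1}(D))$ respects the local tangle data closely enough that the state sum need not be recomputed. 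Neither point is deep, but both require the definitions of $\Phi_\pi$ and of a torus diagram in hand.
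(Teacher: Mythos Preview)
Your proof is correct and follows essentially the same route as the paper: construct an orientation-preserving self-homeomorphism of $T^2$ intertwining the two covering maps, observe that $\Phi_\pi(D)$ depends only on the lifted diagram, and then invoke Proposition~\ref{thm:dehn}. The paper obtains the homeomorphism via the lifting property of covering spaces (producing $\tilde f:\R^2\to\R^2$ and then descending), whereas you factor each $\pi_i$ through the standard quotient $q:\R^2\to\R^2/\Z^2$; these are cosmetic variants of the same argument.
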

\begin{proof}
Let $\pi_1, \pi_2 : \R^2 \to T^2$ be two smooth, orientation-preserving covering maps with fundamental domain $I^2$. The uniqueness property of covering spaces gives an orientation-preserving homeomorphism $\tilde{f}: \R^2 \to \R^2$ which satisfies $\pi_2 \circ \tilde{f} = \pi_1$, and since $\pi_1$ and $\pi_2$ have the same fundamental domain and deck transformations, $\tilde{f}$ descends to an orientation-preserving homeomorphism $f$ of $T^2$ satisfying $f \circ \pi_1 = \pi_2$. By Proposition \ref{thm:dehn}, and noting the value $\Phi_{\pi_i}(D)$ is completely determined by the lift ${\pi_i}^{-1}(D)$,
$$
\Phi_{\pi_1}(D) = \Phi_{\pi_1}(f(D)) = \Phi_{f \circ \pi_1}(D) = \Phi_{\pi_2}(D).
$$
\end{proof}

We conclude the section with a general property of pseudo-operator invariants. Though the proposition is a simple observation, it motivates the skein theory to come in Section 5.

\begin{prop}
\label{thm:why_two}
Let $K \subset T^2 \times I$ be a knot projecting to an essential, simple closed curve in $T^2 \times I$ with framing parallel to $T^2 \times \{0\}$. If $K$ is colored by an $n$-dimensional $\mathscr{A}$-module $V$,
$$
\Phi(K) = n.
$$
\end{prop}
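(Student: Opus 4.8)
The plan is to exploit the homeomorphism-invariance of $\Phi$ to replace $K$ by a standard model and then simply read off the state sum from a crossingless diagram. First I would recall that $\text{Mod}^+(T^2) \cong SL(2,\Z)$ acts transitively on isotopy classes of essential simple closed curves in $T^2$: a primitive vector $(a,b) \in \Z^2$ completes to a matrix of $SL(2,\Z)$, so there is an orientation-preserving linear homeomorphism of $\R^2$ carrying the line of slope $b/a$ to the $y$-axis, and this descends to an orientation-preserving homeomorphism $f$ of $T^2$ taking the projection of $K$ to the standard longitude $\lambda = \{x = \tfrac12\} \subset T^2$. Invoking Theorem~\ref{thm:invariance} (diagram/isotopy invariance), Proposition~\ref{thm:dehn} (invariance under orientation-preserving homeomorphisms of $T^2$), and Corollary~\ref{thm:lift_choice} (independence of $\pi$), this reduces the proposition to the single case where $K = K_0$ is the $V$-colored knot in $T^2 \times I$ lying over $\lambda$ with blackboard framing.

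Next I would compute $\Phi(K_0)$ by hand. Take the diagram $D_0 = \lambda$ and the covering $\pi$ of the setup; then $\pi^{-1}(D_0)$ is a disjoint union of vertical lines, each strictly monotone for the height function $(x,y)\mapsto y$, so $D_0$ has no crossings and no local extrema and the critical set $P$ is empty. Consequently $D_0 \setminus P$ consists of the single circle $D_0$, colored by $V$; a state $\sigma$ is therefore a single choice of basis element of $B_V$ (or of $B_{V^*}$, according to the orientation of $K_0$), and $\omega(\sigma)$ is the empty product, hence $1$. Thus $\Phi_\pi(D_0) = \sum_\sigma 1 = \dim_\C V = n$. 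As a cross-check one can instead run the alternate construction of Lemma~\ref{thm:basis_invar}: the resulting virtual tangle diagram $D_0'$ is a single straight strand with operator $\phi = \id_V$, so $\Phi_\pi'(D_0') = \text{Tr}(\id_V) = n$. This confirms $\Phi(K) = \Phi(K_0) = n$.

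I do not expect a substantive obstacle: once the correct representative diagram is chosen the statement is immediate, and the real content is the reduction and a little bookkeeping. The one point needing care is the meridian $(1,0)$, whose naive straight representative is a horizontal circle, none of whose points are isolated critical points, so it is not literally a torus diagram; this is precisely why passing to $\lambda$ via Proposition~\ref{thm:dehn} is convenient (equivalently, one may perturb the meridian to create a single max–min pair and cancel it using relation (c) of Figure~\ref{fig:tangle_moves}). I would also remark that the orientation of $K$ is irrelevant since $\dim_\C V = \dim_\C V^* = n$, and that the above reading of the state sum is exactly the assertion that a closed component with empty critical set contributes $\text{Tr}(\id_V)$ — the consistency check against Lemma~\ref{thm:basis_invar} makes this precise.
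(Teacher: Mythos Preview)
Your proposal is correct and follows essentially the same approach as the paper: reduce via Proposition~\ref{thm:dehn} to the case where $K$ lifts to a vertical line, then read off $\Phi(K) = \text{Tr}(\id_V) = n$ (the paper cites the construction in Lemma~\ref{thm:basis_invar} for this last step, which is precisely your cross-check). Your write-up is simply more expansive, spelling out the transitivity of $SL(2,\Z)$ on primitive classes and giving the direct state-sum reading in addition to the trace interpretation; the observation about the meridian not being a torus diagram in its horizontal position is a nice clarifying remark.
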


\begin{proof}
Applying Proposition \ref{thm:dehn}, we may assume without loss of generality that $K$ lifts to a vertical line in $\R^2$. Then the proof of Lemma \ref{thm:basis_invar} shows $\Phi(K)$ is the trace of the identity map of $V$, so $\Phi(K) = \dim(V) = n$.
\end{proof}

\section{Quantum Invariants for $sl(2, \C)$ and the Toroidal Colored Jones Polynomial}

\subsection{An Invariant of Framed, Unoriented Links in $T^2 \times I$}

As with the colored Jones polynomial, we now specialize to $\mathscr{A} = \mathscr{A}_q = \mathcal{U}_q(sl(2, \C))$, $q = e^{2\pi i/r}$, and limit $\mathscr{A}$-modules to the set $\{V^1, \dots, V^r\}$ as in Section 2.2. For a link $L \subset T^2 \times I$ with $k$ components and multi-integer ${\bf n} = (n_1, \dots, n_k)$, let $L({\bf n}) = L(n_1, \dots, n_k)$ be the $V$-colored link $L$ with $j$th component colored by $V^{n_j}$. By $1 \leq {\bf n} \leq r$ in the definition below we mean that $1 \leq n_j \leq r$ for all $n_j$.

\begin{defn}
Given a framed, unoriented link $L \subset T^2 \times I$ with $k$ components, fix an orientation of each component. For $q = e^{2\pi i/r}$, $1 \leq {\bf n} \leq r$, define $\hat{J}^T_{{\bf n}, q}(L)$ by
$$
\hat{J}^T_{{\bf n}, q}(L) = \Phi^{\mathscr{A}_q, R, \mu}(L({\bf n})).
$$
\end{defn}

\begin{thm}
\label{thm:orientation}
$\hat{J}^T_{{\bf n}, q}(L)$ is an invariant of framed, unoriented links in $T^2 \times I$. That is, $\hat{J}^T_{{\bf n}, q}(L)$ does not depend on the orientation chosen for each component of $L$.
\end{thm}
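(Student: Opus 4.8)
The plan is to prove the apparently special case that reversing the orientation of a \emph{single} component leaves $\hat J^T_{{\bf n},q}$ unchanged; the full statement then follows by reversing the components of $L$ one at a time. So fix a diagram $D\subset T^2$ of $L$, fix a component $K$ of $L$ with color $V^{n_K}$, and let $D^{\mathrm r}$ be $D$ with the orientation of $K$ reversed; the goal is $\Phi^{\mathscr A_q,R,\mu}(D)=\Phi^{\mathscr A_q,R,\mu}(D^{\mathrm r})$. I would also introduce the diagram $D^{\mathrm r*}$ obtained from $D^{\mathrm r}$ by additionally recoloring $K$ with the dual module $(V^{n_K})^{*}$ (which is again one of the allowed colors, since $V^{n_K}$ will turn out to be self-dual), and factor the argument as $\Phi(D)=\Phi(D^{\mathrm r*})$ followed by $\Phi(D^{\mathrm r*})=\Phi(D^{\mathrm r})$.

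The heart of the matter is the first equality, $\Phi^{\mathscr A_q,R,\mu}(D)=\Phi^{\mathscr A_q,R,\mu}(D^{\mathrm r*})$: reversing the orientation of an edge of a colored diagram while simultaneously dualizing its color should not change the invariant. For honest tangle operators this is the standard ``duality move'' underlying the Reshetikhin--Turaev construction \cite{rt90,km91,t94}, and I would transport it to the pseudo-operator setting through the state sum (equivalently, through the trace description $\Phi_\pi(D)=\mathrm{Tr}(\phi)$ from the proof of Lemma~\ref{thm:basis_invar}). Concretely, every critical point $p$ meeting $K$ has a small elementary-tangle neighborhood $T(p)$, and from the defining data of Theorem~\ref{thm:fund_tangles}---the relations among $\breve R$, the (co)evaluations $E, E_\mu, N, N_{\mu^{-1}}$, and the good unit $\mu$---one extracts a local identity matching the matrix entries of $\mathcal F_{T(p)}$ with those of its orientation-reversed, color-dualized counterpart, once the dual basis of $B_{V^{n_K}}$ is used as the basis of $(V^{n_K})^{*}$ and $(V^{n_K})^{**}$ is identified with $V^{n_K}$ canonically. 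Under these identifications the states of $D$ correspond bijectively to the states of $D^{\mathrm r*}$ with equal weights, so the two state sums coincide. The seam introduced in the proof of Lemma~\ref{thm:basis_invar} causes no trouble: a virtual crossing is always assigned the transposition $\tau$, independently of the orientations and colors of the two strands through it, so it is untouched by the operation.

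For the second equality I would invoke that for $\mathscr A_q=\mathcal U_q(sl(2,\C))$ at $q=e^{2\pi i/r}$ every module $V^{n}$, $1\le n\le r$, is self-dual, i.e.\ $(V^{n})^{*}\cong V^{n}$ as an $\mathscr A_q$-module, inherited from the self-duality of the $n$-dimensional irreducibles of $sl(2,\C)$. Since $\Phi^{\mathscr A_q,R,\mu}$ depends on the colors only up to isomorphism---transport the chosen basis along such an isomorphism and every critical-point weight in the state sum is literally unchanged, so this is immediate from Lemma~\ref{thm:basis_invar}---we get $\Phi(D^{\mathrm r*})=\Phi(D^{\mathrm r})$. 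Combining the two equalities gives $\Phi^{\mathscr A_q,R,\mu}(D^{\mathrm r})=\Phi^{\mathscr A_q,R,\mu}(D)$, which is exactly the orientation-independence of $\hat J^T_{{\bf n},q}(L)$ in the component $K$; iterating over components finishes the proof. I expect the main obstacle to be the first equality: stating and verifying the local duality identity for each of the elementary tangles and confirming its compatibility with the state-sum bookkeeping---in particular the treatment of upward strands, whose states take values in dual bases, and the interaction with the transposition maps at the seam. The reduction to a single component, the self-duality of $V^{n}$, and the isomorphism-invariance of $\Phi$ are all routine given the machinery already in place.
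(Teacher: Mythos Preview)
Your proposal is essentially correct and follows the same strategy as the paper: reduce to reversing a single component, invoke self-duality of $V^{n}$, and compare the elementary-tangle operators locally at each critical point via the results of \cite{km91}. The difference is organizational. You factor the comparison as $D\to D^{\mathrm r*}\to D^{\mathrm r}$ and assert each step is sign-free; the paper instead compares $D$ and $D^{\mathrm r}$ directly, citing Lemma~3.18 and Remark~3.26 of \cite{km91}, which say that after identifying $T(p)_\pm$ via the self-duality $\varphi:(V^n)^*\to V^n$ the operators agree at all critical points when $n$ is odd, while for $n$ even they agree at crossings but differ by a sign at each extremum. The paper then finishes with the topological observation that a closed curve in $T^2$ has an even number of extrema, so the signs cancel.

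The point worth flagging is that your ``sign-free'' step~1 is exactly where this parity issue is hiding. When you write down the local identity at a cap or cup, the canonical identification $(V^{n})^{**}\cong V^{n}$ you invoke must be reconciled with the pivotal identification supplied by $\mu$; for even $n$ the Frobenius--Schur indicator of $V^{n}$ is $-1$, and this discrepancy is what produces the sign at each extremum in \cite{km91}. So either your local verification at extrema will cleanly absorb this into the pivotal structure (in which case you should say how), or it will produce the same signs, and you will need the same even-count-of-extrema argument the paper uses. Either way the proof goes through; just be aware that the ``main obstacle'' you anticipate is precisely the place where the paper's parity argument lives.
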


\begin{proof}
Let $D, D' \subset T^2$ be oriented diagrams of $L({\bf n})$ with framing parallel to $T^2 \times \{0\}$ and $D'$ obtained from $D$ by changing the orientation of a link component $C$. It suffices to show
$$
\Phi^{\mathscr{A}_q, R, \mu}(D) = \Phi^{\mathscr{A}_q, R, \mu}(D').
$$

Suppose $C$ is colored by $V^n$ and let $C'$ be the corresponding component of $D'$, also colored by $V^n$. Let $p \in C$ be a critical point, $p'$ the same point of $C'$, and $T(p)$, $T(p')$ small rectangular neighborhoods of each. Then each copy of $V^n$ coming from $C$ in $T(p)_\pm$ corresponds to a copy of $(V^n)^*$ in $T(p')_\pm$ and vice versa. $V^n$ is self-dual as an $\mathscr{A}$-module via a canonical isomorphism $\varphi : (V^n)^* \to V^n$, and we use $\varphi$ to identify the modules ${T(p)}_\pm$ and $T(p')_\pm$.

We apply Lemma 3.18 and Remark 3.26 of \cite{km91}, which state the following: if $n$ is odd, $\mathcal{F}_{T(p)} = \mathcal{F}_{T(p')}$ as maps from ${T(p)}_-$ to ${T(p)}_+$ for all $p$. Thus $\Phi(D) = \Phi(D')$ if $n$ is odd. Suppose $n$ is even. If $p$ is a crossing then $\mathcal{F}_{T(p)} = \mathcal{F}_{T(p')}$, and if $p$ is an extreme point then $\mathcal{F}_{T(p)} = -\mathcal{F}_{T(p')}$. Self-duality of $V^n$ induces a bijection between the states of $D$ and the states of $D'$, and it follows that if $\sigma$ is a state of $D$ and $\sigma'$ the corresponding state of $D'$, $\omega(\sigma) = (-1)^j \omega(\sigma')$, where $j$ is the total number of extreme points of $C$. Since $C \subset T^2$ is a closed curve, $j$ is even and $\omega(\sigma) = \omega(\sigma')$. We conclude $\Phi(D) = \Phi(D')$ if $n$ is even.
\end{proof}

The invariant $\hat{J}^T_{{\bf n}, q}$ should be thought of as a toroidal analogue of the invariant $J_{L, {\bf k}}$ of \cite{km91}. One might also be reminded of the Kauffman bracket skein module, another invariant of framed, unoriented links---this comparison will be made precise in the next section. Like the invariant $J_{L, {\bf k}}$ of \cite{km91} or the Kauffman bracket skein module of $S^3$, $\hat{J}^T_{{\bf n}, q}$ can be normalized to obtain an invariant of oriented, unframed links in $T^2 \times I$ analogous to the colored Jones polynomial.

\subsection{The Toroidal Colored Jones Polynomial}

To create an invariant of unframed links, we use the fact that as an endomorphism of $V^n \otimes V^n$, $0 \leq n \leq r$, the $R$-matrix $R$ satisfies \cite{km91, t88}
$$
(\id \otimes E_\mu)(R^{\pm 1} \otimes \id)(\id \otimes N) = q^{\pm\frac{(n^2 - 1)}{4}} \id
$$
where $N : \C \to V^n \otimes (V^n)^*$ and $E_\mu : V^n \otimes (V^n)^* \to \C$ are the maps given in Theorem \ref{thm:fund_tangles}. Pictorially, this is equivalent to the two equations
\begin{align}
\label{eq:kinks}
\hat{J}^T_{{\bf n}, q}(\dpcurl) = q^{(n^2 - 1)/4} \cdot \hat{J}^T_{{\bf n}, q}(\dstrand) & & \hat{J}^T_{{\bf n}, q}(\dncurl) = q^{-(n^2 - 1)/4}\cdot \hat{J}^T_{{\bf n}, q}(\dstrand)
\end{align}
where the diagrams represent oriented, unframed links and the component shown is colored by $V^n$.

If $L \subset T^2 \times I$ is oriented and unframed, any two diagrams of $L$ are related by a sequence of the moves in Figure \ref{fig:tangle_moves} and additions or removals of curls as shown in (\ref{eq:reid_1}) below.
\begin{equation}
\label{eq:reid_1}
\dpcurl \leftrightarrow \dstrand \leftrightarrow \dncurl
\end{equation}
This fact, combined with (\ref{eq:kinks}), motivates Definition \ref{def:complicated_cjp}. Similar to above, given a diagram $D \subset T^2$ of a $k$-component link and multi-integer ${\bf n} = (n_1, \dots, n_k)$, let $D({\bf n})$ indicate $D$ with $j$th component colored by $V^{n_j}$. Define $\hat{J}^T_{{\bf n}, q}(D) := \Phi^{\mathscr{A}_q, R, \mu}(D({\bf n}))$.

\begin{defn}
\label{def:complicated_cjp}
Let $L \subset T^2 \times I$ be an oriented, unframed link with $k$ components, $D \subset T^2$ a diagram of $L$, and $C_1, \dots, C_k$ the link components of $D$. Define $J^T_{{\bf n},q}(L)$ by
$$
J^T_{{\bf n}, q}(L) = q^{\alpha/4} \hat{J}^T_{{\bf n}, q}(D)
$$
where $\alpha = -\sum_{j = 1}^k w(C_j) \cdot ({n_j}^2 - 1)$ and $w(C_j)$ is the writhe of $C_j$, i.e.~the sum of the signs of its self-crossings.
\end{defn}

It follows from (\ref{eq:kinks}) and the proof of Theorem \ref{thm:invariance} that $J^T_{{\bf n}, q}(L)$ is an invariant of oriented, unframed links in $T^2 \times I$.

If all components of $L$ are given the same color, i.e.~if ${\bf n} = (n, \dots, n)$ for some $n \in \N$, we can define a similar invariant which agrees with $J^T_{{\bf n}, q}$ if $L$ is a knot. This next definition is our analogue of the colored Jones polynomial.

\begin{defn}
\label{def:simple_cjp}
For an oriented, unframed link $L \subset T^2 \times I$ with diagram $D$ and $n \in \N$, Define the \emph{$n$th toroidal colored Jones polynomial} $J^T_n(L; q)$ of $L$ by
$$
J^T_n(L;q) = (q^{(n^2 - 1)/4})^{-w(D)} \hat{J}^T_{(n, \dots, n), q}(D),
$$
where $w(D)$ is the writhe of $D$.
\end{defn}

Compare Definition \ref{def:simple_cjp} with (\ref{eq:tangle_jones})---the definitions are analogous except for a factor of $1/[n]$. This factor may be included in the definition of $J_n$ because $J_n$ is always divisible by $[n]$ as a Laurent polynomial in $q$; this follows from the $\mathscr{A}$-linearity of the operator in Theorem \ref{thm:fund_tangles}. Because there is no guarantee of global $\mathscr{A}$-linearity in the construction of $J^T_n$, we cannot divide by $[n]$. In particular, if $q = e^{2\pi i/n}$,
\begin{equation}
\label{eq:zero_n}
[n] = \frac{e^{\pi i} - e^{-\pi i}}{\{1\}} = 0.
\end{equation}

Additionally, the root of unity $q$ may be replaced by an indeterminate in the definition of $J^T_n(L; q)$ without affecting calculations---see, for example, \cite{my18}. This justifies thinking of $J^T_n(L; q)$ as a Laurent polynomial in $q$ and accounts for our slight change in notation. We prefer $J^T_n$ to $J^T_{{\bf n}, q}$ for simplicity in calculations, and don't make use of $J^T_{{\bf n}, q}$ outside of this section. We also remark that, because Dehn twists do not affect the signs of crossings in a diagram, Proposition \ref{thm:dehn} extends to $J^T_n$ and $J^T_{{\bf n}, q}$:

\begin{prop}
\label{thm:follow_up}
Let $L, L' \subset T^2 \times I$ be oriented links with respective diagrams $D, D' \subset T^2$. If $f$ is an orientation-preserving homeomorphism of $T^2$ satisfying $f(D) = D'$, then $J^T_n(L;q) = J^T_n(L';q)$ and $J^T_{{\bf n}, q}(L) = J^T_{{\bf n}, q}(L')$ for all $n$ and ${\bf n}$.
\end{prop}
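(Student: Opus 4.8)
The plan is to reduce the statement to Proposition~\ref{thm:dehn}, which we have already established for the pseudo-operator invariant $\Phi_\pi$, and then transfer that invariance through the normalizations in Definitions~\ref{def:complicated_cjp} and \ref{def:simple_cjp}. First I would observe that $J^T_n$ and $J^T_{{\bf n}, q}$ are both built from $\hat{J}^T_{{\bf n}, q}(D) = \Phi^{\mathscr{A}_q, R, \mu}(D({\bf n}))$ by multiplying by a power of $q^{1/4}$ whose exponent depends only on the writhes of the components (or of the whole diagram) and on the colors. Since Proposition~\ref{thm:dehn} gives $\Phi_\pi(D({\bf n})) = \Phi_\pi(D'({\bf n}))$ whenever $D' = f(D)$ for an orientation-preserving homeomorphism $f$ of $T^2$, it suffices to check that the normalizing exponents agree for $D$ and $D'$.

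The key point is the remark already made in the text: an orientation-preserving homeomorphism of $T^2$ does not change the signs of the crossings of a diagram. Concretely, if $f$ is isotopic to the identity then $D$ and $D'$ are related by an isotopy and the writhes are literally unchanged; if $f$ is (a composition of) Dehn twists, then by the analysis in the proof of Proposition~\ref{thm:dehn} the only new critical points introduced are max-min pairs, and these are local extrema, not crossings—so no crossing is created, destroyed, or has its sign altered. Hence $w(C_j)$ is the same for each component $C_j$ of $D$ as for the corresponding component $C_j'$ of $D'$, and likewise $w(D) = w(D')$. Therefore the exponent $\alpha = -\sum_j w(C_j)(n_j^2 - 1)$ in Definition~\ref{def:complicated_cjp}, and the exponent $-w(D)(n^2-1)$ in Definition~\ref{def:simple_cjp}, are unchanged under $f$.

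Putting these together: since $f$ preserves orientation and matches up the components of $D$ with those of $D'$ (the induced coloring is the obvious one), we get
$$
J^T_{{\bf n}, q}(L') = q^{\alpha(D')/4}\,\hat{J}^T_{{\bf n}, q}(D') = q^{\alpha(D)/4}\,\Phi_\pi(D'({\bf n})) = q^{\alpha(D)/4}\,\Phi_\pi(D({\bf n})) = J^T_{{\bf n}, q}(L),
$$
using Proposition~\ref{thm:dehn} for the middle equality, and the identical argument with $\alpha$ replaced by $-w(D)(n^2-1)$ gives $J^T_n(L';q) = J^T_n(L;q)$. Finally, since Proposition~\ref{thm:dehn} holds for a generating set of $\mathrm{Mod}(T^2)$ and $J^T_n$, $J^T_{{\bf n},q}$ are isotopy invariants, the equality extends to an arbitrary orientation-preserving homeomorphism $f$.

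I do not expect any serious obstacle here: the only thing that needs care is verifying that the writhe data entering the normalization is genuinely homeomorphism-invariant, and that follows immediately from the observation that Dehn twists create only extrema, not crossings. The statement is essentially a bookkeeping corollary of Proposition~\ref{thm:dehn}.
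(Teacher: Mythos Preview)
Your proposal is correct and follows exactly the paper's approach: the paper does not give a separate proof of this proposition but simply remarks, immediately before stating it, that ``because Dehn twists do not affect the signs of crossings in a diagram, Proposition~\ref{thm:dehn} extends to $J^T_n$ and $J^T_{{\bf n}, q}$.'' Your write-up is just a careful unpacking of that one-line justification, verifying that the writhe-dependent normalizations in Definitions~\ref{def:complicated_cjp} and~\ref{def:simple_cjp} are preserved.
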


As a final result of the subsection, we give a cabling formula for $\hat{J}^T_{{\bf n},q}$ analogous to the cabling formula for the invariant $J_{L, {\bf k}}$ (see \cite[Thm.~4.15]{km91}). For a framed, unoriented link $L \subset T^2 \times I$, this expresses the value $\hat{J}^T_{{\bf n},q}(L)$ in terms of $\hat{J}^T_{{\bf 2},q} = \hat{J}^T_{(2,\dots, 2),q}$ evaluated on certain cablings of $L$. In the next section, this will allow us to develop $\hat{J}^T_{{\bf n},q}$ from a skein-theoretic viewpoint.

Let $L$ be a $k$-component link and ${\bf n} = (n_1, \dots, n_k)$ a multi-integer. As in Section 2.1, denote by $L^{\bf n}$ the cabling of $L$ which replaces the $j$th component of $L$ by $n_j$ parallel push-offs of itself, oriented compatibly if the link is oriented, with associated diagram $D^{\bf n}$. Below, the sum is over all ${\bf i} = (i_1, \dots, i_k)$ with $1 \leq i_j \leq (n_j - 1)/2$, and $(-1)^{\bf i} \binom{{\bf n} - 1 - {\bf i}}{\bf i} = \prod_{j = 1}^k (-1)^{i_j} \binom{n_j - 1 - i_j}{i_j}$.

\begin{thm}[Cabling Formula]
\label{thm:cabling}
Let $L \subset (T^2 \times I)$ be a framed, unoriented link, ${\bf n}$ a coloring of $L$, and $D$ a torus diagram for $L$ with framing parallel to $T^2 \times \{0\}$. Then
\begin{align*}
\hat{J}^T_{{\bf n},q}(L) &= \sum_{{\bf i} = 0}^{({\bf n} - 1)/2} (-1)^{\bf i} \binom{{\bf n} - 1 - {\bf i}}{\bf i}\hat{J}^T_{{\bf 2},q}(L^{{\bf n} - 1 - 2{\bf i}}) \\
&=  \sum_{{\bf i} = 0}^{({\bf n} - 1)/2} (-1)^{\bf i} \binom{{\bf n} - 1 - {\bf i}}{\bf i} q^{3 w(D^{{\bf n} - 1 - 2{\bf i}})/4}J^T_2(L^{{\bf n} - 1 - 2{\bf i}}; q).
\end{align*}
\end{thm}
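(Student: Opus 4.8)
The plan is to reduce the cabling formula to the corresponding identity in the Kauffman bracket skein module of the annulus, exactly as in the classical case treated in \cite[Thm.~4.15]{km91}, and then transport it to the torus via the pseudo-operator construction. The crucial point is that every operation involved---cabling a component, changing a color, recoloring by a Chebyshev polynomial---is \emph{local} to a single component and its small tangle neighborhoods, so the fact that $\Phi$ does not globalize to an $\mathscr{A}$-linear operator on all of $T^2$ is irrelevant: we only need to understand how $\Phi$ behaves when one component, viewed through its collection of critical-point neighborhoods $T(p)$, is replaced by a cable. Since each $T(p)$ is an honest blackboard-framed tangle diagram in $I^2$, Theorem~\ref{thm:fund_tangles} applies verbatim to it, and the classical skein-theoretic manipulations go through locally inside each $T(p)$.

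First I would recall the classical fact (see \cite{km91, rt90}) that in the representation ring of $\mathscr{A}_q$, the $n$-dimensional irreducible $V^n$ corresponds to the Chebyshev polynomial $S_{n-1}$ evaluated on the class of $V^2$, and conversely
$$
S_{n-1}(z) = \sum_{i=0}^{(n-1)/2} (-1)^i \binom{n-1-i}{i} z^{n-1-2i}
$$
is the inverse change of basis expressing $S_{n-1}$ in terms of the monomials $z^m$ in $\mathscr{S}(\mathcal{A}) \cong \Z[A^{\pm 1}][z]$ (this is the standard inversion of the recursion \eqref{eq:chebyshev}). Next I would observe that the operator invariant of a $V^n$-colored strand equals the operator invariant of the same strand cabled by $n-1$ parallel $V^2$-colored copies and then decorated by the Jones--Wenzl-type idempotent; more precisely, $\mathcal{F}$ applied to a $V^n$-colored elementary tangle agrees with $\mathcal{F}$ applied to the $(n-1)$-cable composed with the appropriate projector, and expanding the projector in the monomial basis yields exactly the binomial coefficients above. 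Since this identity holds inside each tangle neighborhood $T(p)$, it holds for each weight $\omega_p(\sigma)$ in the state sum \eqref{eq:state_sum_1}--\eqref{eq:state_sum_2} defining $\Phi_\pi$, and multilinearity of the state sum over the basis choices lets me sum the contributions. Applying this to each of the $k$ components of $L$ independently (the cablings of distinct components are disjoint in disjoint regions, so the expansions multiply) gives
$$
\hat{J}^T_{{\bf n},q}(L) = \Phi^{\mathscr{A}_q,R,\mu}(L({\bf n})) = \sum_{{\bf i}=0}^{({\bf n}-1)/2} (-1)^{\bf i}\binom{{\bf n}-1-{\bf i}}{\bf i}\, \Phi^{\mathscr{A}_q,R,\mu}\big(L^{{\bf n}-1-2{\bf i}}({\bf 2})\big) = \sum_{{\bf i}=0}^{({\bf n}-1)/2} (-1)^{\bf i}\binom{{\bf n}-1-{\bf i}}{\bf i}\,\hat{J}^T_{{\bf 2},q}(L^{{\bf n}-1-2{\bf i}}),
$$
which is the first displayed equality. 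The second equality is then immediate from Definition~\ref{def:simple_cjp}: applying it with $n=2$ to each term gives $\hat{J}^T_{{\bf 2},q}(L^{{\bf n}-1-2{\bf i}}) = (q^{3/4})^{w(D^{{\bf n}-1-2{\bf i}})} J^T_2(L^{{\bf n}-1-2{\bf i}};q)$ since $(2^2-1)/4 = 3/4$, and substituting yields the $q^{3w(D^{{\bf n}-1-2{\bf i}})/4}$ factors.

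The main obstacle I anticipate is not conceptual but bookkeeping: one must check carefully that the ``recolor a strand by its $(n-1)$-cable plus projector'' identity is compatible with \emph{all} the elementary tangles---caps, cups, and crossings with both orientations---and in particular that the good unit $\mu$ and the duality maps $E_\mu, N_{\mu^{-1}}$ interact correctly with cabling, so that the local weights $\omega_p(\sigma)$ transform as claimed even at extrema. This is precisely the content of \cite[Sec.~4]{km91} in the classical setting, and since each $T(p)$ is a classical tangle diagram it carries over unchanged; the only new ingredient---that the global object is a state sum rather than a single trace---actually makes the argument \emph{easier}, because multilinearity of $\sum_\sigma \omega(\sigma)$ in the basis data is exactly what licenses expanding each cabled component as a linear combination of monomial cablings. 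I would also note that, as in the classical case, it suffices to prove the unframed ($J^T_2$) version on a specific convenient diagram and then invoke the framing-independence established via \eqref{eq:kinks} and the proof of Theorem~\ref{thm:invariance}.
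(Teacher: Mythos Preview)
Your proposal is correct and follows essentially the same route as the paper: both reduce the cabling formula to the Chebyshev/representation-ring identity $V^{n} = \sum_j (-1)^j\binom{n-1-j}{j}(V^2)^{\otimes(n-1-2j)}$ and then argue that this identity transfers to the pseudo-operator invariant because it can be checked locally at each critical-point neighborhood $T(p)$. The paper packages this local-to-global step as a separate lemma (Lemma~\ref{thm:operator_cabling}) asserting additivity of $\Phi$ under extensions of colors and its compatibility with tensor-cabling, proved by a direct state-sum argument---your ``multilinearity of the state sum'' remark is exactly this, though note that at a root of unity one needs extensions rather than direct sums (so the projector language is slightly loose, while the representation-ring formulation you also invoke is the robust one).
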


We sketch the proof, following closely the proof of Theorem 4.15 in \cite{km91}. We first require a lemma (c.f. \cite[Lem.~3.10]{km91}), which gives useful properties of pseudo-operator invariants.

\begin{lemma}
\label{thm:operator_cabling}
Let $(\mathscr{A}, R)$ be a quasitriangular Hopf algebra with good unit $\mu$. Let $D \subset T^2 \times I$ be a colored torus diagram and $C$ a link component of $D$ colored by $V$. 
\begin{enumerate}[label=(\alph*)]
\item If $V = X \oplus Y$, or more generally $V$ is an extension of $Y$ by $X$ (i.e.~there is a short exact sequence $0 \to X \to V \to Y \to 0$ of $\mathscr{A}$-modules), then
$$
\Phi(D) = \Phi(D_X) + \Phi(D_Y)
$$
where $D_Z$ denotes the torus diagram obtained by changing the color of $C$ to $Z$.

\item If $V = X \otimes Y$, then
$$
\Phi(D) = \Phi(D_{XY})
$$
where $D_{XY}$ is the diagram obtained by replacing $C$ by two parallel pushoffs of itself (using the framing) colored by $X$ and $Y$, respectively.
\end{enumerate}
\end{lemma}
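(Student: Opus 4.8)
The plan is to argue throughout from the state-sum description of $\Phi$ (the definitions (\ref{eq:state_sum_1})--(\ref{eq:state_sum_2})): for the color $V$ of the distinguished component $C$ we choose a basis adapted to the relevant sub/quotient or tensor structure, and then verify each identity one critical point at a time, following the proof of \cite[Lem.~3.10]{km91} for ordinary operator invariants.

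\emph{Part (a).} First treat the split case $V = X\oplus Y$. Take $B_V = B_X\sqcup B_Y$, a union of bases of the two summands, with dual basis $B_{V^*}=B_{X^*}\sqcup B_{Y^*}$, and call an arc of $C$ in $D\setminus P$ of \emph{type $X$} or \emph{type $Y$} according to which summand (or its dual) carries its state label. Because $X$ and $Y$ are $\mathscr{A}$-submodules of $V$ and every elementary operator $\mathcal{F}_{T(p)}$ of Theorem~\ref{thm:fund_tangles} is $\mathscr{A}$-linear, the type of an arc is unchanged as one follows $C$ through any critical point: at a crossing because $R^{\pm1}$ is a transposition composed with the action of an element of $\mathscr{A}\otimes\mathscr{A}$, so each submodule is carried along each strand; at a local extremum because $E,E_\mu,N,N_{\mu^{-1}}$ respect the pairing between $B_X$ (resp.\ $B_Y$) and its dual. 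Since $C$ is connected, every state $\sigma$ with $\omega(\sigma)\ne 0$ is then entirely of type $X$ or entirely of type $Y$. By naturality of $\mathcal{F}$ under the inclusions $X,Y\hookrightarrow V$, the type-$X$ states are in weight-preserving bijection with the states of $D_X$ and the type-$Y$ states with those of $D_Y$, and summing the two contributions gives $\Phi(D)=\Phi(D_X)+\Phi(D_Y)$. For a general extension $0\to X\to V\to Y\to 0$ one repeats this with $B_V = B_X\sqcup\widetilde{B}_Y$, where $B_X$ spans the submodule $X$ and $\widetilde{B}_Y$ lifts a basis of $Y=V/X$ (so dually $\mathrm{span}(\widetilde{B}_Y^\vee)=X^\perp\cong Y^*$ is a submodule of $V^*$). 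The operators are now only block-triangular with respect to the filtration $0\subseteq X\subseteq V$, but the nonzero-weight states still partition into a family agreeing with $D_X$ (the ``sub'' block, via $X\hookrightarrow V$) and a family agreeing with $D_Y$ (the ``quotient'' block, via $V\twoheadrightarrow Y$), which sum to $\Phi(D_X)$ and $\Phi(D_Y)$. Equivalently, using the trace description $\Phi(D)=\mathrm{Tr}(\phi)$ from the proof of Lemma~\ref{thm:basis_invar}, one may reduce to the split case by passing to the associated graded of a composition series of $V$ and invoking additivity of the trace.

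\emph{Part (b).} This is the pseudo-operator version of the standard fact that $\mathcal{F}$ sends an $(X\otimes Y)$-colored strand to two parallel strands colored $X$ and $Y$. I would prove the local statement: for each of the five elementary tangles, the operator attached to a single $(X\otimes Y)$-colored strand coincides---after the canonical identification of boundary modules---with the operator attached to the tangle in which that strand has been doubled into parallel $X$- and $Y$-colored copies; for $R^{\pm1}$ this is the naturality of $\breve R$ under the coproduct, and for the cups and caps it is the matching behaviour of $N,N_{\mu^{-1}},E,E_\mu$ under the coproduct together with the defining properties of the good unit $\mu$. Doubling $C$ throughout $D$, these local identities glue along the arcs of $D\setminus P$ and---in the virtual-diagram model of Lemma~\ref{thm:basis_invar}---along the added curves $c_q$ as well, because a virtual crossing of the $(X\otimes Y)$-strand is the transposition $\tau$, which on the tensor product in question equals the composite of the transpositions of the $X$- and $Y$-copies. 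Hence the two state sums agree term by term, so $\Phi(D)=\Phi(D_{XY})$.

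\emph{Main obstacle.} I expect the non-split case of part (a) to be the delicate point. For an ordinary operator invariant of a link in $S^3$, additivity over a short exact sequence of colors is formal: cutting the component open produces an $\mathscr{A}$-linear endomorphism of $V$ preserving the submodule $X$, and its quantum trace is additive over the sequence. In the pseudo-operator setting there is no global operator, and the ``cut'' map is not $\mathscr{A}$-linear because the transpositions $\tau$ at seam-crossings are not $\mathscr{A}$-module maps; one must therefore work directly with the state sum---equivalently, the trace of a $\C$-linear boundary endomorphism---and verify by hand that the block-triangular structure of the local operators $\mathcal{F}_{T(p)}$ partitions the nonzero-weight states exactly into the two families computing $\Phi(D_X)$ and $\Phi(D_Y)$. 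Part (b), by contrast, reduces to a routine local computation once the tensor-product identities for $\breve R$, $N$, $E$, and $\mu$ are in place, together with the evident compatibility of $\tau$ with strand doubling.
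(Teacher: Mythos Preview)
Your proposal is correct and follows essentially the same approach as the paper: for (a) you choose a basis adapted to the submodule $X\subset V$ and use the $\mathscr{A}$-invariance of $X$ (and dually of $Y^*\subset V^*$) at each critical point to conclude that nonzero-weight states have uniform label type on $C$, then identify the two families with $\Phi(D_X)$ and $\Phi(D_Y)$; for (b) you reduce to the standard local identities for elementary tangles and glue critical-point-by-critical-point. Your discussion is in fact more explicit than the paper's—particularly your acknowledgment that in the non-split case the local operators are only block-triangular and that the closed-loop structure of $C$ is what forces uniformity—whereas the paper states the conclusion more tersely and defers the details of (b) entirely to \cite{km91,t94}.
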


\begin{proof}
To prove (a), fix bases $B_V$, $B_X$, and $B_Y$ so that $B_X \subset B_V$ (viewing $X$ as a subspace of $V$) and $B_Y$ is the projection of $\tilde{B}_Y = B_V - B_X$. We call state labels from $B_X$ or $B_{X^*}$ $X$-{\it labels}, whereas those from $\tilde{B}_Y$ or $\tilde{B}_{Y^*}$ are $Y$-{\it labels}.

If $\sigma$ is a state of $D$ with non-zero weight, then the corresponding labels on the arcs of $C$ must be either all $X$-labels (written $\sigma | C \subset X$) or all $Y$-labels (written $\sigma | C \subset Y$). This follows from the $\mathscr{A}$-invariance of $X \subset V$ (and dually of $Y^* \subset V^*$)---if the component of $C$ on one side of a critical point has an $X$-label and the component on the other side has a $Y$-label, the weight of the critical point in that state will zero. From this, we see
$$
\Phi(D) = \sum \omega(\sigma) = \sum_{\sigma | C \subset X} \omega(\sigma) + \sum_{\sigma | C \subset Y} \omega(\sigma) = \Phi(D_X) + \Phi(D_Y)
$$
as desired.

Statement (b) is a fundamental property of operator invariants---see \cite[Ch.~1]{t94} or \cite[Lem.~3.10]{km91}. As with statement (a), we extend to the case of pseudo-operator invariants by considering each critical point individually.
\end{proof}

\begin{proof}[Proof of Theorem \protect\ref{thm:cabling}]

It is a classical result (see \cite[Cor.~2.15]{km91}) that, for $0 \leq n < r$, the equality
\begin{equation}
\label{eq:rep_ring}
V^{n + 1} = \sum_{j = 0}^{n/2} (-1)^j \binom{n - j}{j} (V^2)^{n - 2j}
\end{equation}
holds in the representation ring of $\mathscr{A}_q$, where $q = e^{2\pi i/r}$ and the sum is over all $j$ with $0 \leq 2j \leq n$. Here $V^n$ refers to the $\mathscr{A}_q$-module $V^n$, while $(V^n)^j$ indicates the $j$th tensor product of $V^n$ with itself. The proof of (\ref{eq:rep_ring}) uses the fact that the modules $V^n$ satisfy the recurrence relation $V^{n + 1} = V^2 V^n - V^{n - 1}$, the same recurrence relation defining the Chebyshev polynomials in (\ref{eq:chebyshev}). 

The first equality of Theorem \ref{thm:cabling} now follows from combining Lemma \ref{thm:operator_cabling} and (\ref{eq:rep_ring}), and the second equality comes from Definition \ref{def:simple_cjp}.

\end{proof}

\begin{rmk}
Toroidal analogues of other link invariants can be constructed by considering quantum groups other than $U_q(sl(2,\C))$. For example, letting $\mathscr{A} = U_q(sl(m, \C))$ for general $m$ gives a toroidal analogue of the specialization $P_L(q^m, q - q^{-1})$, where $P_L$ is the two-variable HOMFLY polynomial of a link $L$. Letting $\mathscr{A} = U_qG$, where $G = so(m)$ or $sp(2m)$, leads to a toroidal analogue of a certain specialization of the two-variable Kauffman polynomial, depending on the choice of $G$. See \cite[Sec.~6.1]{rt90}. In Section 5, we'll construct a toroidal analogue of the $SU(2)$ colored Jones polynomial by setting $\mathscr{A} = SU(2)_q$.
\end{rmk}

\section{The Toroidal Colored Jones Polynomial and Skein Theory}

In this section only, we consider the invariant defined by specializing $\Phi$ to the quantum group $SU(2)_q$ rather than $U_q(sl(2, \C))$. This constitutes a certain normalization of the $sl(2, \C)$ invariant and we denote the $SU(2)$ version by the same notation, $\hat{J}^T_{{\bf n}, q}$. This is consistent with literature on the colored Jones polynomial and the operators involved are discussed, for example, in \cite{m93, kr89}.

The goal of the section is to develop the $SU(2)$ toroidal colored Jones polynomial skein-theoretically. This begins with an observation about the level two framed invariant $\hat{J}_{{\bf 2},q}$.

\begin{lemma}
\label{thm:pre_kauffman}
The level two $SU(2)$ invariant $\hat{J}^T_{{\bf 2},q}$ has the following properties:
\begin{enumerate}[label=(\alph*)]
\item $\hat{J}^T_{{\bf 2},q}(\varnothing) = 1$.
\item Let $C \subset T^2$ be a simple closed curve disjoint from a diagram $D \subset T^2$. 
\begin{enumerate}[label=(\roman*)]
\item If $C$ is contractible, $\hat{J}^T_{{\bf 2},q}(C \sqcup D) = (-q^{1/2} - q^{-1/2}) \hat{J}^T_{{\bf 2},q}(D) = -[2] \hat{J}^T_{{\bf 2},q}(D)$.
\item If $C$ is not contractible, $\hat{J}^T_{{\bf 2},q}(C \sqcup D) = 2 \hat{J}^T_{{\bf 2},q}(D)$.
\end{enumerate}
\item $\hat{J}^T_{{\bf 2},q}(\pcross) = q^{1/4} \hat{J}^T_{{\bf 2},q}(\astate) + q^{-1/4} \hat{J}^T_{{\bf 2},q}(\bstate)$.
\end{enumerate}
\end{lemma}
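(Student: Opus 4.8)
The plan is to verify (a), (b), and (c) one at a time directly from the state-sum definition $\hat J^T_{{\bf 2},q}(L) = \Phi^{\mathscr A_q,R,\mu}(L({\bf 2}))$ of Section~3, using three ingredients: that the state sum $\Phi$ is multiplicative over subdiagrams which share no arcs or critical points; Proposition~\ref{thm:why_two}; and the explicit form of the $R$-matrix and of the cup and cap operators for the $2$-dimensional representation of $SU(2)_q$, as recorded in \cite{m93, kr89}. Condition (a) is immediate: the empty diagram has no critical points, so the only state is the empty one, its weight is the empty product $1$, and $\Phi^{\mathscr A_q,R,\mu}(\varnothing) = 1$.

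For (b), first note that if $C \subset T^2$ is a simple closed curve disjoint from $D$, then no arc and no critical point of $C \sqcup D$ involves both $C$ and $D$; hence, for any choice of bases, each state of $C \sqcup D$ is a pair consisting of a state of $C$ and a state of $D$, with weight the product of the two weights, so $\Phi(C \sqcup D) = \Phi(C)\cdot\Phi(D)$. In case (ii) the curve $C$ is essential, so $\Phi(C) = 2$ by Proposition~\ref{thm:why_two}, giving $\hat J^T_{{\bf 2},q}(C \sqcup D) = 2\,\hat J^T_{{\bf 2},q}(D)$. In case (i), after isotoping $C$ through curves disjoint from $D$ so that it has exactly one local maximum and one local minimum in a lift, $\Phi(C)$ is the value of the $0$-framed unknot colored by $V^2$; the corresponding composition of cup and cap operators from Theorem~\ref{thm:fund_tangles} computes the quantum dimension of $V^2$, which in the $SU(2)_q$ normalization equals $-[2] = -q^{1/2} - q^{-1/2}$ (the standard computation, see \cite{m93}). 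Hence $\hat J^T_{{\bf 2},q}(C \sqcup D) = -[2]\,\hat J^T_{{\bf 2},q}(D)$.

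Condition (c) is the crux. The three diagrams agree outside a small disk around the crossing, where the construction of $\Phi_\pi$ assigns, via Theorem~\ref{thm:fund_tangles}, the operator $R$ acting on $V^2\otimes V^2$ to $\pcross$. The key identity is the decomposition of the braiding for the $2$-dimensional representation of $SU(2)_q$,
$$
R = q^{1/4}\cdot \id_{V^2\otimes V^2} + q^{-1/4}\cdot U,
$$
where $U\colon V^2\otimes V^2 \to V^2\otimes V^2$ is the cup–cap (Temperley–Lieb) operator; this is precisely what makes $A^4 = q$ the correct substitution for the Kauffman bracket. Since $\Phi$ is a sum of products of local weights and is linear in the local operator at the crossing, substituting this identity into the state sum for $\pcross$ expresses it as $q^{1/4}$ times the state sum for the diagram in which the crossing is replaced by the identity tangle, plus $q^{-1/4}$ times the state sum for the diagram in which it is replaced by the cup–cap tangle. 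Both resolutions are again torus diagrams with blackboard framing (they are standard planar tangles supported in a disk), so no framing correction is introduced, consistent with $\hat J^T_{{\bf 2},q}$ being a framed invariant; identifying the identity tangle with $\astate$ and the cup–cap tangle with $\bstate$, as in the usual Kauffman-bracket conventions at a positive crossing, yields (c).

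The main obstacle, and the only genuinely non-formal point, is fixing the $R$-matrix and the cup/cap operators for $V^2$ in the $SU(2)_q$ normalization precisely enough that the braiding coefficients come out as $q^{\pm 1/4}$ and the colored unknot evaluates to $-[2]$ with the correct sign; once these normalizations are in hand (as in \cite{m93, kr89}), the remaining manipulations of the state sum are routine and mirror the classical derivation of the Kauffman bracket skein relations.
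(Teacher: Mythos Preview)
Your proposal is correct and follows essentially the same approach as the paper: both reduce (a), (b)(i), and (c) to local algebraic facts about the $SU(2)_q$ $R$-matrix and cup/cap operators (the paper simply cites \cite{m93} for these, whereas you spell out the $R = q^{1/4}\,\id + q^{-1/4}\,U$ decomposition and the quantum dimension explicitly), and both obtain (b)(ii) from Proposition~\ref{thm:why_two} together with multiplicativity of $\Phi$ on disjoint diagrams.
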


\begin{proof}
Properties (a), (b)(i) and (c) are identical to the relations defining the usual Kauffman bracket. Because they are local properties ((b)(i) is local in the sense that we can assume $C$ exists in a coordinate neighborhood of $T^2$), the proofs are the same as for the usual $SU(2)$ Jones polynomial---see \cite[Thm.~4.1]{m93}. Each property reduces to an algebraic statement about the quantum group $SU(2)_q$.

To prove property (b)(ii) holds for $\hat{J}^T_{{\bf 2}, q}$, suppose $C \subset T^2$ is a simple, closed essential curve. Then $\hat{J}^T_{2,q}(C) = 2$ by Proposition \ref{thm:why_two}. The general statement follows from the multiplicativity of $\hat{J}_{{\emph n}, q}$ on disjoint diagrams; that is,
$$
\hat{J}^T_{{\bf 2}, q}(U \sqcup D) = \hat{J}^T_{{\bf 2}, q}(U) \cdot \hat{J}^T_{{\bf 2}, q}(D) = 2 \hat{J}^T_{{\bf 2}, q}(D).
$$
\end{proof}

Lemma \ref{thm:pre_kauffman} leads to the following definition and theorem:

\begin{defn}
\label{def:kauffman}
Define a Kauffman-type bracket $\langle * \rangle_\tau \in \Z[A^{\pm 1}, z]$ on link diagrams in $T^2$ (and framed links in $T^2 \times I$) by the relations
\begin{enumerate}[label=(\alph*)]
\item $\langle \varnothing \rangle_\tau= 1$.
\item Let $C \subset T^2$ be a simple closed curve disjoint from a diagram $D \subset T^2$. 
\begin{enumerate}[label=(\roman*)]
\item If $C$ is contractible, $\langle C \sqcup D \rangle_\tau = (-A^2 - A^{-2}) \langle D \rangle_\tau$.
\item If $C$ is not contractible, $\langle C \sqcup D \rangle_\tau = z \cdot \langle D \rangle_\tau$.
\end{enumerate}
\item $\langle \pcross \rangle_\tau = A \langle \astate \rangle_\tau + A^{-1} \langle \bstate \rangle_\tau$.
\end{enumerate}
\end{defn}

\begin{thm}
\label{thm:kauffman}
For any framed link $L \subset T^2 \times I$,
$$
\hat{J}^T_{{\bf 2},q}(L) = \langle L \rangle_\tau |_{A^4 = q, z = 2}.
$$
\end{thm}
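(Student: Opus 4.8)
The plan is to show that $\hat{J}^T_{{\bf 2},q}$, viewed as a function on framed link diagrams in $T^2$ with values in $\Z[q^{\pm 1/4}]$, satisfies the three defining relations (a), (b), (c) of the bracket $\langle * \rangle_\tau$ after the substitution $A^4 = q$, $z = 2$, and then to invoke the fact that these relations determine $\langle * \rangle_\tau$ uniquely. Lemma~\ref{thm:pre_kauffman} already records exactly these three identities for $\hat{J}^T_{{\bf 2},q}$: relation (a) is part (a) of the lemma; relation (b)(i) with $-A^2 - A^{-2}\big|_{A^4 = q} = -q^{1/2} - q^{-1/2}$ is part (b)(i); relation (b)(ii) with $z\big|_{z=2} = 2$ is part (b)(ii); and relation (c) with $A\big|_{A^4 = q} = q^{1/4}$ and $A^{-1}\big|_{A^4=q} = q^{-1/4}$ is part (c). So the analytic content of the theorem is essentially contained in the previous lemma, and what remains is the structural argument that these relations, together, suffice to compute the bracket of any framed link.

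First I would establish a normal-form / well-definedness statement for $\langle * \rangle_\tau$: every framed link diagram in $T^2$ can be reduced, by iterated application of (c) at each crossing, to a $\Z[A^{\pm 1}]$-linear combination of crossingless diagrams, i.e.\ disjoint unions of simple closed curves in $T^2$; then relations (b)(i) and (b)(ii) evaluate each such union by stripping off one curve at a time (a contractible curve contributes $-A^2 - A^{-2}$, an essential curve contributes $z$), down to the empty diagram, which has value $1$ by (a). This is the same inductive skeleton as for the ordinary Kauffman bracket in $S^3$; the only new ingredient is case (b)(ii) for essential curves. I would note that a crossingless collection of disjoint simple closed curves in $T^2$ consists of some contractible circles together with some parallel copies of a single essential slope (two disjoint essential curves must be parallel), so the evaluation is unambiguous. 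To be careful I should also confirm that the result of the crossing-resolution recursion is independent of the order in which crossings are resolved and of the diagram chosen for $L$ — but the latter is exactly the invariance already proved for $\hat{J}^T_{{\bf 2},q}$ in Theorem~\ref{thm:invariance} / Theorem~\ref{thm:orientation}, so I can instead argue: define $\langle L \rangle_\tau$ by the recursion, observe it is forced by (a)--(c), observe $\hat{J}^T_{{\bf 2},q}$ satisfies (a)--(c) by Lemma~\ref{thm:pre_kauffman} and is a genuine invariant, hence the two agree on every diagram.

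Concretely, the argument is an induction on the number of crossings $c(D)$ of a diagram $D$ of $L$. For the base case $c(D) = 0$, $D$ is a disjoint union of simple closed curves; I evaluate both $\langle D \rangle_\tau\big|_{A^4=q,z=2}$ and $\hat{J}^T_{{\bf 2},q}(L)$ by repeatedly applying (b)(i)/(b)(ii) (resp.\ Lemma~\ref{thm:pre_kauffman}(b)(i)/(b)(ii)) and (a) (resp.\ Lemma~\ref{thm:pre_kauffman}(a)), getting $(-q^{1/2}-q^{-1/2})^{a} \cdot 2^{b}$ where $a$ is the number of contractible components and $b$ the number of essential components; these match. For the inductive step, pick a crossing of $D$; relation (c) expresses $\langle D \rangle_\tau = A\langle D_A\rangle_\tau + A^{-1}\langle D_B\rangle_\tau$ where $D_A$, $D_B$ are the two smoothings, each with $c(D) - 1$ crossings, and Lemma~\ref{thm:pre_kauffman}(c) gives the identical identity for $\hat{J}^T_{{\bf 2},q}$ with $A$, $A^{-1}$ replaced by $q^{1/4}$, $q^{-1/4}$; by the induction hypothesis applied to $D_A$ and $D_B$ the two sides agree after substitution. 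The main obstacle — really the only nontrivial point — is the base case bookkeeping for essential curves: I must make sure that ``essential simple closed curve in $T^2$'' is the right dichotomy (as opposed to, say, remembering the slope), that Proposition~\ref{thm:why_two} indeed gives value $2$ for \emph{every} essential slope and every number of parallel copies (parallel copies are handled by the disjoint-union multiplicativity used in the proof of Lemma~\ref{thm:pre_kauffman}(b)(ii)), and that no essential curve can ever be separated off as ``contractible'' by an ambient isotopy — i.e.\ the two cases in (b) are genuinely disjoint and exhaustive, which is a standard fact about curves on the torus. Once that is in place the induction closes and the theorem follows.
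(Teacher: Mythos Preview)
Your proposal is correct and is essentially the paper's argument made explicit: the paper states Theorem~\ref{thm:kauffman} immediately after Lemma~\ref{thm:pre_kauffman} and Definition~\ref{def:kauffman} without a separate proof, treating it as an immediate consequence of the fact that $\hat{J}^T_{{\bf 2},q}$ satisfies the defining relations of $\langle * \rangle_\tau$ under the substitution $A^4 = q$, $z = 2$. Your induction on crossing number simply spells out why those relations determine the bracket uniquely, which the paper leaves to the reader.
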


To extend Theorem \ref{thm:kauffman} to all values of ${\bf n}$ for $\hat{J}_{{\bf n}, q}$, we consider the skein module of the thickened torus, $\mathscr{S}(T^2)$. As Section 2.1 discusses, a basis for $\mathscr{S}(T^2)$ as a $\Z[A^{\pm 1}]$-module is given by positive powers of the tuples $(a,b)$ such that either $a = b = 0$ or $a$ and $b$ are coprime. Let $p_2 : \mathscr{S}(T^2) \to \Z[A^{\pm1}]$ be the $\Z[A^{\pm 1}]$-linear map defined by:
$$
p_2((a,b)^m) = 
\begin{cases}
1 & a = b = 0 \\
2^m & \text{otherwise}
\end{cases}.
$$
Then it's clear that, for any framed link $L \subset T^2 \times I$,
$$
\langle L \rangle_\tau |_{z = 2} = p_2(\langle L \rangle_T)
$$
where $\langle L \rangle_T$ is the class of $L$ in $\mathscr{S}(T^2)$.

We can now state the full result using $\mathscr{S}(T^2)$.
\begin{thm}
\label{thm:skein_op}
For any oriented, unframed link $L \subset T^2 \times I$ with diagram $D \subset T^2$,
$$
\hat{J}^T_{{\bf n}, q}(L) = p_2(\langle S_{n_1 - 1}(z), \dots, S_{n_k - 1}(z) \rangle_{T,D}) \big|_{A^4 = q}
$$
where $\hat{J}^T_{{\bf n}, q}$ is the $SU(2)$ invariant and $\langle * \rangle_{T,D} : \mathscr{S}(\mathcal{A})^{\otimes k} \to \mathscr{S}(T^2)$ is the multi-bracket of equation (\ref{eq:torus_multibracket}).
\end{thm}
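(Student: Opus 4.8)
The plan is to reduce Theorem~\ref{thm:skein_op} to the already-established Cabling Formula (Theorem~\ref{thm:cabling}) together with Theorem~\ref{thm:kauffman}. Both sides of the claimed identity are $\Z[A^{\pm 1}]$-multilinear in the entries of $\mathscr{S}(\mathcal{A})^{\otimes k}$ once we recognize how the $\hat{J}^T_{{\bf n},q}$ evaluation expands: the Cabling Formula writes $\hat{J}^T_{{\bf n},q}(L)$ as a signed sum, indexed by ${\bf i}$, of the level-two invariant $\hat{J}^T_{{\bf 2},q}$ applied to the cables $L^{{\bf n}-1-2{\bf i}}$, with coefficients $(-1)^{\bf i}\binom{{\bf n}-1-{\bf i}}{\bf i}$. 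On the skein side, the multi-bracket $\langle S_{n_1-1}(z),\dots,S_{n_k-1}(z)\rangle_{T,D}$ by definition expands the Chebyshev polynomials $S_{n_j-1}(z)=\sum_{i_j}(-1)^{i_j}\binom{n_j-1-i_j}{i_j}z^{\,n_j-1-2i_j}$ in the monomial basis $\{1,z,z^2,\dots\}$ of $\mathscr{S}(\mathcal{A})$, and by the definition of the Kauffman multi-bracket, $\langle z^{m_1},\dots,z^{m_k}\rangle_{T,D}=\langle D^{(m_1,\dots,m_k)}\rangle_T$, the class in $\mathscr{S}(T^2)$ of the link cabled by $m_j$ parallel copies in the $j$th slot. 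So the two signed expansions have \emph{identical} combinatorial coefficients and index sets; it remains only to match the summands term by term.

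The term-by-term comparison reduces to a single statement about monomial cablings: for every tuple of nonnegative integers ${\bf m}=(m_1,\dots,m_k)$,
$$
\hat{J}^T_{{\bf 2},q}(L^{\bf m}) \;=\; p_2\bigl(\langle z^{m_1},\dots,z^{m_k}\rangle_{T,D}\bigr)\big|_{A^4=q}
\;=\; p_2\bigl(\langle D^{\bf m}\rangle_T\bigr)\big|_{A^4=q}.
$$
This follows from Theorem~\ref{thm:kauffman} applied to the framed link $L^{\bf m}$, together with the relation $\langle L^{\bf m}\rangle_\tau|_{z=2}=p_2(\langle L^{\bf m}\rangle_T)$ established in the paragraph preceding Theorem~\ref{thm:skein_op}: indeed Theorem~\ref{thm:kauffman} gives $\hat{J}^T_{{\bf 2},q}(L^{\bf m}) = \langle L^{\bf m}\rangle_\tau|_{A^4=q,\,z=2}$, and evaluating $z=2$ in $\langle L^{\bf m}\rangle_\tau$ is exactly applying $p_2$ to $\langle L^{\bf m}\rangle_T$ and then setting $A^4=q$. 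One must check the two specializations ($z=2$ and $A^4=q$) commute, which is automatic since $z$ and $A$ are independent indeterminates; and one must note that $\hat{J}^T_{{\bf 2},q}$ genuinely depends only on the \emph{framed isotopy class} of $L^{\bf m}$ in $T^2\times I$ (not on the diagram $D^{\bf m}$), so the right-hand side is well-defined --- this is Theorem~\ref{thm:invariance} (or \ref{thm:orientation}) for $\hat{J}^T_{{\bf n},q}$ with ${\bf n}={\bf 2}$.

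Assembling the pieces: substitute the monomial identity into each summand of the Cabling Formula, factor out the coefficients $(-1)^{\bf i}\binom{{\bf n}-1-{\bf i}}{\bf i}$ (using $\Z[A^{\pm1}]$-linearity of $p_2$ and of the multi-bracket to pull the sum through $p_2$), and recognize the result as $p_2$ applied to $\sum_{\bf i}(-1)^{\bf i}\binom{{\bf n}-1-{\bf i}}{\bf i}\langle D^{{\bf n}-1-2{\bf i}}\rangle_T = \langle S_{n_1-1}(z),\dots,S_{n_k-1}(z)\rangle_{T,D}$, with $A^4=q$ throughout. The main obstacle --- really the only nonroutine point --- is verifying that the combinatorial expansion of the Chebyshev polynomials used implicitly in the Cabling Formula (via the representation-ring identity~(\ref{eq:rep_ring}) for $V^{n+1}$ in terms of powers of $V^2$) is \emph{literally} the same expansion, index set included, as the one coming from $S_{n-1}(z)$ in the monomial basis of $\mathscr{S}(\mathcal{A})$; both stem from the shared recurrence $V^{n+1}=V^2 V^n - V^{n-1}$ versus $S_{j+1}=zS_j - S_{j-1}$, so they agree, but this correspondence should be spelled out carefully, and one should confirm the Cabling Formula's stated summation range $1\le i_j\le (n_j-1)/2$ matches $0\le 2i_j\le n_j-1$ from~(\ref{eq:rep_ring}) after the index shift $n\mapsto n_j-1$. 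Everything else is bookkeeping.
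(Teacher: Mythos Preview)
Your proposal is correct and follows essentially the same approach as the paper's proof: expand the Chebyshev polynomials in the monomial basis, use Theorem~\ref{thm:kauffman} (together with the identification $\langle \cdot \rangle_\tau|_{z=2} = p_2(\langle \cdot \rangle_T)$) to match each monomial term with $\hat{J}^T_{{\bf 2},q}$ of the corresponding cable, and then invoke the Cabling Formula (Theorem~\ref{thm:cabling}) to recombine into $\hat{J}^T_{{\bf n},q}(L)$. The paper runs the same computation in the reverse direction (starting from the skein side and ending at the Cabling Formula), but the content is identical; your extra remarks about index ranges and commuting specializations are fine bookkeeping.
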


\begin{proof}
A closed formula for the $n$th Chebyshev polynomial, as defined in (\ref{eq:chebyshev}), is given by
\begin{equation}
\label{eq:two}
S_n(z) = \sum_{j = 0}^{n/2} (-1)^j \binom{n - j}{j} z^{n - 2j}
\end{equation}
where the sum is over all integers $j$ with $0 \leq 2j \leq n$. Let $L \subset T^2 \times I$ be a $k$-component link and ${\bf n} = (n_1, \dots, n_k)$ a multi-integer. Applying the above formula and the multilinearity of $p_2$ and the Kauffman multi-bracket, we have
\begin{align*}
p_2(\langle S_{n_1}(z), \dots, S_{n_k}(z)\rangle_{T,D}) &= \sum_{{\bf j} = 0}^{{\bf n}/2} (-1)^{\bf j} \binom{{\bf n} - {\bf j}}{\bf j} p_2(\langle L^{{\bf n} - 2{\bf j}}\rangle_T) \\
&= \sum_{{\bf j} = 0}^{{\bf n}/2} (-1)^{\bf j} \binom{{\bf n} - {\bf j}}{\bf j} \hat{J}^T_{{\bf 2},q} (L^{{\bf n} - 2{\bf j}}) \\
&= \hat{J}^T_{{\bf n} - 1, q}(L).
\end{align*}
The second equality is Theorem \ref{thm:kauffman}. The third comes from Theorem \ref{thm:cabling}, which applies in the $SU(2)$ theory since the same relation $V^{j + 1} = V^2 V^j - V^{j - 1}$ holds in the representation ring.
\end{proof}

Having constructed the $SU(2)$ $\hat{J}_{{\bf n}, q}$ skein-theoretically, we can define a skein-theoretic toroidal colored Jones polynomial. The $SU(2)$ version of $\hat{J}^T_{{\bf n}, q}$ satisfies \cite{ms91}
\begin{align*}
\hat{J}^T_{{\bf n}, q}(\dpcurl) &= (-1)^{n - 1} q^{(n^2 - 1)/4} \cdot \hat{J}^T_{{\bf n}, q}(\dstrand) \\
\hat{J}^T_{{\bf n}, q}(\dncurl) &= (-1)^{n - 1} q^{-(n^2 - 1)/4} \cdot \hat{J}^T_{{\bf n}, q}(\dstrand),
\end{align*}
where the strand shown in the diagram is colored by $V^n$. Subsequently:
\begin{defn}
The \emph{$SU(2)$ toroidal colored Jones polynomial} $J_n^T$ of an oriented, unframed link $L \subset T^2 \times I$ with diagram $D$ is defined by
$$
J^T_n(L;q) = ((-1)^{n - 1} q^{(n^2 - 1)/4})^{-w(D)} \hat{J}^T_{(n, \dots, n), q}(D).
$$
\end{defn}

We immediately have:

\begin{cor}
\label{thm:skein_cjp}
The $SU(2)$ toroidal colored Jones polynomial is defined skein-theoretically by
$$
J^T_n(L;q) = \Big[ ((-1)^{n - 1} A^{n^2 - 1})^{-w(D)} p_2(\langle S_{n - 1}(z), \dots, S_{n - 1}(z) \rangle_D) \Big]\Big|_{A^4 = q}.
$$
\end{cor}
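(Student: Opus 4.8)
The plan is to obtain the corollary as an immediate consequence of \refthm{skein_op} together with the definition of the $SU(2)$ toroidal colored Jones polynomial stated just above it. First I would write out that definition,
$$
J^T_n(L;q) = ((-1)^{n-1} q^{(n^2-1)/4})^{-w(D)}\,\hat{J}^T_{(n,\dots,n),q}(D),
$$
and then apply \refthm{skein_op} with the constant multi-integer ${\bf n} = (n,\dots,n)$ to rewrite the framed invariant on the right as
$$
\hat{J}^T_{(n,\dots,n),q}(D) = p_2\big(\langle S_{n-1}(z),\dots,S_{n-1}(z)\rangle_{T,D}\big)\big|_{A^4 = q}.
$$
Substituting the second display into the first gives the claimed formula once the scalar prefactor is expressed in terms of $A$.

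The only computation is this reexpression of the prefactor. Under the specialization $A^4 = q$ one has $q^{(n^2-1)/4} = (A^4)^{(n^2-1)/4} = A^{n^2-1}$, so
$$
((-1)^{n-1} q^{(n^2-1)/4})^{-w(D)} = \big[((-1)^{n-1} A^{n^2-1})^{-w(D)}\big]\big|_{A^4 = q};
$$
since $w(D)\in\Z$ the exponent on $A$ is an integer and the sign $(-1)^{(n-1)w(D)}$ does not depend on which fourth root of $q$ is chosen, so the expression is unambiguous. Pulling the prefactor inside the evaluation bracket and combining with the formula for $\hat{J}^T_{(n,\dots,n),q}(D)$ yields
$$
J^T_n(L;q) = \Big[ ((-1)^{n-1} A^{n^2-1})^{-w(D)}\, p_2\big(\langle S_{n-1}(z),\dots,S_{n-1}(z)\rangle_D\big) \Big]\Big|_{A^4 = q},
$$
as desired, where $\langle *\rangle_D$ abbreviates the torus multi-bracket $\langle *\rangle_{T,D}$ of (\ref{eq:torus_multibracket}) followed by $p_2$, exactly as in \refthm{skein_op}.

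I do not expect any genuine obstacle: the corollary is a purely formal rearrangement, and all the substantive input — the cabling formula of \refthm{cabling}, the closed Chebyshev expansion (\ref{eq:two}), the identity $\langle L\rangle_\tau|_{z=2} = p_2(\langle L\rangle_T)$, and \refthm{kauffman} — has already been consumed in proving \refthm{skein_op}. The only points requiring care are bookkeeping ones: keeping the $SU(2)$ normalization consistent throughout (both the definition of $J^T_n$ used here and \refthm{skein_op} are stated for the $SU(2)$ invariant), and ensuring the evaluation $|_{A^4 = q}$ is applied to the entire bracketed expression rather than only to the multi-bracket factor.
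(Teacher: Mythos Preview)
Your proposal is correct and matches the paper's approach exactly: the paper presents the corollary with no proof beyond the phrase ``We immediately have,'' and your derivation---substituting \refthm{skein_op} with ${\bf n}=(n,\dots,n)$ into the $SU(2)$ definition of $J^T_n$ and rewriting $q^{(n^2-1)/4}$ as $A^{n^2-1}$---is precisely the intended one-line unpacking. The only quibble is notational: in your final sentence you describe $\langle *\rangle_D$ as the multi-bracket ``followed by $p_2$,'' but in both the corollary and your displayed formula $p_2$ is applied externally, so $\langle *\rangle_D$ should simply abbreviate $\langle *\rangle_{T,D}$.
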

Compare the right side of Theorem \ref{thm:skein_op} with equation (\ref{eq:skein_jones})---the missing factor of $1/(-A^2 - A^{-2})$ is analogous to the missing $1/[n]$ factor in the $U_q(sl(2,\C))$ case.

The skein theoretic definitions of Theorem \ref{thm:skein_op} and Corollary \ref{thm:skein_cjp} let us extend the $SU(2)$ invariants $\hat{J}^T_{{\bf n}, q}$ and $J^T_n$ to links in orientable manifolds other than $T^2 \times I$, using the bracket $\langle * \rangle_\tau$ of Definition \ref{def:kauffman} (with $z = 2$) as a generalized Kauffman bracket. In $S^3$ the bracket $\langle * \rangle_\tau$ coincides with the usual Kauffman bracket, and thus the invariant $\hat{J}^T_{{\bf n}, q}$ defined in $S^3$ is exactly the invariant $J_L$ of \cite{m93}. The $SU(2)$ toroidal colored Jones polynomial, defined skein-theoretically in $S^3$, satisfies
$$
J^T_n(L;q) = (-A^2 - A^{-2})|_{A^4 = q} \cdot J_n(L;q)
$$
where $J_n$ is the $SU(2)$ colored Jones polynomial of equation (\ref{eq:skein_jones}).

\section{The Volume Conjecture for the 2-by-2 Square Weave}

We now prove the volume conjecture for the \emph{$2$-by-$2$ square weave} $W \subset T^2 \times I$, the link shown in Figure \ref{fig:square_weave}. More generally, a diagram for the \emph{$2k$-by-$2l$ square weave} $W_{2k, 2l}$, $k,l \in \N$, is made by tiling $W$ to form a rectangular grid with $2k$ rows and $2l$ columns of crossings. We consider only even dimensions to ensure the diagram is alternating on the torus.

The complement of $W_{2k,2l}$ in $T^2 \times I$ is geometrically simple---Champanerkar, Kofman and Purcell \cite{ckp16} describe a complete hyperbolic structure for $(T^2 \times I) \setminus W_{2k,2l}$ consisting of $4kl$ regular ideal hyperbolic octahedra, one for each crossing. Thus
\begin{equation}
\label{eq:vol}
\rm{Vol}(W_{2k, 2l}) = 4kl \cdot v_\text{oct}.
\end{equation}
Separately, we have the following proposition.
\begin{prop}
\label{thm:gl}
Let $q = e^{2\pi i/n}$. If $L \subset T^2 \times I$ is an oriented link which has a diagram $D \subset T^2$ with $c$ crossings,
$$
\lim_{n \to \infty} \frac{2\pi}{n} \log | J^T_n(L;q)| \leq c \cdot v_\text{oct}.
$$
\end{prop}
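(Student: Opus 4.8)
The plan is to bound $|J^T_n(L;q)|$ above by estimating each piece of the skein-theoretic state sum from Corollary~\ref{thm:skein_cjp} (equivalently, the construction of $\Phi$ via the $R$-matrix). Writing $q = e^{2\pi i/n}$, the quantity $J^T_n(L;q)$ is a state sum over colorings of the $c$ crossings of $D$, where each crossing contributes an $R$-matrix entry and the various strands are colored by $V^n$. First I would recall the explicit formula for the entries of the $R$-matrix of $\mathcal{U}_q(sl(2,\mathbb{C}))$ acting on $V^n \otimes V^n$ at $q = e^{2\pi i/n}$, and bound each entry in modulus; the key arithmetic fact is that $q$-binomial / quantum factorial expressions at this root of unity, when normalized appropriately, have modulus controlled by $\exp\!\big(\tfrac{n}{2\pi} v_\text{oct} + o(n)\big)$ per crossing. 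This is exactly the estimate that underlies the fact that each octahedron contributes $v_\text{oct}$ to the volume: the asymptotics of the relevant quantum-integer products against $n$ are governed by the dilogarithm, and the maximum of the associated potential is $v_\text{oct}$, the volume of the regular ideal octahedron. I would cite the standard dilogarithm asymptotics for quantum factorials at roots of unity (as used, e.g., in the analysis of the Kashaev invariant of the $4_1$ knot, where each of the two octahedra gives $v_\text{oct}$).

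Next I would assemble these local bounds into a global one. The state sum has at most (number of states) $\times$ (max weight per state) terms, and the number of states is polynomial in $n$ raised to a fixed power (bounded by $n^{(\text{number of edges of } D)}$), hence contributes only $o(1)$ after multiplying by $\tfrac{2\pi}{n}\log(\cdot)$. The writhe prefactor $((-1)^{n-1}q^{(n^2-1)/4})^{-w(D)}$ has modulus $1$ since $q$ is a root of unity, so it does not affect the estimate at all. Likewise the factor $p_2$ only substitutes $z = 2$ and contributes a bounded multiplicative constant per essential curve appearing in a resolution, again absorbed into $o(1)$. Therefore
$$
\frac{2\pi}{n}\log|J^T_n(L;q)| \;\le\; \frac{2\pi}{n}\log\Big(\text{(poly in }n)\cdot \exp\big(\tfrac{n}{2\pi}\, c\, v_\text{oct} + o(n)\big)\Big) \;=\; c\, v_\text{oct} + o(1),
$$
and taking $\limsup$ (or $\lim$, where it exists) as $n \to \infty$ gives the claimed inequality.

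I expect the main obstacle to be the per-crossing estimate: namely, showing that the modulus of a single $R$-matrix entry on $V^n \otimes V^n$ at $q = e^{2\pi i/n}$ grows at most like $\exp\!\big(\tfrac{n}{2\pi} v_\text{oct}\big)$. This requires a careful application of the asymptotic expansion of $\log|(q;q)_k|$-type sums in terms of the Lobachevsky function or the dilogarithm, a uniform bound over all admissible index choices, and the identification of the supremum of the resulting potential function with $v_\text{oct}$. Everything else — the combinatorics of the state sum, the harmlessness of the writhe and $p_2$ factors, and the passage to the limit — is routine bookkeeping once this local estimate is in hand. (One can alternatively phrase the whole argument in terms of the Kauffman-bracket skein expansion: resolve each crossing, bound the contribution of each of the $2^c$ states by the turaev-shadow / octahedral volume bound, and note the number of states is $2^c$, independent of $n$; but the $R$-matrix phrasing makes the root-of-unity cancellations most transparent.)
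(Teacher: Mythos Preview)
Your proposal is correct and follows essentially the same route as the paper: bound the state sum by (number of states) $\times$ (maximum weight), note that the writhe prefactor has modulus $1$ and the state count is polynomial in $n$, and invoke the per-crossing $R$-matrix bound $\tfrac{2\pi}{n}\log|\omega_p(\sigma')| \le v_\text{oct}$. The paper simply cites Garoufalidis--L\^e \cite[Cor.~8.10]{gl11} for that last estimate rather than re-deriving it from dilogarithm asymptotics, and works directly with the $U_q(sl(2,\C))$ pseudo-operator state sum (so the $p_2$ factor never enters); your aside about a Kauffman-bracket $2^c$-state argument is a different idea that would not by itself give the $v_\text{oct}$ bound, but your main argument matches the paper's.
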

\begin{proof}
The result follows from work of Garoufalidis and L{\^e} \cite[Cor.~8.10]{gl11} (see also Thm. 1.13). Recall the definition of $J_n^T$ as a state sum (Definition \ref{def:simple_cjp} and equations (\ref{eq:state_sum_1}) and (\ref{eq:state_sum_2})):
$$
J_n^T(L;q) = (q^{(n^2 - 1)/4})^{-w(D)}\sum_{\sigma}\prod_{p \in P} \omega_p(\sigma),
$$
where $P$ is the set of critical points of $D$ and $\sigma$ is a state of $D$. Since $q$ is a root of unity, $|J_n^T| = |\sum_{\sigma}\prod_{p \in P} \omega_p(\sigma)|$.

Let $m$ be the number of connected components of $D \setminus P$; then $D$ has $n^m$ states. If $\sigma'$ is a state with maximum modulus weight, then
\begin{equation}
\label{eq:gl_proof}
\log|J_n^T| = \log|\sum_{\sigma} \prod_{p \in P} \omega_p(\sigma)|| \leq \log|n^m \prod_{p \in P} \omega_p(\sigma')| = m\log n + \sum_{p \in P} \log |\omega_p(\sigma')|.
\end{equation}
Fix the standard basis for $V^n \cong \C^n$. In this basis, if $p \in P$ is an extremum then $|\omega_p(\sigma')| = 1$. If $p$ is a crossing then $\omega_p(\sigma')$ is an element of the $R$-matrix of $\mathscr{A}_q$ in the given basis, and Garoufalidis and L\^e proved
\begin{equation}
\label{eq:gl_bound}
\frac{2\pi}{n} \lim_{n \to \infty} \log |\omega_p(\sigma')| \leq v_\text{oct}.
\end{equation}
Considering $\lim_{n \to \infty} \frac{2\pi}{n} \log|J_n^T|$, the summands $\log|\omega_p(\sigma')|$ with $p$ a crossing are the only summands of (\ref{eq:gl_proof}) which don't vanish asymptotically. Thus, applying (\ref{eq:gl_bound}),
$$
\lim_{n \to \infty} \frac{2\pi}{n} \log|J_n^T| \leq c \cdot v_\text{oct}.
$$
\end{proof}

Equation (\ref{eq:vol}) and Proposition \ref{thm:gl} together give
\begin{equation}
\label{eq:bound}
\lim_{n \to \infty} \frac{2\pi}{n} \log | J^T_n(W_{2k, 2l};e^\frac{2\pi i}{n})| \leq 4kl \cdot v_\text{oct} = \rm{Vol}((T^2 \times I) \setminus W_{2k, 2l}).
\end{equation}
In other words, for the rectangular weave $W_{2k, 2l}$, the asymptotic growth of the toroidal colored Jones polynomial is bounded above by the volume of the complement. This makes the $2k$-by-$2l$ rectangular weave a natural object of study for our Volume Conjecture \ref{thm:conj}---to prove the conjecture for a link in this family, we need only show the upper bound in (\ref{eq:bound}) is achieved.

Before proving Theorem \ref{thm:main_one}, we give a formula for $J^T_n(W; q) = J^T_n(W_{2,2}; q)$. As an isomorphism of $V^n \otimes V^n$ the $R$-matrix $R$ of $\mathscr{A}_q$ is defined by weights $R^{ij}_{kl} \in \C$,
$$
R(e_k \otimes e_l) = \sum_{i,j = 0}^{n - 1} R^{ij}_{kl} e_i \otimes e_j,
$$
where $\{e_0, \dots, e_{n - 1}\}$ is a preferred basis of $V^{n}$. We have
\begin{equation}
\label{eq:r}
R^{ij}_{kl} = \sum_{m = 0}^{\text{min}(n - 1 - i, j)} \delta^l_{i + m} \delta^k_{j - m} \cdot q^\alpha \cdot \frac{\{l\}!\{n - 1 - k\}!}{\{i\}!\{m\}!\{n - 1 - j\}!}
\end{equation}
where
$$
\alpha = (i - (n - 1)/2)(j - (n - 1)/2) - m(i - j)/2 - m(m + 1)/4,
$$
$\delta$ is the Kronecker delta, and $\{m\}! = \{m\}\{m - 1\} \cdots \{2\}\{1\}$. Similarly, $R^{-1} : V^n \otimes V^n \to V^n \otimes V^n$ is defined by the scalars
\begin{equation}
\label{eq:r_inv}
(R^{-1})^{ij}_{kl} = \sum_{m = 0}^{\text{min}(n - 1 - i, j)} \delta^l_{i - m} \delta^k_{j + m} \cdot (-1)^m \cdot q^\beta \cdot \frac{\{k\}!\{n - 1 - l\}!}{\{j\}!\{m\}!\{n - 1 - i\}!}
\end{equation}
where
$$
\beta = -(i - (n - 1)/2)(j - (n - 1)/2) - m(i - j)/2 + m(m + 1)/4.
$$
Let $D$ be the diagram for $W$ in Figure \ref{fig:sq_weave_states}a, with components labelled by basis elements as shown---implicitly we've chosen a covering map where crossings are oriented downward and there are no maxima or minima. We have
$$
J^T_n(W; q) = \sum_{a,b,\dots, g, h = 0}^{n - 1} (R^{-1})^{fa}_{gb}R^{be}_{cf}R^{dg}_{ah}(R^{-1})^{hc}_{ed}.
$$

\begin{figure}[H]
\centering
\subcaptionbox{}[.48\textwidth]{
\labellist
\small\hair 2pt
\pinlabel $a$ at 245 365
\pinlabel $b$ at 245 260
\pinlabel $c$ at 245 160
\pinlabel $d$ at 245 95
\pinlabel $e$ at 185 95
\pinlabel $f$ at 185 365
\pinlabel $g$ at 185 260
\pinlabel $h$ at 185 160
\pinlabel $a$ at 80 160
\pinlabel $d$ at 80 260
\pinlabel $e$ at 335 260
\pinlabel $f$ at 335 160
\endlabellist
\includegraphics[height=7cm]{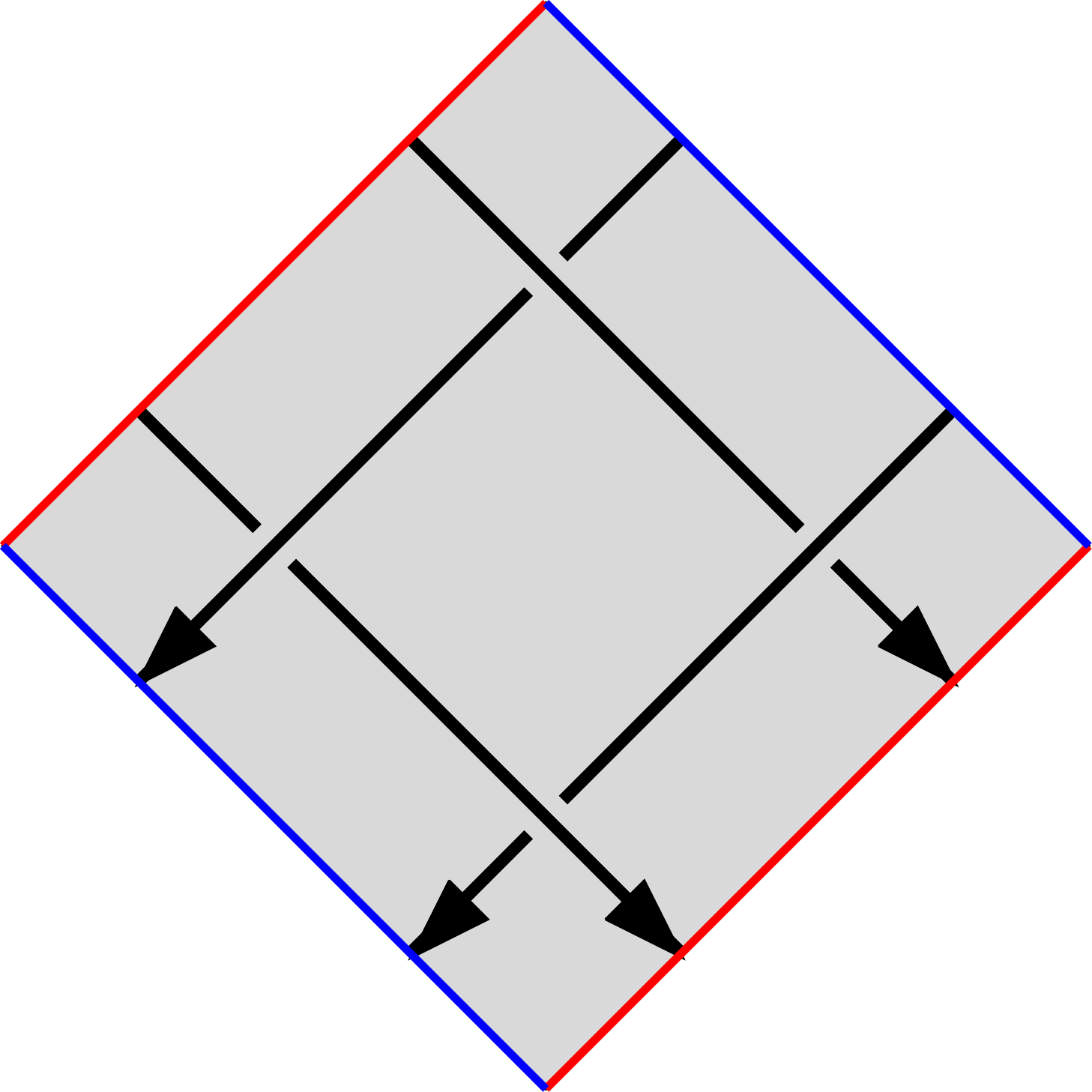}
}
\subcaptionbox{Each crossing is labelled $m$}[.48\textwidth]{
\labellist
\small\hair 2pt
\pinlabel $a$ at 245 365
\pinlabel $b$ at 245 260
\pinlabel $c$ at 245 160
\pinlabel $d$ at 245 95
\pinlabel $c+m$ at 160 95
\pinlabel $b+m$ at 200 365
\pinlabel $a+m$ at 180 240
\pinlabel $d+m$ at 180 180
\pinlabel $a$ at 80 160
\pinlabel $d$ at 80 260
\pinlabel $c+m$ at 325 260
\pinlabel $b+m$ at 325 160
\pinlabel ${\bf m}$ at 215 340
\pinlabel ${\bf m}$ at 215 130
\pinlabel ${\bf m}$ at 340 210
\pinlabel ${\bf m}$ at 80 210
\endlabellist
\includegraphics[height=7cm]{images/square_weave_states_ul}
}
\caption{States of $D$}
\label{fig:sq_weave_states}
\end{figure}

The index $m$ in (\ref{eq:r}) and (\ref{eq:r_inv}) is sometimes thought of as the ``label'' of the associated crossing \cite{gl11}, with the corresponding summand its ``weight." In this way a state of $D$ becomes a labelling of both strands and crossings with integers between $0$ and $n - 1$, and the Kronecker deltas in (\ref{eq:r}) and (\ref{eq:r_inv}) imply we need only consider states whose crossing labels are as shown in Figure \ref{fig:state_rules}.

\begin{figure}[H]
\labellist
\small\hair 2pt
\pinlabel $i$ at 53 110
\pinlabel $j$ at 180 110
\pinlabel $j-m$ at 23 40
\pinlabel $i+m$ at 205 40
\pinlabel $m$ at 85 65
\pinlabel $i$ at 407 110
\pinlabel $j$ at 534 110
\pinlabel $j+m$ at 377 40
\pinlabel $i-m$ at 559 40
\pinlabel $m$ at 439 65
\endlabellist
\centering
\includegraphics[height=1.8cm]{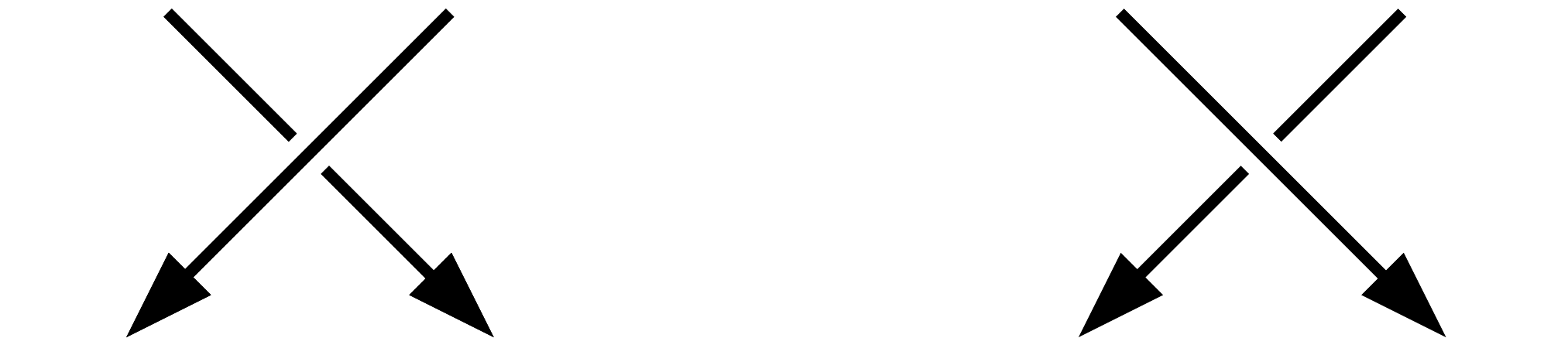}
\caption{States of crossings with non-zero weights}
\label{fig:state_rules}
\end{figure}

Assigning labels to strands and crossings of $D$ according to these rules gives the diagram in Figure \ref{fig:sq_weave_states}b---the structure of $W$ forces each crossing to have the same label in any nonzero state. We obtain the formula:

\begin{align*}
J^T_n(W;q) &= \sum_{m = 0}^{n - 1}\sum_{a,b,c,d = 0}^{n - m - 1} (R^{-1})^{b + m,a}_{a + m, b}R^{b, c + m}_{c, b + m}R_{a, d + m}^{d, a + m}(R^{-1})^{d+m,c}_{c + m, d} \\
&=  \sum_{m = 0}^{n - 1} \sum_{a,b,c,d = 0}^{n - m - 1} q^{(a - c)(d - b)} \frac{\{a + m\}!\{n - 1 - a\}!}{\{a\}!\{m\}!\{n - 1 - a - m\}!} \cdot \frac{\{b + m\}!\{n - 1 - b\}!}{\{b\}!\{m\}!\{n - 1 - b- m\}!} \\
&\ \ \ \ \ \ \ \ \cdot \frac{\{c + m\}!\{n - 1 - c\}!}{\{c\}!\{m\}!\{n - 1 - c - m\}!} \cdot \frac{\{d + m\}!\{n - 1 - d\}!}{\{d\}!\{m\}!\{n - 1 - d - m\}!}.
\end{align*}

For the remainder of the section, fix $q = e^{2\pi i/n}$. This allows us to apply the identity \cite[eq.~38]{gl11}
$$
\{k\}! = (\sqrt{-1})^{n - 1}\frac{n}{\{n - 1- k\}!}
$$
where $k \in \N$ and $0 \leq k \leq n - 1$, and the formula above becomes
\begin{equation}
\label{eq:formula}
J^T_n(W;q) = \sum_{m = 0}^{n - 1} \sum_{a,b,c,d = 0}^{n - m - 1} q^{(a - c)(d - b)} \frac{1}{(\{m\}!)^4} \cdot \Big( \frac{\{a + m\}!\{b + m\}!\{c + m\}!\{d + m\}!}{\{a\}!\{b\}!\{c\}!\{d\}!}\Big)^2.
\end{equation}

We now prove Theorem \ref{thm:main_one}.

\begin{thm}
\label{thm:main_one}
 $$
\lim_{n \to \infty} \frac{2\pi}{n} \log | J^T_n(W;e^\frac{2\pi i}{n})| = 4 v_\text{oct} = \rm{Vol}((T^2 \times I) \setminus W).
$$
\end{thm}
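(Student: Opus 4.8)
The plan is to pair the upper bound $\limsup_{n\to\infty}\frac{2\pi}{n}\log|J^T_n(W;e^{2\pi i/n})|\le 4v_\text{oct}$ already recorded in (\ref{eq:bound}) (with $k=l=1$; recall $W=W_{2,2}$ has $4$ crossings) with a matching lower bound extracted from a \emph{single} term of the state sum (\ref{eq:formula}). The key observation that makes this possible is that, at $q=e^{2\pi i/n}$, the sum (\ref{eq:formula}) is in fact a sum of \emph{nonnegative real numbers}, so there is no cancellation to fight and $|J^T_n(W;q)|$ dominates any one of its terms.

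First I would establish this positivity. At $q=e^{2\pi i/n}$ one has $\{j\}=2i\sin(\pi j/n)$, which for $1\le j\le n-1$ is $i$ times a positive real. Setting $F(m,x)=\prod_{j=x+1}^{x+m}2\sin(\pi j/n)>0$ for $x+m\le n-1$ and $|\{m\}!|=\prod_{j=1}^{m}2\sin(\pi j/n)>0$, one has $\{x+m\}!/\{x\}!=i^m F(m,x)$ and $(\{m\}!)^4=|\{m\}!|^4$, so every power of $i$ cancels in (\ref{eq:formula}) and
$$
J^T_n(W;q)=\sum_{m=0}^{n-1}\frac{1}{|\{m\}!|^4}\sum_{a,b,c,d=0}^{n-m-1}q^{(a-c)(d-b)}\,h(m,a)h(m,b)h(m,c)h(m,d),\qquad h(m,x):=F(m,x)^2>0.
$$
Writing $q^{(a-c)(d-b)}=q^{(a-c)d}\,q^{-(a-c)b}$ and regrouping, the inner sum becomes $\sum_{a,c}h(m,a)h(m,c)\big(\sum_d q^{(a-c)d}h(m,d)\big)\big(\sum_b q^{-(a-c)b}h(m,b)\big)$; since $h$ is real and $\bar q=q^{-1}$, the last two factors are complex conjugates, so the inner sum equals $\sum_{a,c}h(m,a)h(m,c)\big|\sum_d q^{(a-c)d}h(m,d)\big|^2\ge 0$. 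Hence $J^T_n(W;q)\ge 0$, so $|J^T_n(W;q)|=J^T_n(W;q)$ exceeds any one of these summands.

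Next I would extract the lower bound. Taking the summand with $a=c$ and using $\big(\sum_d h(m,d)\big)^2\ge h(m,d_0)^2$, one gets, for any $m,a,d_0$ with $a+m,\,d_0+m\le n-1$,
$$
J^T_n(W;q)\ \ge\ \frac{F(m,a)^4\,F(m,d_0)^4}{|\{m\}!|^4}.
$$
I would then take $m=\lfloor n/2\rfloor$ and $a=d_0=\lfloor n/4\rfloor$. Using the elementary Riemann-sum limit $\frac1n\log\prod_{j=x+1}^{x+m}2\sin(\pi j/n)\to\int_{x/n}^{(x+m)/n}\log(2\sin\pi t)\,dt$ (valid through the integrable singularity of $\log 2\sin$ at $0$; cf.\ \cite{gl11}), this yields
$$
\frac{2\pi}{n}\log\frac{F(m,a)^4F(m,d_0)^4}{|\{m\}!|^4}\ \longrightarrow\ 8\pi\left(2\int_{1/4}^{3/4}\log(2\sin\pi t)\,dt-\int_0^{1/2}\log(2\sin\pi t)\,dt\right)=32\,\Lambda(\pi/4),
$$
where $\Lambda(\theta)=-\int_0^\theta\log|2\sin t|\,dt$ is the Lobachevsky function and the evaluation uses $\Lambda(\pi/2)=0$ and $\Lambda(3\pi/4)=-\Lambda(\pi/4)$. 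Since $v_\text{oct}=8\Lambda(\pi/4)$, the right-hand side is $4v_\text{oct}$, giving $\liminf_{n\to\infty}\frac{2\pi}{n}\log J^T_n(W;q)\ge 4v_\text{oct}$. Together with the upper bound this forces the limit to exist and equal $4v_\text{oct}=4\cdot1\cdot1\cdot v_\text{oct}=\text{Vol}((T^2\times I)\setminus W)$ by (\ref{eq:vol}). (One checks by a routine Lagrange-multiplier computation that $m\approx n/2$, $a\approx d_0\approx n/4$ is indeed the maximizer of the relevant potential, which is why this particular term realizes the geometric bound.)

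The main obstacle is the positivity step: everything rests on tracking the powers of $i$ in (\ref{eq:formula}) and recognizing that, after summing the ``off-diagonal'' indices $b$ and $d$, the oscillatory factor $q^{(a-c)(d-b)}$ reorganizes the state sum into manifestly nonnegative blocks $\big|\sum_d q^{(a-c)d}h(m,d)\big|^2$. Once this is in place the lower bound is immediate, and the only remaining content is the standard sine-product asymptotic together with a one-variable optimization of $\Lambda$ that, pleasantly, outputs exactly $4v_\text{oct}$ and so matches the upper bound on the nose.
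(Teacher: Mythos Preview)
Your proposal is correct and follows essentially the same route as the paper: the upper bound from (\ref{eq:bound}), a positivity argument reorganizing (\ref{eq:formula}) into a sum of terms of the form $h(m,\cdot)h(m,\cdot)\bigl|\sum q^{\ast}h(m,\cdot)\bigr|^2$, extraction of the single term $m=\lfloor n/2\rfloor$, $a=b=c=d=\lfloor n/4\rfloor$, and the Garoufalidis--L\^e sine-product asymptotic. The only cosmetic difference is that the paper reaches the modulus-squared structure via a reality argument and the identity $\cos(\alpha-\beta)=\cos\alpha\cos\beta+\sin\alpha\sin\beta$, whereas you observe the complex-conjugate pairing $\sum_b q^{-(a-c)b}h=\overline{\sum_d q^{(a-c)d}h}$ directly; the resulting nonnegative decomposition and the chosen dominant term are identical.
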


\begin{proof}
By (\ref{eq:bound}), we need only show
\begin{equation}
\lim_{n \to \infty} \frac{2\pi}{n} \log | J^T_n(W;e^\frac{2\pi i}{n})| \geq 4 v_\text{oct} = \rm{Vol}((T^2 \times I) \setminus W).
\end{equation}

Let $\eta(a,b,c,d) = q^{(a - c)(d - b)}$ and
$$
\rho(a,b,c,d,m) = \frac{1}{(\{m\}!)^4} \cdot \Big( \frac{\{a + m\}!\{b + m\}!\{c + m\}!\{d + m\}!}{\{a\}!\{b\}!\{c\}!\{d\}!}\Big)^2;
$$
then (\ref{eq:formula}) becomes
\begin{equation}
\label{eq:abrev_formula}
J^T_n(W;q) = \sum_{m = 0}^{n - 1} \sum_{a,b,c,d = 0}^{n - m - 1} \eta(a,b,c,d) \cdot \rho(a,b,c,d,m).
\end{equation}
For $k \in \Z$, $0 \leq k \leq n - 1$, $\{k\} = 2i\sin(k\pi /n) = ix$ where $x$ is a non-negative real number. Thus $(\{m\}!)^4$ is a non-negative real number for all values of $m$. Furthermore, since
\begin{align*}
\frac{\{a + m\}!\{b + m\}!\{c + m\}!\{d + m\}!}{\{a\}!\{b\}!\{c\}!\{d\}!} &= \prod_{k = 1}^m \{a + k\}\{b + k\}\{c + k\}\{d + k\}\\
&= 16 \prod_{k = 1}^m \sin \frac{\pi (a + k)}{n}\sin \frac{\pi (b + k)}{n}\sin \frac{\pi (c + k)}{n}\sin \frac{\pi (d + k)}{n},
\end{align*}
$\rho(a,b,c,d,m)$ is a non-negative real number for all relevant values of $a$, $b$, $c$, $d$, and $m$.

If $a = c$, $\eta(a,b,c,d) = 1$ and $\eta(a,b,c,d) \cdot \rho(a,b,c,d,m)$ is a real number. If $a \neq c$, since $\rho(a,b,c,d,m) = \rho(a,c,b,d,m)$ and $\eta(a,b,c,d) = \eta(a,c,b,d)^{-1}$,
\begin{align*}
\eta(a,b,c,d) \rho(a,b,c,d,m) + \eta(a,c&,b,d)\rho(a,c,b,d,m) \\
\ \ \ \ \ &= \big(\eta(a,b,c,d) + \eta(a,b,c,d)^{-1} \big) \cdot \rho(a,b,c,d,m) \\
\ \ \ \ \ &= 2\cos\big(\frac{2\pi(a - c)(d - b)}{n}\big) \cdot \rho(a,b,c,d,m) \in \R.
\end{align*}
Pairing up the summands of (\ref{eq:abrev_formula}) this way we see $J^T_n(W;q)$ is a real number; in fact
\begin{equation}
\label{eq:real_formula}
J^T_n(W;q) = \text{Re}\Big(J^T_n(W;q)\Big) = \sum_{k = 0}^{n - 1} \sum_{a,b,c,d = 0}^{n - k - 1} \cos\big(\frac{2\pi(a - c)(d - b)}{n}\big) \cdot |\rho(a,b,c,d,m)|.
\end{equation}

Using the identity $\cos(\alpha - \beta) = \cos(\alpha)\cos(\beta) + \sin(\alpha)\sin(\beta)$, we rewrite (\ref{eq:real_formula}) as
\begin{align}
J^T_n(W;q) &= \sum_{m = 0}^{n - 1} \sum_{a,b,c,d = 0}^{n - m - 1} \cos\big(\frac{2\pi(a - c)(d - b)}{n}\big) \bigg| \frac{\{a + m\}!\{b + m\}!\{c + m\}!\{d + m\}!}{\{a\}!\{b\}!\{c\}!\{d\}!}\bigg|^2 \bigg|\frac{1}{\{m\}!} \bigg|^4 \nonumber \\
&= \sum_{m = 0}^{n - 1} \sum_{b,d = 0}^{n - m - 1} \bigg|\frac{1}{\{m\}!} \bigg|^4  \bigg| \frac{\{b + m\}!\{d + m\}!}{\{b\}!\{d\}!}\bigg|^2 \cdot \nu(b,d,m) \label{eq:nu_formula}
\end{align}
Where
\begin{align*}
\nu(b,d,m) &= \sum_{a,c = 0}^{n - m - 1} \cos\big(\frac{2\pi a(d - b)}{n} - \frac{2\pi c(d - b)}{n}\big) \bigg| \frac{\{a + m\}!\{c + m\}!}{\{a\}!\{m\}!}\bigg|^2 \\
&= \sum_{a,c = 0}^{n - m - 1} \cos\big(\frac{2\pi a(d - b)}{n}\big)\cos \big( \frac{2\pi c(d - b)}{n}\big) \bigg| \frac{\{a + m\}!\{c + m\}!}{\{a\}!\{m\}!}\bigg|^2 \\
&\ \ \ \ \ \ \ \ + \sum_{a,c = 0}^{n - m - 1} \sin\big(\frac{2\pi a(d - b)}{n}\big)\sin \big( \frac{2\pi c(d - b)}{n}\big) \bigg| \frac{\{a + m\}!\{c + m\}!}{\{a\}!\{m\}!}\bigg|^2 \\
&= \bigg( \sum_{a = 0}^{n - m - 1}\cos\big(\frac{2\pi a(d - b)}{n}\big) \bigg| \frac{\{a + m\}!}{\{a\}!}\bigg|^2\bigg)^2 + \bigg( \sum_{a = 0}^{n - m - 1}\sin\big(\frac{2\pi a(d - b)}{n}\big) \bigg| \frac{\{a + m\}!}{\{a\}!}\bigg|^2\bigg)^2.
\end{align*}
In particular, $\nu(b,d,m)$ is a non-negative real number for all values of $b$,$d$, and $m$.

Because each summand of (\ref{eq:nu_formula}) is non-negative and real, $J^T_n(W; q)$ is bounded below for all $n$ by the summand of (\ref{eq:nu_formula}) with $m = \lfloor n/2 \rfloor$ and $b = d = \lfloor n/4 \rfloor$. Additionally, $\nu(\lfloor n/4 \rfloor,\lfloor n/4 \rfloor,\lfloor n/2 \rfloor)$ is bounded below by the summand of the equation above with $a = \lfloor n/4 \rfloor$.  We have
\begin{align}
J^T_n(W;q) &\geq \bigg|\frac{1}{\{\lfloor n/2 \rfloor\}!} \bigg|^4  \bigg| \frac{\{\lfloor n/4 \rfloor + \lfloor n/2 \rfloor\}!\{\lfloor n/4 \rfloor + \lfloor n/2 \rfloor\}!}{\{\lfloor n/4 \rfloor\}!\{\lfloor n/4 \rfloor\}!}\bigg|^2 \nu(\lfloor n/4 \rfloor,\lfloor n/4 \rfloor,\lfloor n/2 \rfloor) \nonumber \\
&\geq \bigg|\frac{1}{\{\lfloor n/2 \rfloor \}!} \bigg|^4 \bigg| \frac{\{\lfloor n/4 \rfloor + \lfloor n/2 \rfloor \}!}{\{\lfloor n/4 \rfloor \}!}\bigg|^8 \nonumber \\
&\geq \text{min} \bigg(\bigg|\frac{1}{\{\lfloor n/2 \rfloor \}!} \bigg|^4 \bigg| \frac{\{\lfloor 3n/4 \rfloor \}!}{\{\lfloor n/4 \rfloor \}!}\bigg|^8, \bigg|\frac{1}{\{\lfloor n/2 \rfloor \}!} \bigg|^4 \bigg| \frac{\{\lfloor 3n/4 - 1 \rfloor \}!}{\{\lfloor n/4 \rfloor \}!}\bigg|^8 \bigg). \label{eq:second_last}
\end{align}
Garoufalidis and L{\^e} \cite{gl11} proved that, for $\alpha \in (0,n)$,
$$
\log|\{\lfloor \alpha \rfloor\}!| = -\frac{n}{\pi} \Lambda(\pi \frac{\alpha}{n}) + O(\log n).
$$
Here $\Lambda$ is the Lobachevsky function $\Lambda(z) = -\int_0^z \log | 2 \sin \zeta | d \zeta$ and $O(\log n)$ is an expression bounded by $C\log n$ for a constant $C$ independent of $n$.
Applying this to (\ref{eq:second_last}) gives
\begin{align*}
\lim_{n \to \infty}  \frac{2\pi}{n} \log | J^T_n(W;q) | &\geq \lim_{n \to \infty} 8 \Lambda(\frac{\pi}{2}) + 16\Lambda(\frac{\pi}{4}) - 16 \cdot \text{min}\big(\Lambda(\frac{3\pi}{4}), \Lambda(\frac{(3n - 4)\pi}{4n}) \big) + \frac{O\log(n)}{n} \\
&= 4(2\Lambda(\frac{\pi}{2}) + 4\Lambda(\frac{\pi}{4}) - 4\Lambda(\frac{3\pi}{4})) = 4v_\text{oct},
\end{align*}
proving the theorem.
\end{proof}

\section{Generalizing the Volume Conjecture For Links in $T^2 \times I$}

\subsection{Simplicial Volume}

With the original Volume Conjecture \ref{thm:vol_conj} in mind, we generalize our Volume Conjecture \ref{thm:conj} to links which may not be hyperbolic.
\begin{conj}
\label{thm:broad_conj}
For any link $L \subset T^2 \times I$ such that $(T^2 \times I) \setminus L$ is irreducible,
\begin{equation}
\label{eq:broad_conj}
\lim_{n \to \infty} \frac{2\pi}{n} \log | J^T_n(L;e^\frac{2\pi i}{n})| = \rm{Vol}((T^2 \times I) \setminus L)
\end{equation}
where $n > 0$ runs over all odd integers.
\end{conj}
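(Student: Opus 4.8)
The plan is to reduce Conjecture~\ref{thm:broad_conj} to the hyperbolic case --- Conjecture~\ref{thm:conj} --- and to the original Volume Conjecture~\ref{thm:vol_conj}, by cutting $(T^2 \times I) \setminus L$ along its JSJ system of essential tori. Since the complement is irreducible it splits into geometric pieces, each either hyperbolic (contributing its volume to ${\rm Vol}((T^2 \times I)\setminus L)$) or Seifert fibered (contributing nothing), and~(\ref{eq:broad_conj}) asserts that the exponential growth rate of $|J^T_n(L;e^{2\pi i/n})|$ along odd $n$ equals the sum of these contributions. On the invariant side one therefore wants a ``gluing formula'': an expression for the growth rate of the state sum of $L$ as a sum of rates attached to the JSJ pieces, together with the vanishing of the Seifert contributions.

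First I would assemble the gluing machinery from tools already in this paper. A JSJ torus that bounds a solid-torus neighborhood of a component is handled by the satellite/cabling expansion: the cabling formula (Theorem~\ref{thm:cabling}) combined with the skein-theoretic description (Theorem~\ref{thm:skein_op}) writes $J^T_n(L)$ as a sum, over colorings of that torus, of $\hat{J}^T_{{\bf 2},q}$ on cables, which peels such a piece off its companion; the connect-sum case $L = L'\#K$ is exactly Theorem~\ref{thm:relationship}, reducing that piece to Conjecture~\ref{thm:vol_conj} via $J^T_n(L) = n\,J_n(L')$. For a general hyperbolic piece --- which, using the homeomorphism $S^3\setminus\hat L\cong(T^2\times I)\setminus L$ from the introduction, may be viewed as a JSJ piece of a link complement in $S^3$ or in $T^2\times I$ --- one then needs its growth rate to equal its hyperbolic volume, an instance of the (suitably generalized) Volume Conjecture; and for a Seifert-fibered piece one needs its contribution to grow only polynomially in $n$, the analogue of the classical fact that torus knots have colored Jones polynomials of polynomial growth. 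Summing over pieces gives the lower bound in~(\ref{eq:broad_conj}); for the matching upper bound one must sharpen Proposition~\ref{thm:gl}, whose bound $c\cdot v_\text{oct}$ exceeds ${\rm Vol}$ in general, by showing the dominant states of the state sum concentrate on the hyperbolic pieces.

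The hard part will be the gluing step. Quantum invariants are not local with respect to incompressible tori, and controlling the exponential growth rate of the state sum after cutting along a JSJ torus --- ruling out that cross terms between pieces inflate the rate or cause cancellation, and proving that the Seifert pieces contribute subexponentially --- is precisely the obstruction that keeps Conjecture~\ref{thm:vol_conj} open for satellite and composite knots. Indeed Corollary~\ref{thm:cor_imp} shows Conjecture~\ref{thm:broad_conj} already implies Conjecture~\ref{thm:vol_conj}, so a proof in full generality is beyond present techniques. A realistic program is instead to verify~(\ref{eq:broad_conj}) for explicit families whose pieces are individually accessible --- the weaves $W_{2k,2l}$ of Section~6, links $L'\#K$ via Theorem~\ref{thm:relationship}, and links whose JSJ decomposition glues a single small hyperbolic piece to a thickened-torus piece --- using these as test cases for the gluing heuristic, while tracking the restriction to odd $n$, which is needed to avoid the degeneracy already visible in~(\ref{eq:zero_n}) and to keep the relevant roots of unity and representations well behaved.
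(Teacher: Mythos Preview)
The statement you are attempting is a \emph{conjecture}, not a theorem: the paper does not prove Conjecture~\ref{thm:broad_conj} and does not claim to. There is no proof in the paper to compare your proposal against. What the paper does instead is exactly what you describe in your final paragraph as a ``realistic program'': it verifies the conjecture for explicit accessible families --- the $2$-by-$2$ weave (Theorem~\ref{thm:main_one}), links of the form $L'\#K$ with $L'$ VC-verified (Theorem~\ref{thm:hitoshi}), and split unions of these --- and it records the implication Conjecture~\ref{thm:broad_conj} $\Rightarrow$ Conjecture~\ref{thm:vol_conj} (Corollary~\ref{thm:cor_imp}) as evidence that a full proof is out of reach.

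Your proposal is not a proof but a strategy sketch, and you already identify the fatal obstruction yourself: the ``gluing formula'' you need along JSJ tori is precisely the unsolved problem that blocks the original Volume Conjecture for satellite and composite knots, and since Conjecture~\ref{thm:broad_conj} implies Conjecture~\ref{thm:vol_conj}, any argument establishing the former in full would in particular resolve the latter. So the gap is not a missing lemma or a wrong decomposition; it is that the target is genuinely open, and your reduction to hyperbolic pieces plus Seifert vanishing plus controlled cross-terms is a reasonable heuristic outline of what a proof would have to contain, not a proof. The honest deliverable here is the partial-results program you sketch at the end, which is what the paper actually carries out.
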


As in Conjecture \ref{thm:vol_conj}, Vol refers to \emph{simplicial volume}---the sum of the volumes of the hyperbolic pieces in the JSJ decomposition of $(T^2 \times I) \setminus L$. By \emph{irreducible}, we mean that every smooth embedded $2$-sphere in $(T^2 \times I) \setminus L$ bounds a $3$-ball. The irreducibility condition and the restriction to odd $n$ are, in fact, necessary if one wishes to generalize the original Volume Conjecture \ref{thm:vol_conj} from knots to links. For a link $L \subset S^3$, $S^3 \setminus L$ being irreducible is equivalent to $L$ not being a split link, a class of links for which the colored Jones polynomial is known to vanish \cite{mm01}. Separately, Van der Veen has constructed a class of non-split links called \emph{Whitehead chains} for which the colored Jones polynomial vanishes at even values of $n$ \cite{v08}. Before discussing the necessity of these two conditions in Conjecture \ref{thm:broad_conj}, we give some positive results.

Call $L' \subset S^3$ a \emph{VC-verified link} if the Volume Conjecture \ref{thm:vol_conj} is known to hold for $L'$. That is, $L'$ is VC-verified if
$$
\lim_{n \to \infty} \frac{2\pi}{n} \log | J_n(L';e^\frac{2\pi i}{n})| = \rm{Vol}(S^3 \setminus L'),
$$
where the limit runs over odd $n > 0$. VC-verified links include the figure eight knot, the Borromean rings, and others---see \cite[Ch.~3]{my18} for a somewhat recent, comprehensive list.

\begin{thm}
\label{thm:hitoshi}
Let $L' \subset S^3$ be a VC-verified link, and consider an inclusion of $L'$ in an embedded $2$-sphere in $T^2 \times I$. Let $K \subset T^2 \times I$ be a knot projecting to an essential, simple closed curve in $T^2 \times \{0\}$, and let $L$ be a connect sum $L = L' \# K$. Then Conjecture \ref{thm:broad_conj} holds for $L$.
\end{thm}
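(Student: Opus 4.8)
The plan is to reduce the statement to the identity $J^T_n(L;q) = n\cdot J_n(L';q)$ of \refthm{relationship}, and then to a computation of the simplicial volume of $(T^2\times I)\setminus L$ as a connect-sum complement. The analytic half is immediate; all of the substance is topological.

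For the analytic input: by \refthm{relationship}, for $q = e^{2\pi i/n}$ we have
\[
\frac{2\pi}{n}\log|J^T_n(L;q)| = \frac{2\pi}{n}\log n + \frac{2\pi}{n}\log|J_n(L';q)|.
\]
As $n\to\infty$ through odd integers the first summand tends to $0$, while the second tends to $\mathrm{Vol}(S^3\setminus L')$ because $L'$ is VC-verified. Hence the left side of (\ref{eq:broad_conj}) equals $\mathrm{Vol}(S^3\setminus L')$, and the theorem follows once I show $\mathrm{Vol}((T^2\times I)\setminus L) = \mathrm{Vol}(S^3\setminus L')$.

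For the topological input I would use the standard structure of a connect-sum complement. Since $T^2\times I$ is irreducible, the embedded $2$-sphere carrying $L'$ bounds a ball $B$; after pushing $L'$ into $\mathrm{int}\,B$ and isotoping $K$ off $B$, the connect sum is realized by a band meeting $\partial B$ once, and there is a $2$-sphere meeting $L$ transversely in two points whose associated swallow--follow torus $T\subset(T^2\times I)\setminus L$ is incompressible. (Incompressibility uses that $K$ is essential, so $(T^2\times I)\setminus K$ has incompressible boundary, and that $L'$ is non-split, being VC-verified, so $S^3\setminus L'$ has incompressible boundary.) Just as for composite knot exteriors in $S^3$, the JSJ decomposition of $(T^2\times I)\setminus L$ then consists of the JSJ pieces of $(T^2\times I)\setminus K$, the JSJ pieces of $S^3\setminus L'$, and one composing space $P\times S^1$ with $P$ a pair of pants, so that simplicial volume is additive:
\[
\mathrm{Vol}\bigl((T^2\times I)\setminus L\bigr) = \mathrm{Vol}\bigl((T^2\times I)\setminus K\bigr) + \mathrm{Vol}(S^3\setminus L') + \mathrm{Vol}(P\times S^1).
\]
Choosing coordinates on $T^2$ in which $K$ projects to $S^1\times\{\mathrm{pt}\}$ exhibits $(T^2\times I)\setminus K$ as $S^1\times P$, a Seifert fibered space; $P\times S^1$ is Seifert fibered as well, so both are graph manifolds of zero simplicial volume. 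This yields $\mathrm{Vol}((T^2\times I)\setminus L) = \mathrm{Vol}(S^3\setminus L')$, which combined with the previous paragraph proves (\ref{eq:broad_conj}). Finally, $(T^2\times I)\setminus L$ is a gluing of irreducible manifolds along incompressible tori, hence irreducible, so $L$ is genuinely within the scope of Conjecture \ref{thm:broad_conj}.

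I expect the main obstacle to be precisely the structural claim about the connect sum: that the swallow--follow torus $T$ is incompressible and that the JSJ decomposition of $(T^2\times I)\setminus L$ splits as the JSJ decompositions of $(T^2\times I)\setminus K$ and $S^3\setminus L'$ together with the Seifert fibered composing space — equivalently, that no hyperbolic volume is created or destroyed by the connect sum. Once that is granted, identifying $(T^2\times I)\setminus K$ and the composing space as zero-volume Seifert fibered spaces and passing through \refthm{relationship} are routine.
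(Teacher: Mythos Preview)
Your argument is correct and matches the paper's in its analytic half (both invoke \refthm{relationship} and let the $\frac{2\pi}{n}\log n$ term vanish). For the topological half the paper takes a shorter route: it observes that $(T^2\times I)\setminus K$ is homeomorphic to $S^3\setminus H'$ for an explicit three-component link $H'$, so that $(T^2\times I)\setminus L \cong S^3\setminus(L'\# H')$, and then quotes the additivity of simplicial volume under connect sum \cite{s81} together with $\mathrm{Vol}(S^3\setminus H')=0$. Your swallow--follow torus argument accomplishes the same thing intrinsically, without passing to $S^3$: you exhibit the JSJ pieces directly and identify the non-$L'$ pieces as Seifert fibered. This is more hands-on but also more self-contained, and it has the minor bonus that you explicitly verify irreducibility of $(T^2\times I)\setminus L$, which the paper leaves implicit. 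Either way the content is the same---no hyperbolic volume is created in the connect sum---and your identification $(T^2\times I)\setminus K \cong S^1\times P$ is exactly the fact underlying the paper's $\mathrm{Vol}(S^3\setminus H')=0$.
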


See Figure \ref{fig:t_to_t} for an example where $L'$ is the figure eight knot and $K$ is a meridian. The main ingredient in the proof of Theorem \ref{thm:hitoshi} is the following relationship between $J^T_n$ and $J_n$.

\begin{thm}
\label{thm:relationship}
Let $L'$ be a link in $S^3$, and consider an inclusion of $L'$ in an embedded $2$-sphere in $T^2 \times I$. Let $K \subset T^2 \times I$ be a knot projecting to an essential, simple closed curve in $T^2 \times \{0\}$, and let $L$ be a connect sum $L = L' \# K$. Then
$$
J^T_n(L;q) = n \cdot J_n(L';q).
$$
\end{thm}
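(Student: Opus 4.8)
The plan is to reduce the computation of $J^T_n(L;q)$ to an application of Schur's lemma by cutting $L$ open along a curve parallel to $K$, using the ``cut-open tangle'' picture from the proof of Lemma~\ref{thm:basis_invar}. First I would invoke Proposition~\ref{thm:follow_up} to put $D\subset T^2$ in a convenient standard position: $K$ is the simple closed curve lifting to the vertical lines $\{x\in\tfrac12+\Z\}\subset\R^2$, while $L'$ (which lies on an embedded $2$-sphere in $T^2\times I$, hence, since $T^2\times I$ is irreducible, inside a ball) sits in a small round disk in the interior of the fundamental domain $I^2$, disjoint from $K$ and from $\partial I^2$, with the band realizing $L=L'\#K$ contained in that interior. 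With this diagram no point of $\widetilde D=\pi^{-1}(D)$ meets the vertical edges $\{0\}\times I$ or $\{1\}\times I$, so the construction in the proof of Lemma~\ref{thm:basis_invar} adds no connecting curves $c_q$ and hence no virtual crossings. Its output is therefore an honest $(1,1)$-tangle diagram $T$, all components colored $V^n$, which is visibly a trivial vertical strand with $L'$ connect-summed into it, and $\Phi^{\mathscr A_q,R,\mu}\bigl(L(n,\dots,n)\bigr)=\operatorname{Tr}(\mathcal F_T)$, where $\mathcal F_T\colon V^n\to V^n$ is the operator invariant of Theorem~\ref{thm:fund_tangles} (genuinely $\mathscr A_q$-linear, since there are no virtual crossings / transpositions in $T$).

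Next I would use irreducibility. Treating $q$ as an indeterminate (legitimate by the remark following Definition~\ref{def:simple_cjp}), $V^n$ is an irreducible $\mathscr A_q$-module, so Schur's lemma forces $\mathcal F_T=\lambda(q)\,\id_{V^n}$ for a scalar $\lambda$. Hence $\Phi(L(n,\dots,n))=\operatorname{Tr}(\lambda\,\id_{V^n})=n\lambda$, and Definition~\ref{def:simple_cjp} gives $J^T_n(L;q)=(q^{(n^2-1)/4})^{-w(D)}n\lambda$. To finish I would identify $\lambda$ with $J_n(L';q)$ via the standard relation between a $(1,1)$-tangle operator and the colored Jones polynomial of its closure. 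The closure $\widehat T\subset S^3$ obtained by rejoining the two endpoints of $T$ by a trivial planar arc is $L'$: the strand of $T$ coming from $K$ is an unknotted arc, so closing it up amounts to connect-summing one component of $L'$ with an unknot, leaving $L'$. Choosing the diagram $D'$ of $L'$ to be the subdiagram of $D$ carrying all the crossings, so that $w(D')=w(D)$, the tangle-operator closure computes the quantum trace, so $\mathcal F_{L'(n,\dots,n)}=\operatorname{qtr}_{V^n}(\mathcal F_T)=\lambda\operatorname{qtr}(\id_{V^n})=\lambda\,[n]$. Comparing with equation~(\ref{eq:tangle_jones}) gives $\lambda=(q^{(n^2-1)/4})^{w(D)}J_n(L';q)$; substituting back, the writhe factors cancel and
$$
J^T_n(L;q)=n\,J_n(L';q),
$$
first as an identity of Laurent polynomials in $q$ and hence at every value of $q$. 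Conceptually, the factor $n$ is exactly the ratio between the ordinary dimension $\dim V^n=n$ used by the pseudo-operator trace and the quantum dimension $\dim_q V^n=[n]$ used by $J_n$, which is compensated by the absence of the $1/[n]$ factor in the definition of $J^T_n$.

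I expect the one genuinely delicate point to be the first step: verifying that in standard position the construction of Lemma~\ref{thm:basis_invar} produces no virtual crossings and that the resulting tangle really is ``the trivial strand with $L'$ inserted''. This is the only place the hypotheses are used — essentiality of $K$ lets us make it disjoint from the vertical edges of the fundamental domain, and $L'$ lying on an embedded $2$-sphere lets us confine it to the interior disk. The remaining steps are routine bookkeeping: one checks that $q$ may be taken generic so that $V^n$ is irreducible and $[n]\neq 0$ (both sides being honest Laurent polynomials, the specialization at roots of unity is automatic), that the writhe normalizations in Definition~\ref{def:simple_cjp} and equation~(\ref{eq:tangle_jones}) cancel once $D'$ is chosen as above, and that forming the connect sum requires an orientation choice which is immaterial by Theorem~\ref{thm:orientation}.
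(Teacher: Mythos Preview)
Your proposal is correct and follows essentially the same route as the paper: use Proposition~\ref{thm:follow_up} to make $K$ a meridian (vertical line in the fundamental domain), invoke the trace construction from the proof of Lemma~\ref{thm:basis_invar} to view $L$ as a $(1,1)$-tangle with operator $\phi\colon V^n\to V^n$, apply Schur's lemma to get $\phi=\lambda\,\id$, and identify $\lambda$ with $J_n(L';q)$ via the standard $(1,1)$-tangle interpretation of the colored Jones polynomial. The paper's version is terser---it cites \cite[Lem.~3.9]{km91} for the scalar identification and does not spell out the absence of virtual crossings or the generic-$q$ manoeuvre---but the argument is the same.
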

\begin{proof}
Using Proposition \ref{thm:follow_up}, we assume $K$ is a meridian. Then we can choose a diagram $D \subset T^2$ for $L$ and a lift, $\tilde{D}$, of $D$ to $\R^2$ such that $D' = \tilde{D} \cap I^2$ is a diagram of $L'$ as a $(1,1)$-tangle. (See Figure \ref{fig:t_to_t}, where $L'$ is the figure eight knot.) Coloring $D'$ by $V^n$, Theorem \ref{thm:fund_tangles} associates an $\mathscr{A}$-linear map $\phi : V^n \to V^n$ to $D'$. The irreducibility of $V^n$ implies $\phi$ is a scalar multiple of the identity, and, after accounting for writhe, this scalar is $J_n(L';q)$---see \cite[Lem. 3.9]{km91} and \cite{gl11, my18}.

On the other hand, by the proof of Lemma \ref{thm:basis_invar}, $J^T_n(L, q)$ is the trace of $\phi$ (corrected for writhe). We conclude 
$$
J^T_n(L;q) = n \cdot J_n(L';q).
$$
\end{proof}

\begin{figure}[H]
\centering
\includegraphics[height=4cm]{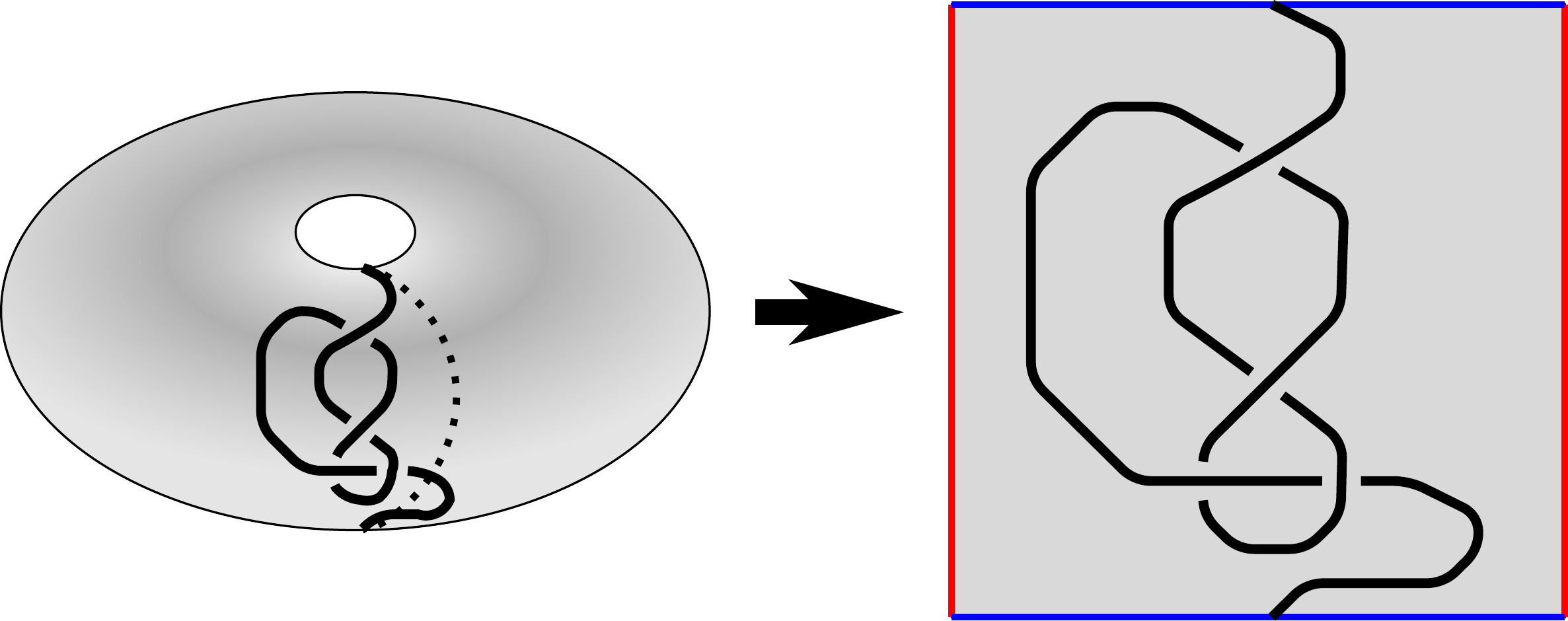}
\caption{Composing the figure eight knot with a meridian in $T^2 \times I$}
\label{fig:t_to_t}
\end{figure}

Theorem \ref{thm:hitoshi} follows.

\begin{proof}[Proof of Theorem \ref{thm:hitoshi}]
By Theorem \ref{thm:relationship}, $J^T_n(L;q) = n \cdot J_n(L';q)$. Therefore
\begin{equation}
\label{eq:jones_equality}
\lim_{n \to \infty} \frac{2\pi}{n} \log | J^T_n(L;e^\frac{2\pi i}{n})| = \lim_{n \to \infty} \frac{2\pi}{n} \log | J_n(L';e^\frac{2\pi i}{n})|.
\end{equation}
The complement $(T^2 \times I) \setminus K$ is homeomorphic to $S^3 \setminus H'$, where $H'$ is the link shown in Figure \ref{fig:tangle_hopf}a. This implies $(T^2 \times I) \setminus L$ is homeomorphic to $S^3 \setminus (L' \# H')$, where the composition is formed as in Figure \ref{fig:tangle_hopf}b. By \cite{s81},
\begin{equation}
\label{eq:simp_vols}
\rm{Vol}((T^2 \times I) \setminus L) = \rm{Vol}(S^3 \setminus (L' \# H')) =  \rm{Vol}(S^3 \setminus L') + \rm{Vol}(S^3 \setminus H') = \rm{Vol}(S^3 \setminus L'),
\end{equation}
and combining (\ref{eq:jones_equality}) and (\ref{eq:simp_vols}) gives
$$
\lim_{n \to \infty} \frac{2\pi}{n} \log | J^T_n(L;e^\frac{2\pi i}{n})| = \lim_{n \to \infty} \frac{2\pi}{n} \log | J_n(L';e^\frac{2\pi i}{n})| = \rm{Vol}(S^3 \setminus L') = \rm{Vol}((T^2 \times I) \setminus L).
$$
\end{proof}

\begin{figure}[H]
\centering
\subcaptionbox{}[.48\textwidth]{
\includegraphics[height=3.5cm]{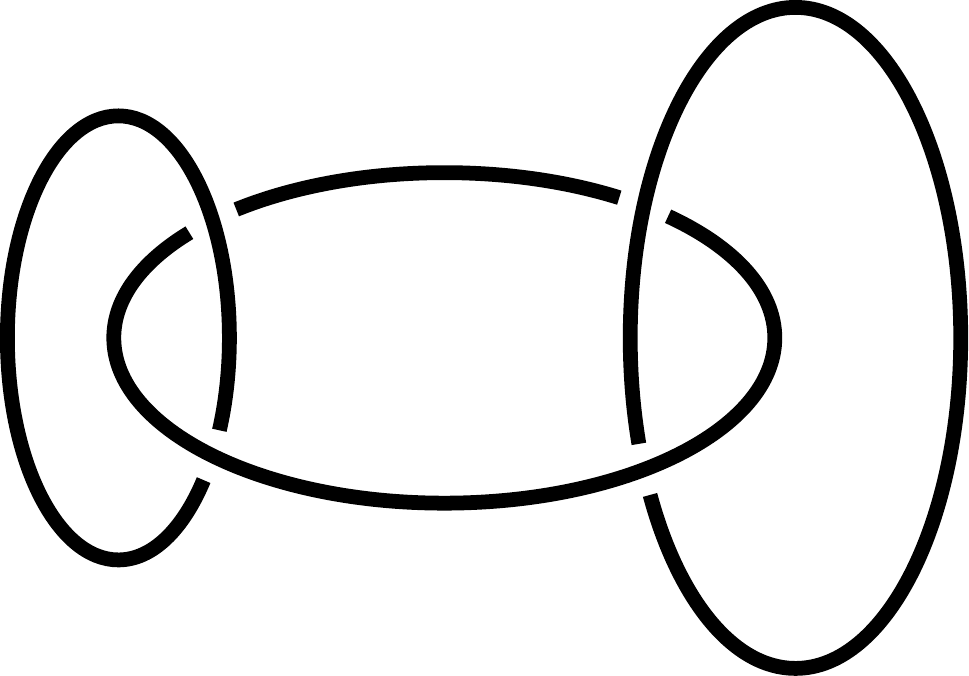}
}
\subcaptionbox{}[.48\textwidth]{
\includegraphics[height=3.5cm]{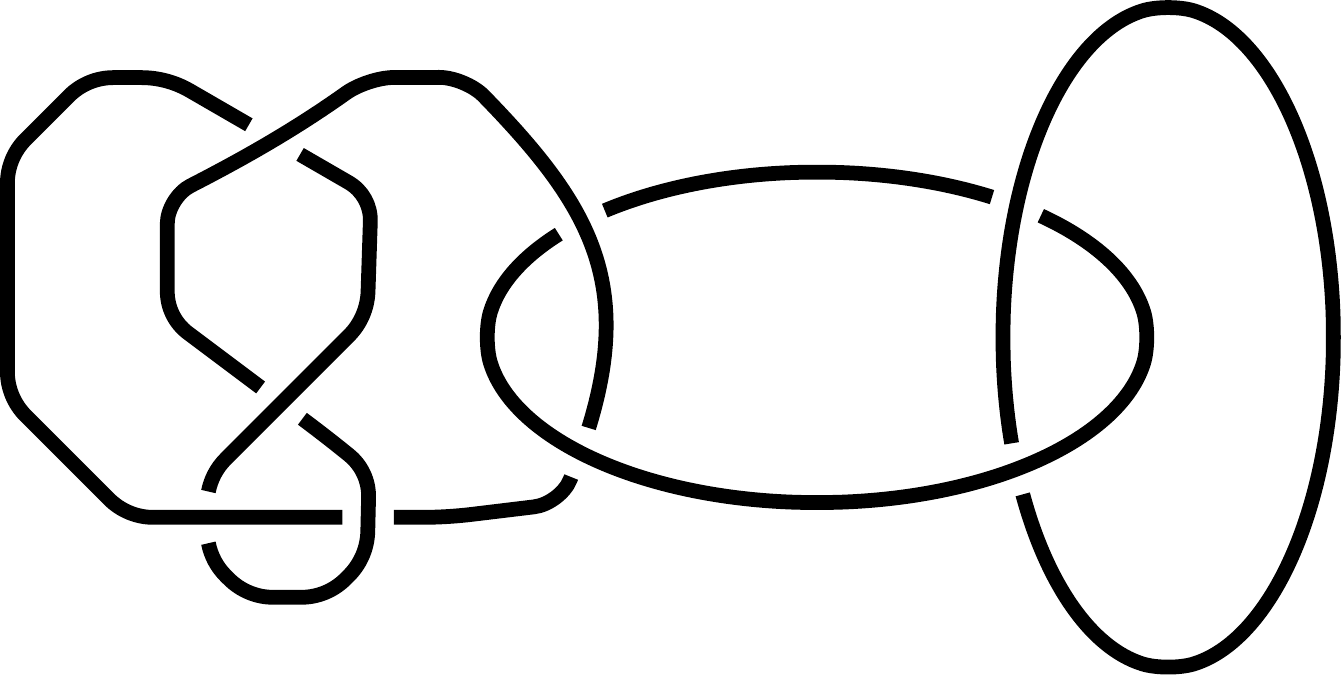}
}
\caption{The links $H'$ and $L' \# H'$ in $S^3$, from the proof of Theorem \ref{thm:hitoshi}, with $L'$ the figure eight knot}
\label{fig:tangle_hopf}
\end{figure}

We've shown any positive result for the original Volume Conjecture \ref{thm:vol_conj} gives a positive result for Conjecture \ref{thm:broad_conj}. The proof also shows why restricting to odd $n$ is necessary---if we let $L' \subset S^3$ be a Whitehead chain, as in \cite{v08}, the link $L' \# K \subset T^2 \times I$ (defined as above) will satisfy Conjecture \ref{thm:broad_conj} but the toroidal colored Jones polynomial will vanish for even $n$. 

Using the nice behavior of simplicial volume and the toroidal colored Jones polynomial under split unions of links, we can push the result of Theorem \ref{thm:hitoshi} further. We define a \emph{split union} $L = L_1 \sqcup L_2$ of links $L_1, L_2 \subset T^2 \times I$ to be a union such that $L$ admits a torus diagram which is a disjoint union of diagrams of $L_1$ and $L_2$. Additionally, define a \emph{torus link} to be a link in $T^2 \times I$ with a diagram consisting of a set of disjoint, simple closed curves in $T^2$.

\begin{cor}
Let $L_1', L_2', \dots, L_m' \subset S^3$ be VC-verified links, and define $K$ as in Theorem \ref{thm:hitoshi}. Let $L_i = L_i' \# K$ for $i = 1, \dots, m$. Then Conjecture \ref{thm:broad_conj} holds for the split union $L = L_1 \sqcup L_2 \sqcup \dots \sqcup L_m$. In particular, Conjecture \ref{thm:broad_conj} holds for all torus links with no nullhomotopic components.
\end{cor}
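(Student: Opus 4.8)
The plan is to combine three inputs: multiplicativity of $J^T_n$ under split unions, additivity of simplicial volume under split unions in $T^2\times I$, and Theorem~\ref{thm:hitoshi} applied to each $L_i$. First I would record the multiplicativity of the invariant. If $D=D_1\sqcup\cdots\sqcup D_m$ is a torus diagram realizing the split union, then for any fixed covering map $\pi$ the set of critical points of $D$ is the disjoint union of the critical point sets of the $D_i$, a state of $D$ is precisely a tuple of states of the $D_i$, and the weight of such a state is the product of the corresponding weights; hence $\Phi(D(\mathbf n))=\prod_i\Phi(D_i(\mathbf n))$. This is the multiplicativity on disjoint diagrams already used in the proof of Lemma~\ref{thm:pre_kauffman}, and since writhe is additive under disjoint union the correction factor of Definition~\ref{def:simple_cjp} also factors, giving $J^T_n(L;q)=\prod_{i=1}^m J^T_n(L_i;q)$ for all $n$. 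Therefore $\tfrac{2\pi}{n}\log|J^T_n(L;q)|=\sum_{i=1}^m \tfrac{2\pi}{n}\log|J^T_n(L_i;q)|$ once $n$ is large enough that no factor vanishes (by Theorem~\ref{thm:relationship}, $|J^T_n(L_i;q)|=n\,|J_n(L_i';q)|$, which grows exponentially along odd $n$ because $L_i'$ is VC-verified and non-split). Letting $n\to\infty$ over odd $n$ and applying Theorem~\ref{thm:hitoshi} to each summand yields
$$
\lim_{n\to\infty}\frac{2\pi}{n}\log|J^T_n(L;e^{2\pi i/n})|=\sum_{i=1}^m \mathrm{Vol}\big((T^2\times I)\setminus L_i\big).
$$

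Next I would establish the matching topological identity $\mathrm{Vol}((T^2\times I)\setminus L)=\sum_i\mathrm{Vol}((T^2\times I)\setminus L_i)$ together with the irreducibility of $(T^2\times I)\setminus L$ demanded by Conjecture~\ref{thm:broad_conj}. By Proposition~\ref{thm:follow_up} we may take $K$ to be a meridian, so each $L_i$ has a component parallel to a fixed meridian; after an isotopy the $D_i$ lie in disjoint annular regions $\mathcal A_i\subset T^2$ arranged cyclically around the torus and separated by $m$ disjoint essential simple closed curves $\gamma_1,\dots,\gamma_m$ disjoint from $L$. The vertical annuli $\gamma_i\times I$ are incompressible in $T^2\times I$ (their cores are nontrivial in $\pi_1$), and cutting $(T^2\times I)\setminus L$ along them produces the disjoint union of the manifolds $X_i$ obtained by cutting $(T^2\times I)\setminus L_i$ along one such annulus. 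Each $(T^2\times I)\setminus L_i\cong S^3\setminus(L_i'\# H')$ is irreducible, being the complement of a connect sum of non-split links (the proof of Theorem~\ref{thm:hitoshi} supplies the homeomorphism and $S^3\setminus H'$ is itself irreducible), so each $X_i$ is irreducible; reassembling the $X_i$ along incompressible annuli preserves irreducibility by the standard innermost-circle argument on an embedded sphere. For the volume, cutting along an essential annulus does not change the hyperbolic pieces of the JSJ decomposition (essential annuli lie in the Seifert-fibered part), so $\mathrm{Vol}((T^2\times I)\setminus L_i)=\mathrm{Vol}(X_i)$ and $\mathrm{Vol}((T^2\times I)\setminus L)=\sum_i\mathrm{Vol}(X_i)$; see \cite{s81}. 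Combining this with the previous paragraph proves Conjecture~\ref{thm:broad_conj} for $L$. I expect this step — confirming that the split-union complement is irreducible and that its simplicial volume splits cleanly along the vertical annuli $\gamma_i\times I$ — to be the main obstacle, since it requires a genuine (if routine) JSJ/Gromov-norm argument rather than a formal manipulation; the analytic half is immediate given Theorem~\ref{thm:hitoshi}.

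Finally, for the ``in particular'' claim, a torus link $L$ with no nullhomotopic components admits a crossingless diagram consisting of disjoint essential simple closed curves in $T^2$, which are automatically all parallel (disjoint essential curves on a torus share a slope). Such a diagram is a split union of single essential curves, so by Proposition~\ref{thm:why_two} and the multiplicativity above, $J^T_n(L;q)=n^{\ell}$ where $\ell$ is the number of components; hence $\tfrac{2\pi}{n}\log|J^T_n(L;e^{2\pi i/n})|=\tfrac{2\pi\ell}{n}\log n\to 0$. On the other hand $(T^2\times I)\setminus L$ is a Seifert-fibered space (fibered by circles over an annulus with $\ell$ punctures), hence has zero simplicial volume, and it is irreducible since any embedded sphere bounds a ball in $T^2\times I$ that cannot contain an essential curve. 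Thus Conjecture~\ref{thm:broad_conj} holds for $L$; alternatively, this follows from the main statement by taking every $L_i'$ to be an unknot, which is VC-verified.
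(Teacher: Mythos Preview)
Your proof is correct and follows the same strategy as the paper: establish multiplicativity of $J^T_n$ under split union, additivity of simplicial volume under split union, and then invoke Theorem~\ref{thm:hitoshi} for each factor, with the torus-link case handled either by taking each $L_i'$ to be the unknot or by the direct computation via Proposition~\ref{thm:why_two}. The paper merely asserts the two key identities and says ``the result follows just as in Theorem~\ref{thm:hitoshi},'' whereas you supply the details (the state-sum factorization, the annular JSJ argument for volume, and the irreducibility check that the paper leaves implicit).
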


\begin{proof}
The result follows just as in Theorem \ref{thm:hitoshi} after checking that
$$
J^T_n(L) = J^T_n(L_1) \cdot J^T_n(L_2) \cdots J^T_n(L_m)
$$
and
$$
\rm{Vol}(L) = \rm{Vol}(L_1) + \rm{Vol}(L_2) + \cdots + \rm{Vol}(L_m).
$$
To prove the second statement, let $L_i'$ be the unknot for all $i$---every torus link with no nullhomotopic components can be obtained this way. Alternatively, one could use Proposition \ref{thm:why_two} and a direct computation. The result also holds for torus links with nullhomotopic components (see Proposition \ref{thm:nullh} below), but the complement of such a link in $T^2 \times I$ is not irreducible.
\end{proof}

If we view links in the thickened torus as generalizations of $(1,1)$-tangles, as the proof of Theorem \ref{thm:relationship} suggests, and think of the colored Jones polynomial as an invariant of $(1,1)$-tangles, the toroidal colored Jones polynomial becomes a generalization of the colored Jones polynomial rather than a toroidal analogue. This view is supported by Corollary \ref{thm:cor_imp} below, which shows Conjecture \ref{thm:broad_conj} implies the original Volume Conjecture \ref{thm:vol_conj}.

\begin{cor}
\label{thm:cor_imp}
For knots, Conjecture \ref{thm:broad_conj} implies the original Volume Conjecture \ref{thm:vol_conj}.
\end{cor}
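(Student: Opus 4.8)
The plan is to run the construction of Theorem~\ref{thm:hitoshi} in reverse. Given a knot $K' \subset S^3$, include $K'$ in an embedded $2$-sphere in $T^2 \times I$, choose a knot $K \subset T^2 \times I$ projecting to an essential simple closed curve in $T^2 \times \{0\}$, and form the connect sum $L = K' \# K$. I would then apply the hypothesized Conjecture~\ref{thm:broad_conj} to $L$ and translate the resulting statement back to $S^3$ using Theorem~\ref{thm:relationship} together with the homeomorphism of complements established in the proof of Theorem~\ref{thm:hitoshi}.

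First I would verify that Conjecture~\ref{thm:broad_conj} actually applies to $L$, i.e.\ that $(T^2 \times I) \setminus L$ is irreducible. Since $L$ is obtained from $K$ by the local operation (supported in a ball) of connect-summing with $K'$, and since knot complements in $S^3$ are always irreducible, it suffices that $(T^2 \times I) \setminus K$ be irreducible; equivalently, one may invoke the homeomorphism $(T^2 \times I) \setminus L \cong S^3 \setminus (K' \# H')$ from the proof of Theorem~\ref{thm:hitoshi} together with the fact that $K' \# H'$ is non-split. Granting this, Conjecture~\ref{thm:broad_conj} yields
\[
\lim_{n} \frac{2\pi}{n}\log\big|J^T_n(L;e^{2\pi i/n})\big| = \mathrm{Vol}\big((T^2 \times I) \setminus L\big),
\]
the limit taken over odd $n > 0$.

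Next I would substitute Theorem~\ref{thm:relationship}, which gives $J^T_n(L;q) = n \cdot J_n(K';q)$. Applying $\tfrac{2\pi}{n}\log|\,\cdot\,|$ introduces only the term $\tfrac{2\pi}{n}\log n$, which tends to $0$, so the left-hand limit above equals $\lim_{n} \tfrac{2\pi}{n}\log|J_n(K';e^{2\pi i/n})|$ over odd $n$. On the geometric side, the same homeomorphism of complements and additivity of simplicial volume under connect sum (as in \cite{s81}, used already in \eqref{eq:simp_vols}), combined with $\mathrm{Vol}(S^3 \setminus H') = 0$, give $\mathrm{Vol}((T^2 \times I) \setminus L) = \mathrm{Vol}(S^3 \setminus K')$. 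Chaining these equalities produces $\lim_{n} \tfrac{2\pi}{n}\log|J_n(K';e^{2\pi i/n})| = \mathrm{Vol}(S^3 \setminus K')$ over odd $n$, which is Conjecture~\ref{thm:vol_conj} for $K'$.

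The point requiring the most care is the quantifier on $n$: Conjecture~\ref{thm:broad_conj} is posed only for odd $n$, so the argument delivers the volume-conjecture limit for $K'$ along odd $n$ --- which, as the discussion following Conjecture~\ref{thm:broad_conj} explains, is the natural formulation once one passes from knots to links, and which agrees with Conjecture~\ref{thm:vol_conj} in the knot setting where the colored Jones polynomial does not vanish. The remaining point, essentially bookkeeping, is to state explicitly why $(T^2 \times I) \setminus L$ is irreducible so that the hypothesis of Conjecture~\ref{thm:broad_conj} is genuinely satisfied; this follows from the locality of the connect-sum operation and the irreducibility of knot complements in $S^3$.
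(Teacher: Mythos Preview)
Your argument is correct and follows exactly the paper's approach: form $L = K' \# K$, apply Conjecture~\ref{thm:broad_conj} to $L$, then use Theorem~\ref{thm:relationship} and the volume identity~\eqref{eq:simp_vols} to translate back to $K'$. You are in fact more careful than the paper's own proof in two respects---you explicitly check the irreducibility hypothesis of Conjecture~\ref{thm:broad_conj} and you flag that the conclusion is a priori only along odd $n$---but these are refinements of the same proof, not a different route.
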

\begin{proof}
Given a knot $K' \subset S^3$, let $L = K' \# K$ with $K$ defined as above. Then, assuming Conjecture \ref{thm:broad_conj},
$$
\lim_{n \to \infty} \frac{2\pi}{n} \log | J_n(K';e^\frac{2\pi i}{n})| = \lim_{n \to \infty} \frac{2\pi}{n} \log | J^T_n(L;e^\frac{2\pi i}{n})| = \rm{Vol}((T^2 \times I) \setminus L) = \rm{Vol}(S^3 \setminus K').
$$
\end{proof}

In this sense, Conjecture \ref{thm:broad_conj} generalizes Conjecture \ref{thm:vol_conj}. It is interesting to note that Conjecture \ref{thm:vol_conj} does not seem to imply Conjecture \ref{thm:broad_conj}.

As we noted earlier, just as the original Volume Conjecture \ref{thm:vol_conj} fails for split links \cite{mm01}, Conjecture \ref{thm:broad_conj} fails for links in $T^2 \times I$ which have one or more nullhomotopic split components. By a \emph{nullhomotopic split component} of a link $L \subset T^2 \times I$, we mean a sublink $L' \subset L$ such that $L'$ is contained in an embedded $2$-sphere in $(T^2 \times I) \setminus L$, and no proper sublink of $L'$ is contained in such a sphere. This is implied by the following:

\begin{prop}
\label{thm:nullh}
If $L \subset T^2 \times I$ is a link with a nullhomotopic split component, $J^T_n(L; e^{2\pi i/n}) = 0$ for all $n$. In particular, if $L$ is nullhomotopic, $J^T_n(L; e^{2\pi i/n}) = 0$ for all $n$.
\end{prop}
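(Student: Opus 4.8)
The plan is to peel off the nullhomotopic split component, recognize that its contribution to the state sum is an ordinary colored Jones polynomial of a link in $S^3$, and then kill that contribution using the identity $[n]_q=0$ at $q=e^{2\pi i/n}$ recorded in (\ref{eq:zero_n}).

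First I would set up the splitting. Let $L'$ be the nullhomotopic split component, so $L'$ lies in a ball $B\subset T^2\times I$ with $\partial B$ disjoint from $L$, and put $L_0=L\setminus L'$. After an isotopy $L$ has a torus diagram $D=D_0\sqcup D'$ that is a disjoint union, where $D_0$ is a diagram of $L_0$ and $D'$ is a diagram of $L'$ contained in an embedded disk in $T^2$. Since the pseudo-operator state sum of equations (\ref{eq:state_sum_1})--(\ref{eq:state_sum_2}), and hence $J^T_n$, is multiplicative under disjoint unions of diagrams while the writhe is additive, $J^T_n(L;q)=J^T_n(L_0;q)\cdot J^T_n(L';q)$, so it suffices to show $J^T_n(L';e^{2\pi i/n})=0$.

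Next I would compute $J^T_n(L')$. Choosing the covering map $\pi$ so that its cut avoids the disk containing $D'$, the virtual tangle diagram constructed in the proof of Lemma~\ref{thm:basis_invar} has no strand crossing the cut: it is simply $D'$ read as a closed $(0,0)$-tangle. Hence, exactly as in that proof, $\Phi^{\mathscr{A}_q,R,\mu}(D')=\mathrm{Tr}(\phi)$ where $\phi:\C\to\C$ is the operator invariant $\mathcal{F}_{L'(n,\dots,n)}$ of $L'$ regarded as a $V^n$-colored link in $S^3$; as $\phi$ is just multiplication by that scalar, $J^T_n(L';q)$ equals $\mathcal{F}_{L'(n,\dots,n)}$ up to a power of $q^{1/4}$ (the same mechanism as in Theorem~\ref{thm:relationship}, with the $(1,1)$-tangle replaced by a $(0,0)$-tangle). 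Presenting $L'$ as the closure of a $(1,1)$-tangle colored by $V^n$ and applying \cite[Lem.~3.9]{km91} --- the resulting $\mathscr{A}_q$-linear endomorphism of $V^n$ is a scalar by Schur's lemma, and closing it up multiplies by $\mathrm{qdim}(V^n)=[n]_q$ --- gives $\mathcal{F}_{L'(n,\dots,n)}=[n]_q\cdot J_n(L';q)$ with $J_n(L';q)$ a Laurent polynomial in $q^{1/4}$. By (\ref{eq:zero_n}) we have $[n]_q=0$ at $q=e^{2\pi i/n}$, while $J_n(L';q)$ is finite there, so $J^T_n(L';e^{2\pi i/n})=0$; combining with the splitting formula proves the first statement, and the second is the case $L'=L$, $L_0=\varnothing$, since a nullhomotopic link is isotopic to one contained in a ball.

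The one step requiring care is the appeal to \cite[Lem.~3.9]{km91}, which here is used at the root of unity $q=e^{2\pi i/n}=e^{2\pi i/r}$ with $r=n$: there $V^n=V^r$ is the top allowed module and has vanishing quantum dimension, so one should confirm that $V^r$ is still irreducible (so Schur's lemma applies) and that the $R$-matrix entries (\ref{eq:r})--(\ref{eq:r_inv}) entering $J_n(L';q)$ stay finite; both are standard facts about $\mathscr{A}_q$ (cf.\ \cite{km91, gl11}), and they are the only places the argument uses more than formal manipulation. If instead one reads ``nullhomotopic split component'' narrowly as a split unknot component, this step is immediate, since then $\mathcal{F}_{L'(n,\dots,n)}=\mathrm{qdim}(V^n)=[n]_q$ directly.
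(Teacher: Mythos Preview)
Your proof is correct and follows essentially the same route as the paper: split off the nullhomotopic component via multiplicativity of the state sum, identify its toroidal invariant with $[n]\cdot J_n(L';q)$ by reading the diagram inside a disk as an ordinary $S^3$ link diagram, and invoke $[n]=0$ at $q=e^{2\pi i/n}$. The paper condenses your middle step to the phrase ``a direct computation shows $J^T_n(L_1;q)=[n]J_n(L_1;q)$''; your explicit appeal to the $(1,1)$-tangle presentation and \cite[Lem.~3.9]{km91}, together with your caveat about irreducibility of $V^r$ and finiteness of the $R$-matrix entries at $r=n$, spells out exactly what that computation entails and is a useful addition rather than a deviation.
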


\begin{proof}
Let $L \subset T^2 \times I$ be a link with nullhomotopic split component $L_1$, and let $L_2 = L \setminus L_1$. Then $J^T_n(L;q) = J^T_n(L_1;q) \cdot J^T_n(L_2;q)$, so it suffices to show $J_n^T(L_1, e^{2\pi i/n}) = 0$.

Since $L_1$ is nullhomotopic, it has a torus diagram $D$ which lifts to a diagram $\tilde{D} \subset \R^2$ such that $\tilde{D} \cap I^2$ is a diagram for $L_1$ as a link in $S^3$. See Figure \ref{fig:figure_eight}, where $L_1$ is the figure eight knot. A direct computation shows $J^T_n(L_1;q) = [n] J_n(L_1;q)$, and $[n] = 0$ when $q = e^{2\pi i/n}$.
\end{proof}

\begin{figure}[H]
\centering
\includegraphics[height=4cm]{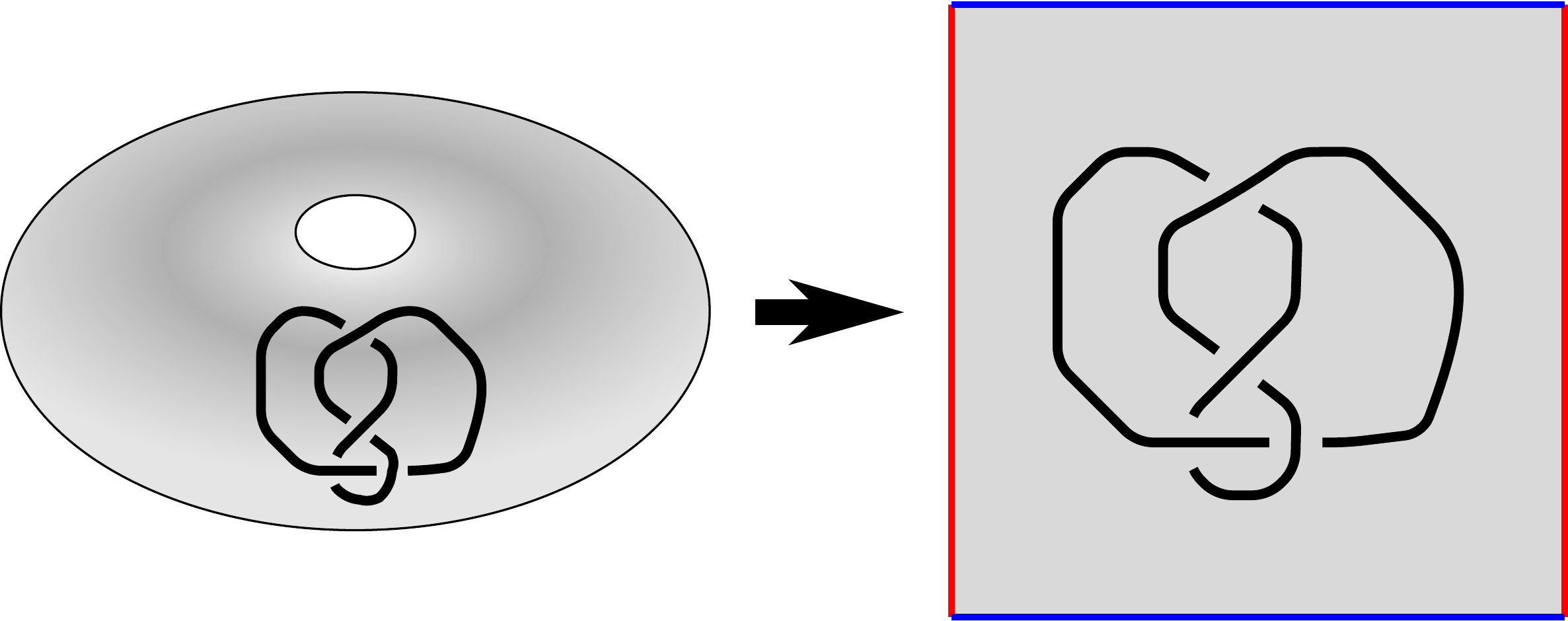}
\caption{A nullhomotopic inclusion of the figure eight knot in $T^2 \times I$}
\label{fig:figure_eight}
\end{figure}

\begin{rmk} In \cite{v09}, Van der Veen noted that the original Volume Conjecture \ref{thm:vol_conj} can be changed to account for split links by choosing a different normalization of the colored Jones polynomial. Essentially, each split component adds a factor of $[n]$ to $J_n$---if a link $L$ has $s$ split components, we can divide by $[n]^s$ to obtain a non-zero value at the root $q = e^{2\pi i/n}$.

Analogously, if $L \subset T^2 \times I$ has $s$ \textit{nullhomotopic} split components, we can ask whether
$$
\lim_{n \to \infty} \frac{2\pi}{n} \log | \frac{1}{[n]^s} J^T_n(L;e^\frac{2\pi i}{n})| = \rm{Vol}((T^2 \times I) \setminus L).
$$
Replacing equation (\ref{eq:broad_conj}) in Conjecture \ref{thm:broad_conj} with the above equation, we can remove the hypothesis that $(T^2 \times I) \setminus L$ be irreducible.
\end{rmk}

\subsection{Higher-Genus Surfaces}

Taking a different direction, one could attempt to generalize Conjecture \ref{thm:conj} to links in thickened surfaces of genus greater than one. As we noted earlier, while there is no obvious way to define pseudo-operator invariants for links in these surfaces, Corollary \ref{thm:skein_cjp} lets us define the $SU(2)$ toroidal colored Jones polynomial skein-theoretically in any orientable manifold.

As defined, volume conjecture behavior is unlikely to occur in thickened surfaces of genus greater than one. To see why, let $\Sigma \times I$ be such a thickened surface containing a link $L$. Since $\Sigma \times I$ has boundary components which are not spheres or tori, there is not a unique way to assign a complete hyperbolic structure to the complement of $L$. One way to resolve this ambiguity, as in \cite{a19}, is to choose the hyperbolic structure on $(\Sigma \times I) \setminus L$ which has totally geodesic boundary. If such a structure exists, $(\Sigma \times I) \setminus L$ is called \emph{tg-hyperbolic} and it has a finite \emph{tg-hyperbolic volume}.

Theorem \ref{thm:gl} says that, in the case of a link $L$ in the thickened torus with crossing number $c$,
\begin{equation}
\label{eq:bound2}
\lim_{n \to \infty} \frac{2\pi}{n} \log | J^T_n(L;e^\frac{2\pi i}{n})| \leq c \cdot v_\text{oct}.
\end{equation}
A similar bound exists for links in $S^3$---see \cite[Thm.~1.13]{gl11}---and we conjecture that (\ref{eq:bound2}) holds for $J^T_n$ for links in any genus thickened surface. In surfaces with genus greater than one, however, there are many links whose tg-hyperbolic volume exceeds this bound. Consider, for example, the virtual link 3.1 of \cite{g04} viewed as a link in the thickened orientable surface of genus two: its crossing number is three and its tg-hyperbolic volume is $\approx 18.75 > 3v_\text{oct}$ \cite{a19}. Thus, volume convergence as defined above is not possible if (\ref{eq:bound2}) holds for genus two surfaces and we choose the tg-hyperbolic structure on the complement of $L$.

This does not mean \emph{no} volume conjecture can exist for links in higher genus surfaces---just that any such conjecture would need to look different from Conjecture \ref{thm:vol_conj} and Conjecture \ref{thm:conj}. It may be interesting to examine what kind of relationship can exist between the $SU(2)$ toroidal colored Jones polynomial of a link in a higher-genus surface and its tg-hyperbolic volume.

\section{The Toroidal Colored Jones Polynomial as an Invariant of Biperiodic and Virtual Links}

Beyond its volume conjecture behavior, the toroidal colored Jones polynomial may be useful as an invariant of biperiodic and virtual links. A \emph{biperiodic link} is a properly embedded $1$-manifold $\tilde{L} \subset \R^2 \times I$, such that $\tilde{L}$ is invariant under translations by a $2$-dimensional lattice $\Lambda$ and $L = \tilde{L} / \Lambda$ is a link in $T^2 \times I$---see \cite{ckp19}. We call $\Lambda$ \emph{maximal} if it is not properly contained in another invariant lattice for $\tilde{L}$, in which case the resulting link $L \subset T^2$ is a \emph{minimal representative} of $\tilde{L}$. For a given biperiodic link $\tilde{L}$, there are many possible choices of minimal representative. However, if $L_1, L_2 \subset T^2 \times I$ are two minimal representatives of $\tilde{L} \subset \R^2 \times I$ with respective diagrams $D_1, D_2 \subset T^2$, then there exists an orientation-preserving homeomorphism $f$ of $T^2$ such that $f(D_1) = D_2$ (c.f. \cite[Prop.~2.1]{gmo07}). Hence, Proposition \ref{thm:follow_up} gives the following:

\begin{thm}
\label{thm:biperiodic_links}
If $\tilde{L} \subset \R^2 \times I$ is a biperiodic link and $L \subset T^2 \times I$ is a minimal representative of $\tilde{L}$, define $J_n^T(\tilde{L}) = J_n^T(L)$. Then $J^T_n$ is an invariant of biperiodic links in $\R^2 \times I$.
\end{thm}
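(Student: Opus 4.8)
The plan is to verify that the rule $\tilde L \mapsto J^T_n(L)$ is well defined — independent of the choice of minimal representative $L$ — and then that it is unchanged when $\tilde L$ is replaced by a biperiodically isotopic link; the first point carries essentially all of the content. For independence of the minimal representative, I would take two minimal representatives $L_1, L_2 \subset T^2 \times I$ of $\tilde L$, with torus diagrams $D_1, D_2 \subset T^2$. As recalled just before the statement, the maximal invariant lattice of $\tilde L$ is canonical up to the natural $GL(2,\Z)$-ambiguity, and concretely (c.f.\ \cite[Prop.~2.1]{gmo07}) there is an orientation-preserving homeomorphism $f$ of $T^2$ with $f(D_1) = D_2$. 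After a $C^0$-small perturbation of $f$ — equivalently, a small isotopy of $D_2$ in $T^2 \times I$ — I may assume $f(D_1)$ is again a torus diagram in the sense of Section~3, with isolated critical points, without changing its isotopy class; this is a routine transversality adjustment. Proposition~\ref{thm:follow_up} then applies directly and gives $J^T_n(L_1;q) = J^T_n(L_2;q)$ for all $n$, so $J^T_n(\tilde L) := J^T_n(L)$ is unambiguous.

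For invariance under biperiodic isotopy, I would observe that such an isotopy of $\tilde L$ descends to an ambient isotopy of $T^2 \times I$ carrying a minimal representative of $\tilde L$ to one of a biperiodically isotopic link $\tilde L'$, up to the orientation-preserving change of coordinates on the ambient torus handled exactly as above. Since $J^T_n$ is already an invariant of oriented, unframed links in $T^2 \times I$ — this is Theorem~\ref{thm:invariance} together with the writhe normalization of Definition~\ref{def:simple_cjp} — combining this with the first step yields $J^T_n(\tilde L) = J^T_n(\tilde L')$.

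The step I expect to be the only genuine obstacle is the input quoted from \cite{gmo07}: that any two minimal representatives of a biperiodic link are related by an \emph{orientation-preserving} self-homeomorphism of $T^2$. This packages two facts — that the maximal period lattice is essentially canonical (the analogue for biperiodic links of the statement that a period lattice is defined up to $SL(2,\Z)$), and that the residual ambiguity can always be realized by an orientation-preserving homeomorphism, which matters because an orientation-reversing one would lie outside the hypotheses of Proposition~\ref{thm:follow_up}. The orientation-preserving refinement is forced by the fact that $\R^2$ and the quotient tori carry compatible orientations respected by the deck-equivariant identifications of covering spaces; the lattice bookkeeping behind reducing an arbitrary equivariant isotopy to the maximal lattice is the only other fiddly point, and it is routine. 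Once this input is in hand, Proposition~\ref{thm:follow_up} and the isotopy invariance of $J^T_n$ in $T^2 \times I$ supply everything else.
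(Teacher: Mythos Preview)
Your proposal is correct and follows essentially the same approach as the paper: the paper's entire argument is the sentence preceding the theorem, which cites \cite[Prop.~2.1]{gmo07} for the fact that two minimal representatives differ by an orientation-preserving homeomorphism of $T^2$ and then invokes Proposition~\ref{thm:follow_up}. Your additional remarks about perturbing to a genuine torus diagram and about invariance under biperiodic isotopy are reasonable elaborations that the paper leaves implicit.
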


Another non-classical type of link, \emph{virtual links}, are an area of extensive study---see \cite{k99} for an introduction. By \cite{cks02, k03}, any virtual link $L'$ is represented uniquely by a link $L$ in a minimal-genus thickened surface, up to an orientation-preserving homeomorphism of the surface. The $U_q(sl(2, \C))$ toroidal colored Jones polynomial is defined only for links in $T^2 \times I$, but the $SU(2)$ toroidal colored Jones polynomial can be defined skein-theoretically for links in any thickened surface. Similar to above, we have:

\begin{thm}
\label{thm:virtual_links}
If $L'$ is a virtual link and $L \subset \Sigma \times I$ is a minimal representative of $L'$, define $J_n^T(L') = J_n^T(L)$. Then $J^T_n$ is an invariant of virtual links.
\end{thm}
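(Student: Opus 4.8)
The plan is to combine the skein-theoretic description of $J^T_n$ valid in an arbitrary orientable thickened surface (Corollary \ref{thm:skein_cjp} and the discussion following it) with Kuperberg's realization theorem \cite{cks02, k03}: a virtual link $L'$ is represented by a link $L \subset \Sigma \times I$ in a thickened closed orientable surface of minimal genus, and this representative is unique up to isotopy of $L$ in $\Sigma \times I$ together with an orientation-preserving homeomorphism of $\Sigma$. Setting $J^T_n(L') := J^T_n(L)$, there are thus only two ambiguities to rule out, and I would handle them in turn.

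Step 1: isotopy invariance in $\Sigma \times I$. This is precisely the assertion that the $SU(2)$ toroidal colored Jones polynomial, defined skein-theoretically, is a well-defined link invariant in any orientable thickened surface. Recall that $J^T_n(L;q)$ is computed from a diagram $D \subset \Sigma$ by cabling each component by the Chebyshev polynomial $S_{n-1}(z)$, applying the generalized Kauffman bracket $\langle * \rangle_\tau$ of Definition \ref{def:kauffman} at $z = 2$, and multiplying by $((-1)^{n-1}A^{n^2-1})^{-w(D)}$. The bracket $\langle * \rangle_\tau$ is well-defined on diagrams in $\Sigma$: relations (b)(i) and (c) are the usual Kauffman skein relations, so any diagram reduces to a $\Z[A^{\pm1}]$-linear combination of disjoint unions of essential and contractible simple closed curves, and (b)(ii) then assigns each essential component the factor $z$, independently of the order of removal. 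Since the writhe $w(D)$ is unchanged by Reidemeister II and III moves, and the appearance or removal of a curl is exactly compensated by the normalization, $J^T_n$ is unchanged by all diagrammatic moves and hence is an isotopy invariant of $L \subset \Sigma \times I$.

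Step 2: invariance under the homeomorphism ambiguity. Suppose $L_1, L_2 \subset \Sigma \times I$ are two minimal representatives of $L'$, so after an isotopy $L_2 = (f \times \mathrm{id})(L_1)$ for an orientation-preserving homeomorphism $f$ of $\Sigma$. Choosing a diagram $D_1 \subset \Sigma$ of $L_1$, the image $f(D_1)$ is a diagram of $L_2$, and every ingredient of the skein-theoretic formula is carried to itself by $f$: the empty diagram maps to the empty diagram; a simple closed curve is contractible in $\Sigma$ if and only if its image is; $f$ sends local $A$- and $B$-smoothings to local $A$- and $B$-smoothings; and, since $f$ is orientation-preserving, it preserves the sign of every crossing, so $w(f(D_1)) = w(D_1)$. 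Hence, after cabling, $\langle f(D_1) \rangle_\tau = \langle D_1 \rangle_\tau$ and $J^T_n(L_2) = J^T_n(L_1)$. (This is the natural generalization of Proposition \ref{thm:follow_up} from $T^2$ to $\Sigma$.) It follows that $J^T_n(L') = J^T_n(L)$ is independent of the chosen minimal representative, so $J^T_n$ is an invariant of virtual links.

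The argument is bookkeeping once Kuperberg's theorem is in hand; the one point to verify with care is that the equivalence relation on surface representatives in \cite{cks02, k03} — orientation-preserving homeomorphism and isotopy, with minimality eliminating stabilization and destabilization — is exactly the relation under which the skein-theoretic $J^T_n$ is constant. In particular, the orientation-preserving hypothesis is essential: an orientation-reversing homeomorphism of a higher-genus $\Sigma$ reverses all crossing signs and so changes the writhe normalization, and $J^T_n$ need not be invariant under it. I expect this matching of equivalence relations, rather than any computation, to be the (minor) crux.
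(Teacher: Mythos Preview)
Your proposal is correct and follows essentially the same approach as the paper: invoke the Carter--Kamada--Saito/Kuperberg uniqueness of the minimal-genus representative up to orientation-preserving surface homeomorphism, and then check that the skein-theoretic $J^T_n$ is invariant under such homeomorphisms. The paper isolates your Step~2 as a separate lemma (Lemma~\ref{thm:follow_up_again}) and phrases the check as an induction on crossing number for $\langle * \rangle_\tau$, but the content---$f$ preserves writhe, contractibility of simple closed curves, and commutes with crossing resolutions---is exactly what you wrote.
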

Here $\Sigma$ is a closed, orientable surface and $J^T_n$ is the $SU(2)$ toroidal colored Jones polynomial, defined skein-theoretically as in Corollary \ref{thm:skein_cjp}. To prove Theorem \ref{thm:virtual_links}, we need only show the skein-theoretic $J^T_n$ is preserved by orientation-preserving homeomorphisms of surfaces. This is done in the lemma below.

\begin{lemma}
\label{thm:follow_up_again}
Let $L, L'$ be links in $\Sigma \times I$ with respective diagrams $D, D' \subset \Sigma$, $\Sigma$ a closed, orientable surface.  If $f$ is an orientation-preserving homeomorphism of $\Sigma$ satisfying $f(D) = D'$, then $J^T_n(L;q) = J^T_n(L';q)$ for all $n \in \N$. Here $J^T_n$ is the $SU(2)$ toroidal Jones polynomial, defined skein-theoretically.
\end{lemma}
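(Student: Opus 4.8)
The plan is to reduce the lemma to the functoriality of the Kauffman bracket skein module under orientation-preserving homeomorphisms, together with the invariance of the writhe. Following Corollary~\ref{thm:skein_cjp}, extended to $\Sigma \times I$ as described above, I would first write
$$
J^T_n(L;q) = \Big[\big((-1)^{n-1}A^{n^2-1}\big)^{-w(D)}\, p_\Sigma\big(\langle S_{n-1}(z),\dots,S_{n-1}(z)\rangle_{\Sigma,D}\big)\Big]\Big|_{A^4=q},
$$
where $\langle\,\cdots\,\rangle_{\Sigma,D}:\mathscr{S}(\mathcal{A})^{\otimes k}\to\mathscr{S}(\Sigma)$ is the Kauffman multi-bracket and $p_\Sigma:\mathscr{S}(\Sigma)\to\Z[A^{\pm1}]$ is the $\Z[A^{\pm1}]$-linear map (restricting to $p_2$ when $\Sigma=T^2$) sending a basis multicurve with $m$ components to $2^m$ and the empty diagram to $1$. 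That $p_\Sigma$ is well-defined rests on Przytycki's description of $\mathscr{S}(\Sigma)$ as the free $\Z[A^{\pm1}]$-module on isotopy classes of multicurves with no contractible components, and one checks exactly as for the torus that $p_\Sigma(\langle L\rangle_\Sigma)=\langle L\rangle_\tau|_{z=2}$.

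Next I would observe that an orientation-preserving homeomorphism $f$ of $\Sigma$ (which we may take smooth) induces, via $f\times\mathrm{id}_I$, a $\Z[A^{\pm1}]$-linear isomorphism $f_*:\mathscr{S}(\Sigma)\to\mathscr{S}(\Sigma)$: it carries framed links to framed links and respects the two Kauffman skein relations, the point of orientation-preservation being that the $A$- and $A^{-1}$-smoothings at a crossing are not interchanged. Because $f$ preserves the product structure near $\Sigma\times\{0\}$, it sends the $i_j$-fold parallel push-off of the $j$th component of $D$ to the $i_j$-fold push-off of the corresponding component of $D'=f(D)$; hence $f_*$ intertwines the multi-brackets, $f_*\big(\langle S_{n-1}(z),\dots\rangle_{\Sigma,D}\big)=\langle S_{n-1}(z),\dots\rangle_{\Sigma,D'}$. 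Moreover $f_*$ permutes the multicurve basis of $\mathscr{S}(\Sigma)$ while preserving the number of components of each basis element, so $p_\Sigma\circ f_*=p_\Sigma$. Combining this with $w(D)=w(D')$ — true because an orientation-preserving homeomorphism preserves the sign of each crossing — the displayed formula yields $J^T_n(L;q)=J^T_n(L';q)$.

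The main point requiring care is the compatibility of $f$ with the blackboard framing and the cabling: one must check that the $n_j$-fold parallel push-off ``using the framing parallel to $\Sigma\times\{0\}$'' is a notion preserved by $f_*$, and that $f_*$ fixes the grading of $\mathscr{S}(\Sigma)$ by number of components. Both follow once one notes that $f\times\mathrm{id}_I$ is isotopic to a homeomorphism of $\Sigma\times I$ respecting the product structure on a collar of $\Sigma\times\{0\}$; I would spell this out and leave the routine verifications that $f_*$ respects the skein relations and that $p_\Sigma$ is well-defined. An alternative, essentially equivalent route avoids $\mathscr{S}(\Sigma)$ altogether: resolving the cabled diagram of $L$ by relations (a)--(c) of Definition~\ref{def:kauffman} produces a $\Z[A^{\pm1},z]$-linear combination of disjoint unions of simple closed curves, and applying $f$ to each step of this resolution produces exactly the corresponding resolution of the cabled diagram of $L'$ with the same coefficients, since $f$ fixes the empty diagram, maps contractible curves to contractible curves and essential curves to essential curves, and preserves the local data used in relation (c); together with $w(D)=w(D')$ this again gives the claim.
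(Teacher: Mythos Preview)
Your proposal is correct, and in fact your ``alternative, essentially equivalent route'' at the end is precisely the paper's own argument: the paper reduces to invariance of the bracket $\langle *\rangle_\tau$ under $f$ and proves this by induction on crossing number, using that $f$ commutes with the two smoothings in relation~(c) and preserves contractibility of simple closed curves in the base case.

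Your primary route, via the induced isomorphism $f_*:\mathscr{S}(\Sigma)\to\mathscr{S}(\Sigma)$ together with $p_\Sigma\circ f_*=p_\Sigma$, is a more structural repackaging of the same idea. It has the virtue of isolating exactly where each hypothesis is used (orientation-preservation for the skein relations and writhe, homeomorphism for preserving the component count of basis multicurves), and it makes transparent why the argument works uniformly in $n$ without separately invoking the cabling formula. The cost is that you must appeal to Przytycki's basis theorem for $\mathscr{S}(\Sigma)$ to define $p_\Sigma$, whereas the paper's induction needs only the defining relations of $\langle *\rangle_\tau$. Either approach is fine; the paper's is slightly more self-contained, yours slightly more conceptual.
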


\begin{proof}
Because $f$ preserves orientation, $D$ and $D'$ have the same writhe. Thus it suffices to prove the result for $\hat{J}^T_{{\bf n}, q}$, which follows  from the case of $\hat{J}^T_{{\bf 2}, q}$. Equivalently, we show the bracket $\langle * \rangle_\tau$ defined in Section 5 is invariant under orientation-preserving homeomorphisms of $\Sigma$.

The claim follows by induction on crossing number, noting $f$ induces a bijection on the crossings of $D$ and $D'$. If $D$ has no crossings, $\langle D \rangle_{\tau}$ is determined by whether or not $D$ is contractible, which is preserved by $f$. For an arbitrary diagram $D$, we can ``resolve'' a crossing using the relation (c) of Definition \ref{def:kauffman}. Since $f$ commutes with both types of crossing resolution in relation (c), the claim follows inductively.
\end{proof}

As Remark \ref{rmk:virtual} discusses, $J^T_n$ is distinct from existing quantum invariants of virtual links. To our knowledge, it is the first invariant of virtual links to exhibit volume conjecture behavior for genus one virtual links, i.e. links in the thickened torus. Continuing our discussion from Section 7.2, it is interesting to ask what kind of volume conjecture behavior emerges in higher-genus virtual links.

\appendix

\section{The Toroidal Colored Jones Polynomial and Rotation Number}

The following generalization of property (b) of Lemma \ref{thm:pre_kauffman} is not hard to prove, using Proposition \ref{thm:why_two} and a direct computation.
\begin{prop}
\label{thm:torus_knots}
Let $K \subset T^2 \times I$ be a knot projecting to a simple closed curve in $T^2$.
\begin{enumerate}[label=(\alph*)]
\item If $K$ is nullhomotopic, the $SU(2)$ toroidal colored Jones polynomial $J_n^T$ satisfies
$$
J_n^T(K;q) = -[n].
$$
The $U_q(sl(2,\C))$ toroidal colored Jones polynomial $J_n^T$ satisfies
$$
J_n^T(K;q) = [n].
$$
\item If $K$ is not nullhomotopic, the $SU(2)$ and $U_q(sl(2,\C))$ toroidal colored Jones polynomials both satisfy
$$
J_n^T(K;q) = n.
$$
\end{enumerate}
\end{prop}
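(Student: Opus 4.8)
The plan is to handle the essential and nullhomotopic cases separately. In both cases the diagram $D$ of $K$ is a single simple closed curve in $T^2$ with no crossings, so $w(D)=0$ and the writhe corrections in Definition~\ref{def:simple_cjp} (and in its $SU(2)$ analogue) are trivial; thus $J^T_n(K;q)=\hat{J}^T_{(n,\dots,n),q}(D)$ in every version of the invariant, and the whole problem reduces to computing $\hat{J}^T_{(n,\dots,n),q}$ on an unknotted circle of the two possible homotopy types.

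For part (b), where $K$ projects to an essential curve, the $U_q(sl(2,\C))$ statement is exactly Proposition~\ref{thm:why_two}: $\hat{J}^T_{(n,\dots,n),q}(D)=\Phi^{\mathscr{A}_q,R,\mu}(K(n))=\dim V^n=n$. For the $SU(2)$ version I would instead invoke Theorem~\ref{thm:skein_op}, which writes $\hat{J}^T_{(n,\dots,n),q}(D)=p_2\big(\langle S_{n-1}(z)\rangle_{T,D}\big)\big|_{A^4=q}$; since every parallel cable $D^{j}$ is a union of $j$ essential curves of the same slope, $p_2(\langle z^{j}\rangle_{T,D})=2^{j}$, so the bracket evaluates to $S_{n-1}(2)$, which equals $n$ by the Chebyshev recursion (\ref{eq:chebyshev}). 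Either way $J^T_n(K;q)=n$.

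For part (a), where $K$ projects to a nullhomotopic curve, that curve bounds an embedded disk in $T^2$, so $K$ can be isotoped to lie inside a $3$-ball $B\subset T^2\times I$. The key observation is that for a link supported in such a ball the two constructions of the invariant collapse to their classical counterparts in $S^3$: in the pseudo-operator picture one uses the proof of Lemma~\ref{thm:basis_invar} and pushes $K$ off the ``seam'' of $T^2$ so that it is disjoint from all the connecting arcs $c_q$, whence $\Phi^{\mathscr{A}_q,R,\mu}(K(n))$ is literally the operator invariant $\mathcal{F}^{\mathscr{A}_q}_{K(n)}$ of Theorem~\ref{thm:fund_tangles} for the $V^n$-colored unknot, i.e.\ the quantum dimension of $V^n$, which is $[n]$ for the chosen good unit $\mu$; since $w(D)=0$ this gives the $U_q(sl(2,\C))$ value $J^T_n(K;q)=[n]$. (Equivalently, one may cite the identity $J^T_n(L;q)=[n]\,J_n(L;q)$ used in the proof of Proposition~\ref{thm:nullh} together with $J_n(\text{unknot};q)=1$ from (\ref{eq:tangle_jones}).) For the $SU(2)$ value, Corollary~\ref{thm:skein_cjp} gives $J^T_n(K;q)=p_2\big(\langle S_{n-1}(z)\rangle_{T,D}\big)\big|_{A^4=q}$; now $D^{j}$ is a $j$-component unlink of contractible circles, so relation (b)(i) of Definition~\ref{def:kauffman} (equivalently Theorem~\ref{thm:kauffman}) gives $p_2(\langle z^{j}\rangle_{T,D})=(-A^2-A^{-2})^{j}$, hence the bracket equals $S_{n-1}(-A^2-A^{-2})$, which one evaluates at $A^4=q$ via the closed form (\ref{eq:two}) (or a one-line induction on (\ref{eq:chebyshev})) to obtain the stated value.

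Finally, the only step that genuinely needs care is the claim, in part (a), that pushing $K$ into a ball away from the seam really does reduce $\Phi$ (and the bracket $\langle\ast\rangle_\tau$) to the ordinary $S^3$ invariant with no extra sign or scalar: this follows directly from the two constructions, but it is the point one should verify carefully, since a spurious curl or a mismatched local orientation would change the answer. Everything else is routine manipulation of quantum integers and Chebyshev polynomials.
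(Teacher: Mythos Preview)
Your proposal follows exactly the route the paper sketches (``using Proposition~\ref{thm:why_two} and a direct computation''): Proposition~\ref{thm:why_two} handles the essential case for the pseudo-operator invariant, the Chebyshev identity $S_{n-1}(2)=n$ handles it skein-theoretically, and pushing the nullhomotopic circle into a ball reduces part~(a) to the quantum dimension of $V^n$, equivalently to $S_{n-1}(-A^2-A^{-2})$.  This is precisely the intended argument.

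One place where you should actually carry out the computation rather than assert it: in the $SU(2)$ nullhomotopic case you claim that $S_{n-1}(-A^2-A^{-2})\big|_{A^4=q}$ ``evaluates to the stated value.''  If you set $x=-A^2$, so that $x+x^{-1}=-A^2-A^{-2}$, the identity $S_{n-1}(x+x^{-1})=(x^{n}-x^{-n})/(x-x^{-1})$ gives
\[
S_{n-1}(-A^2-A^{-2})\big|_{A^4=q}=(-1)^{n-1}\,\frac{q^{n/2}-q^{-n/2}}{q^{1/2}-q^{-1/2}}=(-1)^{n-1}[n],
\]
which agrees with the displayed $-[n]$ only when $n$ is even (it does reproduce Lemma~\ref{thm:pre_kauffman}(b)(i) at $n=2$).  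This is the standard $SU(2)$ quantum dimension, so the discrepancy is almost certainly a minor inaccuracy in the statement rather than in your method; still, it is exactly the kind of scalar you flagged in your final paragraph as needing verification, so you should not leave it implicit.
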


Proposition \ref{thm:torus_knots} says, in a sense, that contractible, simple closed curves in $T^2$ are ``quantized'' by the toroidal colored Jones polynomial while essential, simple closed curves are not. We would like to motivate geometrically why this striking phenomenon occurs.

To accomplish this, we recall Lin and Wang's definition of the Jones polynomial \cite{lw01}, adapted from work in \cite{t88-2}. As we will see, their construction extends in a natural way to define $J^T_2$ and, by cabling, $J^T_n$ for all $n > 2$. Its use of rotation number provides insight into Proposition \ref{thm:torus_knots}, at least for $n = 2$.

We briefly recall Lin and Wang's definition. First, fix the preferred basis $\{e_0, e_1\}$ of $V^2$ we used in Section $7$. In this basis the $R$-matrix coefficients are:
\begin{align*}
R^{0,0}_{0,0} &= R^{1,1}_{1,1} = q^{1/4}, &R^{1,0}_{0,1} &= R^{0,1}_{1,0} = q^{-1/4}, &R^{0,1}_{0,1} = q^{1/4} - q^{-3/4} \\
(R^{-1})^{0,0}_{0,0} &= (R^{-1})^{1,1}_{1,1} = q^{-1/4}, &(R^{-1})^{1,0}_{0,1} &= (R^{-1})^{0,1}_{1,0} = q^{1/4}, &(R^{-1})^{1,0}_{1,0} = q^{-1/4} - q^{3/4}
\end{align*}
and all other entries of $R$ and $R^{-1}$ are zero.

Given a diagram $D$ of an oriented link $L \subset S^3$, let $P_c$ be the set a crossing points of $D$. In this context, a state $s$ of $D$ is an assignment of $0$ or $1$ to each component of $D \setminus P_c$. (States are defined differently here than in Section 3---we ignore local extrema and do not make use of $(V^2)^*$.) If a state $s$ labels a neighborhood of a positive crossing $p$ with $i,j,k,l \in \{0,1\}$ as in Figure \ref{fig:crit_point}, the weight of the crossing is $\omega_p(s) = R^{ij}_{kl}$. If $s$ labels a neighborhood of a negative crossing the same way, the weight of $p$ is $\omega_p(s) = (R^{-1})^{ij}_{kl}$.

Similar to (\ref{eq:state_sum_1}), we define the total weight of a state $s$ to be
$$
\omega^c(s) = \prod_{p \in P_c} \omega_p(s).
$$
A state $s$ is called \emph{admissible} if $\omega^c(s) \neq 0$. Examining the coefficients of $R$ and $R^{-1}$, we see $s$ is admissible if and only if each crossing of $D$ has one of the patterns of labels shown in Figure \ref{fig:adm_crossing}, where dashed and solid lines indicate $0$- and $1$-labels respectively. If either of the two rightmost cases in Figure \ref{fig:adm_crossing} occurs in $D$, we resolve the given crossing into two vertical lines. This decomposes $D$ into a set of closed curves, each labelled entirely by $0$ or entirely by $1$ in $s$. Define $\text{rot}_i(D, s)$ to be the sum of the rotation numbers (the degree of the Gauss map) of all $i$-labelled curves of $D$ after these resolutions take place. Then
\begin{prop}[\cite{lw01}]
\label{thm:lin_wang}
$$
J_2(L; q) = \frac{1}{[n]} (q^{3/4})^{-w(D)} \sum_{s \in \text{Adm}_c(D)} q^{(\text{rot}_1(D,s) - \text{rot}_0(D,s))/2} \cdot \omega^c(s),
$$
where $\text{Adm}_c(D)$ is the set of admissible states of $D$.
\end{prop}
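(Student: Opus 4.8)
The plan is to derive the formula directly from the tangle--operator definition of $J_2$ in equation (\ref{eq:tangle_jones}), specialized to the color $n = 2$, where $(n^2 - 1)/4 = 3/4$. Here $\mathcal{F}^{\mathscr{A}_q}_{L(2,\dots,2)}$ is computed, exactly as in Section~6 but for $L$ viewed as a $(0,0)$-tangle in $S^3$, as a state sum over \emph{all} critical points of a diagram $D$ of $L$ --- crossings together with local extrema --- in which arcs oriented upward are labelled by the dual basis of $V^2$ and caps/cups contribute the operators $E, E_\mu, N, N_{\mu^{-1}}$ of Theorem~\ref{thm:fund_tangles}. First I would write this state sum out explicitly in the preferred basis $\{e_0, e_1\}$, using the entries of $R$ and $R^{-1}$ recorded above. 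The one nontrivial input about $\mathscr{A}_q$ is that the good unit $\mu$ is diagonal on $V^2$ with entries $q^{\pm 1/2}$, so that the straight cap $E$ and straight cup $N$ contribute only Kronecker deltas, while the ``bent'' cap $E_\mu$ and cup $N_{\mu^{-1}}$ contribute in addition the relevant diagonal entry of $\mu^{\pm 1}$.

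Next I would match the nonzero-weight states of this state sum with Lin and Wang's admissible states. Reading off the explicit $R$- and $R^{-1}$-entries, a crossing carries a nonzero weight precisely when its four surrounding arcs are labelled by one of the patterns of Figure~\ref{fig:adm_crossing}; in the two ``turn-back'' cases (the entries $R^{0,1}_{0,1}$ and $(R^{-1})^{1,0}_{1,0}$) I would replace the crossing by a cap--cup pair, which is exactly the resolution described in the statement and introduces one controlled pair of extra extrema. After these resolutions, every nonzero state of the state sum restricts to a locally constant $\{0,1\}$-labelling of the resulting crossingless diagram, i.e.\ a monochromatic labelling of each of its closed curves; conversely each admissible state of Lin and Wang arises this way, together with the extra data of the extrema along each curve. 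Said differently, the self-duality $V^2 \cong (V^2)^*$ converts the state sum computing $\mathcal{F}^{\mathscr{A}_q}_{L(2,\dots,2)}$ (which distinguishes up- and down-oriented arcs) into the Lin--Wang state sum (which does not), at the cost of collecting powers of $\mu$ at the extrema.

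The heart of the argument is then to evaluate, for a fixed admissible state $s$, the product over all (original and newly created) extrema of their weights, isolating the accumulated power of $\mu$. Along a closed curve $C$ of the resolved diagram labelled by $i \in \{0,1\}$, each extremum of $C$ contributes either $1$ (from $E$ or $N$) or an entry $q^{\pm 1/2}$ of $\mu^{\pm 1}$ (from $E_\mu$ or $N_{\mu^{-1}}$), and I would show that the total $q$-exponent collected along $C$ equals $+\tfrac12 \mathrm{rot}(C)$ when $i = 1$ and $-\tfrac12 \mathrm{rot}(C)$ when $i = 0$. Taking the product over all closed curves of the resolution then produces exactly the factor $q^{(\mathrm{rot}_1(D,s) - \mathrm{rot}_0(D,s))/2}$, the surviving crossing weights reassemble into $\omega^c(s)$, and the remaining scalars are precisely the prefactor $(q^{3/4})^{-w(D)}/[2]$ appearing in (\ref{eq:tangle_jones}). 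Summing over $s$ gives the stated identity.

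The main obstacle is this last topological bookkeeping: keeping track, as one traverses a closed curve, of which of its caps and cups carry a factor $\mu$, $\mu^{-1}$, or nothing --- this is dictated by the local orientations of the two strands at the extremum, as in Figure~\ref{fig:crit_point} --- and matching the resulting signed count of $\mu$-decorated extrema with the turning number of the curve, all while the relevant curves are the ones appearing \emph{after} the crossing resolutions rather than the arcs of the original diagram. I would organize this by first establishing the identity for crossingless curves via a Whitney-type induction on the number of extrema (cancelling a cap and a cup of opposite type changes neither $\mathrm{rot}(C)$ nor the accumulated weight), and then checking that each of the two resolution moves leaves all rotation numbers and the total product of extremum weights unchanged, reducing the general case to the crossingless one. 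This is also the structural reason behind Proposition~\ref{thm:torus_knots}: the $\mu$-factors behave as rotation-number decorations, which is exactly what distinguishes contractible from essential simple closed curves in $T^2$.
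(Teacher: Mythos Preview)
The paper does not give its own proof of this proposition: it is quoted as a result of Lin and Wang \cite{lw01} and stated without argument. What the paper \emph{does} prove is the torus analogue, Theorem~\ref{thm:me_and_lw}, and the argument there is essentially the one you outline --- write the operator invariant as a state sum over crossings and extrema, observe that admissibility at extrema forces the label to be constant across each cap and cup so that $\text{Adm}(D)\leftrightarrow\text{Adm}_c(D)$, and then identify the accumulated product of $\mu$-entries along each resolved closed curve with $q^{\pm\text{rot}/2}$ via the diagonal form $\mu_0^0=q^{-1/2}$, $\mu_1^1=q^{1/2}$.

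Two small points of comparison. First, the paper normalizes by an isotopy (Figure~\ref{fig:cross_twist}) so that every crossing has both strands oriented downward; this guarantees $\omega_p(\sigma)=\omega_p(s)$ at crossings without having to pass through the dual basis, and makes the ``turn-back'' resolution literally a replacement by two vertical strands rather than a cap--cup pair. Your version, replacing by a cap--cup and then tracking the extra extrema, is equivalent but slightly heavier on bookkeeping. Second, for the rotation-number identity the paper simply asserts the ``key observation'' that a $\mu$ sits on each left-oriented cap and a $\mu^{-1}$ on each left-oriented cup, so the exponent of the product is the rotation number (Figure~\ref{fig:mu_rotation}); your Whitney-type induction on the number of extrema is a clean way to make that assertion rigorous and would serve just as well for the $S^3$ statement here.
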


\begin{figure}[H]
\centering
\includegraphics[height=1.5cm]{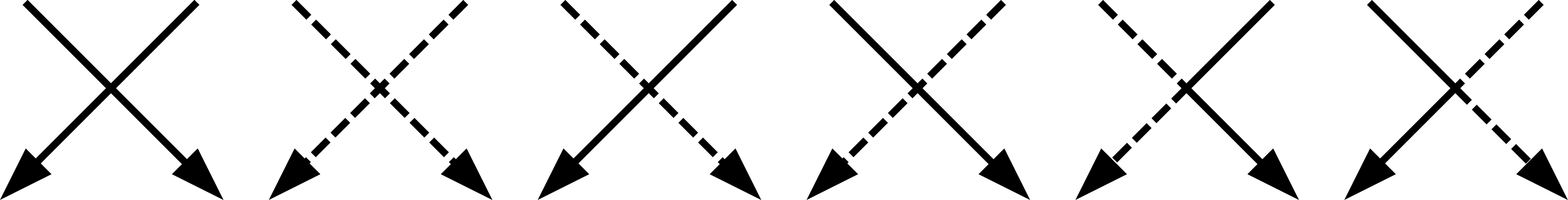}
\caption{Admissible states near crossings}
\label{fig:adm_crossing}
\end{figure}

Removing the factor of $1/[n]$, this definition extends to a torus with no trouble. It agrees with our definition of $J^T_2$.
\begin{thm}
Let $L \subset T^2 \times I$ be an oriented link with diagram $D \subset T^2$. Then
\label{thm:me_and_lw}
\begin{equation}
\label{eq:lw}
J^T_2(L; q) = (q^{3/4})^{-w(D)} \sum_{s \in \text{Adm}_c(D)} q^{(\text{rot}_1(D,s) - \text{rot}_0(D,s))/2} \cdot \omega^c(s),
\end{equation}
where all terms are defined as in Proposition \ref{thm:lin_wang}.
\end{thm}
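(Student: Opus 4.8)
The plan is to follow Lin and Wang's proof of Proposition~\ref{thm:lin_wang} essentially verbatim (compare also \cite{t88-2}), the only genuinely new ingredient being the passage from $I^2$ to $T^2$, which Corollary~\ref{thm:lift_choice} keeps under control. I would begin from the state-sum definition of $J^T_2$ given in Section~3: fixing a covering $\pi$, the quantity $\hat{J}^T_{{\bf 2},q}(D) = \Phi^{\mathscr{A}_q, R, \mu}(D(2,\dots,2))$ is a sum over states $\sigma$ labelling each component of $D \setminus P$ by a basis vector of $V^2$ on downward arcs and of $(V^2)^*$ on upward arcs, where $P$ consists of \emph{all} critical points---crossings and local extrema---and $\omega(\sigma)$ is the product of the local weights. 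Using the self-duality isomorphism $\varphi\colon (V^2)^* \to V^2$ exactly as in the proof of Theorem~\ref{thm:orientation}, I would first observe that a state of nonzero weight is the same data as a $\{0,1\}$-labelling of the \emph{crossing-arcs} of $D$ (the maximal sub-arcs containing no crossing): an undecorated extremum ($E$ or $N$) forces the label to be locally constant there and contributes the scalar $1$, while a $\mu$-decorated extremum ($E_\mu$ or $N_{\mu^{-1}}$) also forces the label to be locally constant but contributes the eigenvalue of $\mu^{\pm1}$ on that label. Since the nonzero entries of $R$ and $R^{-1}$ on $V^2 \otimes V^2$ are precisely those recorded in Figure~\ref{fig:adm_crossing}, such nonzero-weight labellings are exactly the admissible states $s \in \text{Adm}_c(D)$, with crossing weights the $\omega_p(s)$ of Proposition~\ref{thm:lin_wang}, and I would arrive at
$$
\hat{J}^T_{{\bf 2},q}(D) = \sum_{s \in \text{Adm}_c(D)} \omega^c(s) \prod_{p} \lambda_p(s),
$$
where $p$ runs over the local extrema of $D$ and $\lambda_p(s)$ is the scalar (equal to $1$, or to an eigenvalue of $\mu$ or of $\mu^{-1}$) contributed by the cap or cup operator at $p$ in the state $s$.

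The next step is to identify the product $\prod_p \lambda_p(s)$, for a fixed admissible state $s$, with $q^{(\text{rot}_1(D,s) - \text{rot}_0(D,s))/2}$. The good unit $\mu$ acts on $V^2$ diagonally, with eigenvalue $q^{1/2}$ on the basis vector labelled $1$ and $q^{-1/2}$ on the one labelled $0$ (a direct check of the conventions of Section~7, normalized so that $\text{Tr}(\mu|_{V^2}) = q^{1/2} + q^{-1/2} = [2]$). After resolving the two rightmost crossing types of Figure~\ref{fig:adm_crossing} into pairs of vertical arcs, as in Proposition~\ref{thm:lin_wang}, $D$ decomposes into closed curves, each labelled entirely $0$ or entirely $1$ by $s$. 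Here I would invoke the fact at the heart of the Reshetikhin--Turaev/Turaev graphical calculus (and of \cite{lw01} itself; see also \cite{t94}): traversing such a closed curve $C$ once, the cap and cup operators encountered compose to $\mu^{\text{rot}(C)}$, where $\text{rot}(C)$ is the degree of the Gauss map of $C$. Consequently the $\mu^{\pm1}$-eigenvalues picked up along $C$ multiply to $q^{\text{rot}(C)/2}$ when $C$ carries label $1$ and to $q^{-\text{rot}(C)/2}$ when it carries label $0$; multiplying over all curves of the resolution gives exactly $q^{(\text{rot}_1(D,s) - \text{rot}_0(D,s))/2}$. Combining this with the display above, and applying the writhe normalization of Definition~\ref{def:simple_cjp} (multiplying by $(q^{3/4})^{-w(D)}$, the writhe factor for $n = 2$), would yield (\ref{eq:lw}).

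The step I expect to be the real obstacle---and the only place where $T^2$ genuinely differs from $S^3$---is reconciling the state sum, which a priori depends on $\pi$ through the choice of which points of $D$ are designated local extrema, with the quantities $\text{rot}_i(D,s)$, which are degrees of Gauss maps and thus presuppose a convention for rotation numbers of curves on $T^2$. My plan is to fix that convention via the flat structure of the standard affine covering, and then to appeal to Corollary~\ref{thm:lift_choice}: since $\hat{J}^T_{{\bf 2},q}(D)$ is independent of $\pi$, I may compute it using precisely this covering, for which the designated extrema are the height-function extrema of $D$ in the round torus and the identity of the previous paragraph holds literally. As a built-in consistency check, an embedded essential simple closed curve in $T^2$ is isotopic to a geodesic and hence has rotation number $0$, giving $J^T_2(K) = 2$ as in Proposition~\ref{thm:torus_knots}(b), whereas a contractible simple closed curve has rotation number $\pm1$, giving $J^T_2(K) = q^{1/2} + q^{-1/2} = [2]$ as in Proposition~\ref{thm:torus_knots}(a). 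Once the convention and this identification are in place, the remaining verifications are a routine transcription of \cite{lw01}, and I anticipate no further difficulty; the content lies entirely in making the passage from $S^3$ to $T^2$ precise.
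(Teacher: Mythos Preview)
Your proposal is correct and follows essentially the same route as the paper: identify admissible pseudo-operator states with $\{0,1\}$-labellings of crossing-arcs, then recognize the extremum contributions as powers of $\mu$ that assemble into $q^{(\text{rot}_1-\text{rot}_0)/2}$. The one place the paper proceeds differently is in matching crossing weights: rather than invoking the self-duality isomorphism $\varphi$ to handle crossings with upward strands, the paper first applies a local isotopy (rotating a crossing by adding a cap/cup pair on either side) so that every crossing has both strands oriented downward, after which $\omega_p(\sigma)=\omega_p(s)$ is immediate; this sidesteps tracking $\varphi$ through the $R$-matrix and is slightly cleaner, though your approach via self-duality also works. Your discussion of the dependence on $\pi$ is more cautious than necessary---the paper simply computes in the flat structure without comment---but not wrong.
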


\begin{proof}
We sketch the proof. Let $P$ denote the set of crossing points and local extrema of $D$, as in Section $3$, and use $\sigma$ to denote a state of $D$ in the pseudo-operator invariant context (see (\ref{eq:state_sum_1}) and the preceding discussion). Call a state $\sigma$ \emph{admissible} if $\omega(\sigma) \neq 0$, and let $\text{Adm}(D)$ be the set of admissible states of $D$ in this context.

In the given basis for $V^2$, the operator $\mu : V^2 \to V^2$ is defined by
\begin{equation}
\label{eq:mu}
\mu_0^0 = q^{-1/2}, \ \ \ \ \ \mu_1^1 = q^{1/2},
\end{equation}
and all other coefficients are zero \cite{km91}. Thus, a state $\sigma$ is admissible only if both sides of every extreme point of $D$ are assigned the same number, either $0$ or $1$. (Here $i \in \{0,1\}$ might refer to the basis element $e_i$ or the dual element $e^i$.) It follows that $\text{Adm}(D)$ is in bijection with $\text{Adm}_c(D)$. Furthermore, if $\sigma \in \text{Adm}(D)$, we can perform crossing resolutions like those preceding \ref{thm:lin_wang} to decompose $D$ into a set of closed curves, each of which is labelled entirely by $0$ or entirely by $1$. Therefore it makes sense to write $\text{rot}_i(D, \sigma)$ for an admissible state $\sigma$.

Finally, we may assume all crossings of $D$ have both strands oriented downward---otherwise, we can apply an isotopy as in Figure \ref{fig:cross_twist}. This isotopy does not change the value of (\ref{eq:lw}), since it does not change the diagram or any rotation numbers. With this assumption, if $p \in D$ is a crossing point, $\omega_p(\sigma) = \omega_p(s)$ for any state $\sigma \in \text{Adm}(D)$ with corresponding state $s \in \text{Adm}_c(D)$.

\begin{figure}[H]
\centering
\includegraphics[height=1.8cm]{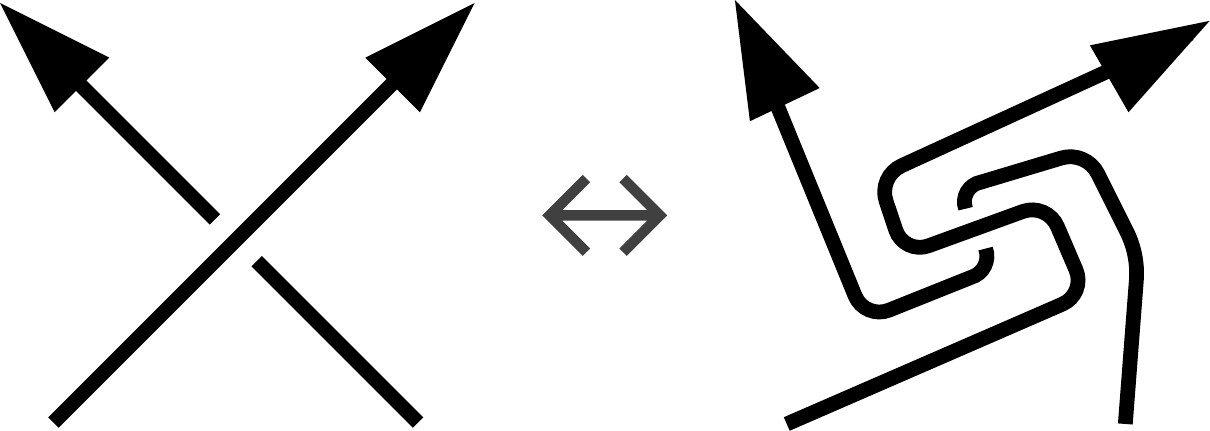}
\caption{An isotopy which reorients a crossing downward}
\label{fig:cross_twist}
\end{figure}

We now compute:
\begin{align*}
J^T_2(L; q) &= (q^{3/4})^{-w(D)} \sum_{\sigma \in \text{Adm}(D)} \prod_{p \in P} \omega_p(\sigma) \\
&= (q^{3/4})^{-w(D)} \sum_{\sigma \in \text{Adm}(D)} \prod_{p \in (P \setminus P^c)} \omega_p(\sigma)\prod_{p \in P^c} \omega_p(\sigma) \\
&= (q^{3/4})^{-w(D)} \sum_{\sigma \in \text{Adm}(D)} (\mu_0^0)^{\text{rot}_0(D,\sigma)}(\mu_1^1)^{\text{rot}_1(D,\sigma)} \omega^c(\sigma) \\
&= (q^{3/4})^{-w(D)} \sum_{s \in \text{Adm}_c(D)} q^{(\text{rot}_1(D,s) - \text{rot}_0(D,s))/2} \cdot \omega^c(s).
\end{align*}
The key observation of the third equality is that $\mu$ counts rotation numbers. Examining Theorem \ref{thm:fund_tangles}, we see that a weight of $\mu^i_i$ is assigned to each left-oriented, $i$-colored cap and a weight of $(\mu^{-1})^i_i = (\mu^i_i)^{-1} $ is assigned to each left-oriented, $i$-colored cup. Thus, if $C$ is a curve of $D$ (after crossing resolution) labeled entirely by $i$, the exponent of the product of the $\mu_i^i$s gives the rotation number of $C$. (See Figure \ref{fig:mu_rotation}.)
\end{proof}

\begin{figure}[H]
\labellist
\small\hair 2pt
\pinlabel $\mu$ at 115 350
\pinlabel $\mu$ at 150 200
\pinlabel $\mu^{-1}$ at 640 40
\endlabellist
\centering
\includegraphics[height=4cm]{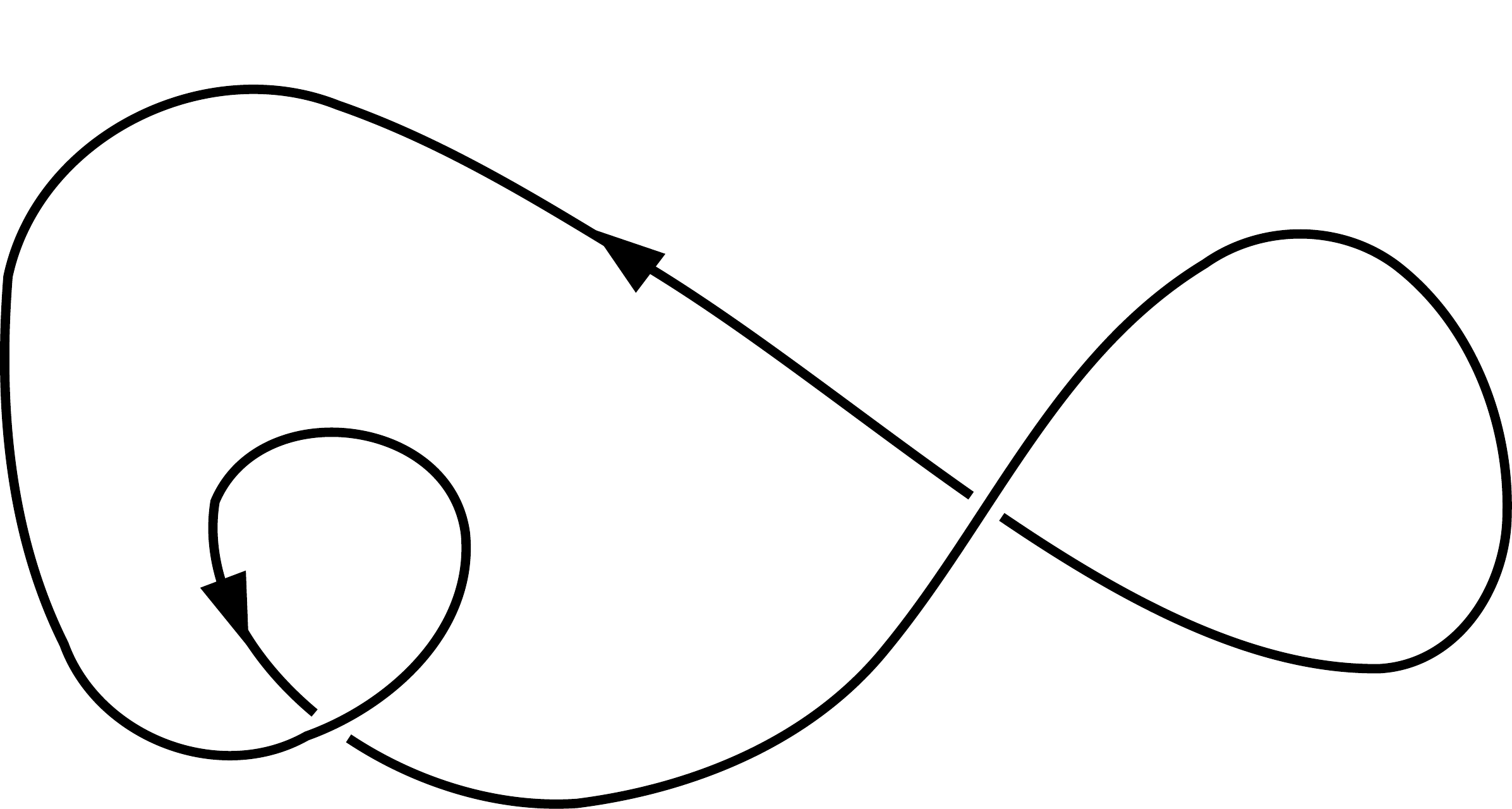}
\caption{The exponent of the product of the $\mu$'s is the rotation number of the curve (in this case 1)}
\label{fig:mu_rotation}
\end{figure}

Having defined $J^T_2$ as in Theorem \ref{thm:me_and_lw}, the higher invariants $J^T_n$, $n > 2$, can be recovered using the Cabling Formula Theorem \ref{thm:cabling}.

As promised, we only needed to normalize the formula in Proposition \ref{thm:lin_wang} to define $J^T_2$ as in Theorem \ref{thm:me_and_lw}. From this perspective $J_n$ and $J^T_n$ become two instances of the same formula, and the definition of the latter is forced by the definition of the former. In other words, from this point of view, there is no other way we could have defined $J^T_n$.

Additionally, (\ref{eq:lw}) provides insight into Proposition \ref{thm:torus_knots}. Let $K \subset T^2 \times I$ be a knot which projects to a simple, closed curve $C \subset T^2$. Then $C$ has no crossings, and only two state assignments as defined in (\ref{eq:lw}). If $C$ is contractible, it has rotation number $\pm1$ and
$$
J^T_2(K;q) = q^{1/2} + q^{-1/2} = [2].
$$
If $C$ is not contractible, it has rotation number $0$ and
$$
J^T_2(K;q) = q^0 + q^0 = 2.
$$
While we cannot fully explain why the toroidal colored Jones polynomial ``quantizes'' contractible curves and not essential ones, this discussion suggests a relationship with the curvature of a link.

\begin{rmk}
The exact $R$-matrix used here is slightly different than the one used in \cite[Sec. 2.3]{lw01}. To recover that matrix from ours, first multiply $R$ by $q^{1/4}$ (and multiply $R^{-1}$ by $q^{-1/4}$), then make the variable substitution $q' = -q^{1/2}$. We also use downward-oriented crossings rather than upward-oriented ones---these two convention changes result in a slightly different formula for $J_2$.
\end{rmk}

\bibliography{volume_conjecture}{}

\providecommand{\bysame}{\leavevmode\hbox to3em{\hrulefill}\thinspace}
\providecommand{\MR}{\relax\ifhmode\unskip\space\fi MR }
% \MRhref is called by the amsart/book/proc definition of \MR.
\providecommand{\MRhref}[2]{%
  \href{http://www.ams.org/mathscinet-getitem?mr=#1}{#2}
}
\providecommand{\href}[2]{#2}
\begin{thebibliography}{10}

\bibitem{a19}
Colin Adams, Or~Eisenberg, Jonah Greenberg, Kabir Kapoor, Zhen Liang, Kate
  O'Connor, Natalia Pacheco-Tallaj, and Yi~Wang, \emph{tg-hyperbolicity of
  virtual links}, J. Knot Theory Ramifications \textbf{28} (2019), no.~12,
  1950080, 26. \MR{4059928}

\bibitem{cks02}
J.~Scott Carter, Seiichi Kamada, and Masahico Saito, \emph{Stable equivalence
  of knots on surfaces and virtual knot cobordisms}, vol.~11, 2002, Knots 2000
  Korea, Vol. 1 (Yongpyong), pp.~311--322. \MR{1905687}

\bibitem{ckp16}
Abhijit Champanerkar, Ilya Kofman, and Jessica~S. Purcell, \emph{Geometrically
  and diagrammatically maximal knots}, J. Lond. Math. Soc. (2) \textbf{94}
  (2016), no.~3, 883--908. \MR{3614933}

\bibitem{ckp19}
\bysame, \emph{Geometry of biperiodic alternating links}, J. Lond. Math. Soc.
  (2) \textbf{99} (2019), no.~3, 807--830. \MR{3977891}

\bibitem{cy18}
Qingtao Chen and Tian Yang, \emph{Volume conjectures for the
  {R}eshetikhin-{T}uraev and the {T}uraev-{V}iro invariants}, Quantum Topol.
  \textbf{9} (2018), no.~3, 419--460. \MR{3827806}

\bibitem{c07}
Francesco Costantino, \emph{{Coloured Jones invariants of links and the Volume
  Conjecture}}, Journal of the London Mathematical Society \textbf{76} (2007),
  no.~1, 1--15.

\bibitem{dky18}
Renaud Detcherry, Efstratia Kalfagianni, and Tian Yang, \emph{Turaev-{V}iro
  invariants, colored {J}ones polynomials, and volume}, Quantum Topol.
  \textbf{9} (2018), no.~4, 775--813. \MR{3874003}

\bibitem{fy89}
Peter~J. Freyd and David~N. Yetter, \emph{Braided compact closed categories
  with applications to low-dimensional topology}, Adv. Math. \textbf{77}
  (1989), no.~2, 156--182. \MR{1020583}

\bibitem{gl11}
Stavros Garoufalidis and Thang T.~Q. L\^{e}, \emph{Asymptotics of the colored
  {J}ones function of a knot}, Geom. Topol. \textbf{15} (2011), no.~4,
  2135--2180. \MR{2860990}

\bibitem{g04}
J.~Green, \emph{A table of virtual knots}, available at {\tt
  https://www.math.toronto.edu/drorbn/Students/GreenJ}, 2004.

\bibitem{gmo07}
Sergei~A. Grishanov, Vadim~R. Meshkov, and Alexander~V. Omelchenko,
  \emph{Kauffman-type polynomial invariants for doubly periodic structures}, J.
  Knot Theory Ramifications \textbf{16} (2007), no.~6, 779--788. \MR{2341317}

\bibitem{k97}
R.~M. Kashaev, \emph{The hyperbolic volume of knots from the quantum
  dilogarithm}, Lett. Math. Phys. \textbf{39} (1997), no.~3, 269--275.
  \MR{1434238}

\bibitem{k95}
Christian Kassel, \emph{Quantum groups}, Graduate Texts in Mathematics, vol.
  155, Springer-Verlag, New York, 1995. \MR{1321145}

\bibitem{k87}
Louis~H. Kauffman, \emph{State models and the {J}ones polynomial}, Topology
  \textbf{26} (1987), no.~3, 395--407. \MR{899057}

\bibitem{k99}
\bysame, \emph{Virtual knot theory}, European J. Combin. \textbf{20} (1999),
  no.~7, 663--690. \MR{1721925}

\bibitem{k15}
\bysame, \emph{Rotational virtual knots and quantum link invariants}, J. Knot
  Theory Ramifications \textbf{24} (2015), no.~13, 1541008, 46. \MR{3434547}

\bibitem{km91}
Robion Kirby and Paul Melvin, \emph{The {$3$}-manifold invariants of {W}itten
  and {R}eshetikhin-{T}uraev for {${\rm sl}(2,{\bf C})$}}, Invent. Math.
  \textbf{105} (1991), no.~3, 473--545. \MR{1117149}

\bibitem{kr89}
A.~N. Kirillov and N.~Yu. Reshetikhin, \emph{Representations of the algebra
  {${U}_q({\rm sl}(2)),\;q$}-orthogonal polynomials and invariants of links},
  Infinite-dimensional {L}ie algebras and groups ({L}uminy-{M}arseille, 1988),
  Adv. Ser. Math. Phys., vol.~7, World Sci. Publ., Teaneck, NJ, 1989,
  pp.~285--339. \MR{1026957}

\bibitem{k11}
Vyacheslav Krushkal, \emph{Graphs, links, and duality on surfaces}, Combin.
  Probab. Comput. \textbf{20} (2011), no.~2, 267--287. \MR{2769192}

\bibitem{k03}
Greg Kuperberg, \emph{What is a virtual link?}, Algebr. Geom. Topol. \textbf{3}
  (2003), 587--591. \MR{1997331}

\bibitem{lw01}
Xiao-Song Lin and Zhenghan Wang, \emph{Random walk on knot diagrams, colored
  {J}ones polynomial and {I}hara-{S}elberg zeta function}, Knots, braids, and
  mapping class groups---papers dedicated to {J}oan {S}. {B}irman ({N}ew
  {Y}ork, 1998), AMS/IP Stud. Adv. Math., vol.~24, Amer. Math. Soc.,
  Providence, RI, 2001, pp.~107--121. \MR{1873112}

\bibitem{m93}
H.~R. Morton, \emph{Invariants of links and {$3$}-manifolds from skein theory
  and from quantum groups}, Topics in knot theory ({E}rzurum, 1992), NATO Adv.
  Sci. Inst. Ser. C Math. Phys. Sci., vol. 399, Kluwer Acad. Publ., Dordrecht,
  1993, pp.~107--155. \MR{1257908}

\bibitem{ms91}
H.~R. Morton and P.~Strickland, \emph{Jones polynomial invariants for knots and
  satellites}, Math. Proc. Cambridge Philos. Soc. \textbf{109} (1991), no.~1,
  83--103. \MR{1075123}

\bibitem{mm01}
Hitoshi Murakami and Jun Murakami, \emph{The colored {J}ones polynomials and
  the simplicial volume of a knot}, Acta Math. \textbf{186} (2001), no.~1,
  85--104. \MR{1828373}

\bibitem{my18}
Hitoshi Murakami and Yoshiyuki Yokota, \emph{Volume conjecture for knots},
  SpringerBriefs in Mathematical Physics, vol.~30, Springer, Singapore, 2018.
  \MR{3837111}

\bibitem{o18}
Tomotada Ohtsuki, \emph{On the asymptotic expansion of the quantum {$\rm
  SU(2)$} invariant at {$q=\exp(4\pi\sqrt{-1}/N)$} for closed hyperbolic
  3-manifolds obtained by integral surgery along the figure-eight knot},
  Algebr. Geom. Topol. \textbf{18} (2018), no.~7, 4187--4274. \MR{3892244}

\bibitem{p91}
J\'{o}zef~H. Przytycki, \emph{Skein modules of {$3$}-manifolds}, Bull. Polish
  Acad. Sci. Math. \textbf{39} (1991), no.~1-2, 91--100. \MR{1194712}

\bibitem{rt90}
N.~Yu. Reshetikhin and V.~G. Turaev, \emph{Ribbon graphs and their invariants
  derived from quantum groups}, Comm. Math. Phys. \textbf{127} (1990), no.~1,
  1--26. \MR{1036112}

\bibitem{s81}
Teruhiko Soma, \emph{The {G}romov invariant of links}, Invent. Math.
  \textbf{64} (1981), no.~3, 445--454. \MR{632984}

\bibitem{t88}
V.~G. Turaev, \emph{The {C}onway and {K}auffman modules of a solid torus}, Zap.
  Nauchn. Sem. Leningrad. Otdel. Mat. Inst. Steklov. (LOMI) \textbf{167}
  (1988), no.~Issled. Topol. 6, 79--89, 190. \MR{964255}

\bibitem{t88-2}
\bysame, \emph{The {Y}ang-{B}axter equation and invariants of links}, Invent.
  Math. \textbf{92} (1988), no.~3, 527--553. \MR{939474}

\bibitem{t94}
\bysame, \emph{Quantum invariants of knots and 3-manifolds}, De Gruyter Studies
  in Mathematics, vol.~18, Walter de Gruyter \& Co., Berlin, 1994. \MR{1292673}

\bibitem{v08}
Roland van~der Veen, \emph{Proof of the volume conjecture for {W}hitehead
  chains}, Acta Math. Vietnam. \textbf{33} (2008), no.~3, 421--431.
  \MR{2501851}

\bibitem{v09}
\bysame, \emph{The volume conjecture for augmented knotted trivalent graphs},
  Algebr. Geom. Topol. \textbf{9} (2009), no.~2, 691--722. \MR{2496886}

\end{thebibliography}
\bibliographystyle{amsplain}
\end{document}